\numberwithin{equation}{section}
\def\beq{\begin{eqnarray}}
\def\eeq{\end{eqnarray}}
\def\beqs{\begin{eqnarray*}}
\def\eeqs{\end{eqnarray*}}
\def\ind{\hbox{\rm ind}}
\def\ord{\hbox{\rm ord}}
\newfont{\df}{eufm10}
\def\ct{{\mathcal T}}
\title[On the index-Conjecture on length four sequences]
{On the index-Conjecture on length four minimal zero-sum sequences}
\thanks{the author's email: xialimeng@ujs.edu.cn}
\thanks{Supported by the  NNSF of China (Grant No. 11001110, 11271131)}
\author[L.-M. Xia]{Li-meng   Xia}
\date{}
\begin{document}
\maketitle
\centerline{Faculty of Science, Jiangsu University}
\centerline{Zhenjiang, 212013, Jiangsu Province, P.R. China}

\def\abstractname{ABSTRACT}
\begin{abstract}

Let $G$ be a finite cyclic group. Every sequence $S$ over $G$ can be written in the form
$S=(n_1g)\cdot...\cdot(n_lg)$ where $g\in G$ and $n_1,\cdots,n_l\in[1,{\hbox{\rm ord}}(g)]$, and the index $\ind(S)$ of $S$ is defined to be the minimum of $(n_1+\cdots+n_l)/\hbox{\rm ord}(g)$ over all possible $g\in G$ such that $\langle g\rangle=G$. A conjecture says that if $G$ is finite such that $\gcd(|G|,6)=1$, then $\ind(S)=1$ for every minimal zero-sum sequence $S$. In this paper, we prove that the conjecture holds if $S$ is reduced and at least one $n_i$ coprime to $|G|$.

\vskip3mm \noindent {\it Key Words}: cyclic group, minimal zero-sum sequence, index of sequences, reduced.

\vskip3mm \noindent {\it 2000 Mathematics Subject Classification:} 11B30, 11B50, 20K01
\end{abstract}

\newtheorem{theo}{Theorem}[section]
\newtheorem{theorem}[theo]{Theorem}
\newtheorem{defi}[theo]{Definition}
\newtheorem{conj}[theo]{Conjecture}
\newtheorem{lemma}[theo]{Lemma}
\newtheorem{coro}[theo]{Corollary}
\newtheorem{proposition}[theo]{Proposition}
\newtheorem{remark}[theo]{Remark}

\setcounter{section}{0}

\section{Introduction}

Throughout the paper, let $G$ be an additively written finite cyclic group of order $|G|=n$. By a sequence over $G$ we mean a finite sequence of terms from $G$ which is unordered and repetition of terms is allowed. We view sequences over $G$ as elements of the free abelian monoid $\mathcal{F}(G)$ and use multiplicative notation. Thus a sequence $S$ of length $|S|=k$ is written in the form $S=(n_1g)\cdot...\cdot(n_kg)$, where $n_1,\cdots,n_k\in\mathbb{N}$ and $g\in G$. We call $S$ a {\it zero-sum sequence} if $\sum_{j=1}^kn_jg=0$. If $S$ is a zero-sum sequence, but no proper nontrivial subsequence of $S$ has sum zero, then $S$ is called a {\it minimal zero-sum sequence}. Recall that the index of a sequence $S$ over $G$ is defined as follows.

\begin{defi}
For a sequence over $G$
\beqs S=(n_1g)\cdot...\cdot(n_kg), &&\hbox{where}\;1\leq n_1,\cdots,n_k\leq n,\eeqs
the index of $S$ is define by $\ind(S)=\min\{\|S\|_g|g\in G \hbox{~with~}\langle g\rangle=G\}$, where
\beqs \|S\|_g=\frac{n_1+\cdots+n_k}{\ord(g)}.\eeqs
\end{defi}
Clearly, $S$ has sum zero if and only if $\ind(S)$ is an integer.

\begin{conj}
 Let $G$ be a finite cyclic group such that $\gcd(|G|,6)=1$. Then every minimal zero-sum sequence $S$ over $G$ of length $|S|=4$ has $\ind(S)=1$.
\end{conj}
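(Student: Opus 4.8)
\medskip
\noindent\textbf{Plan of the proof of the main theorem.}\quad
We sketch the argument proving the conjecture in the case studied here, namely when $S$ is reduced and some coefficient $n_i$ is coprime to $n=|G|$. The idea is to reduce the statement to excluding the value $\ind(S)=2$, and then to turn $\ind(S)=2$ into a system of floor-function identities that $\gcd(n,6)=1$ makes impossible.

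Write $S=(n_1g)\cdots(n_4g)$ with $1\le n_i\le n-1$, so that no term is zero. For a unit $t$ modulo $n$ we have $tn_i\not\equiv0\pmod n$, hence $\langle tn_i\rangle+\langle(n-t)n_i\rangle=n$, where $\langle x\rangle\in[1,n]$ is the residue of $x$; setting $\phi(t)=\frac1n\sum_{i=1}^4\langle tn_i\rangle$ this yields the reciprocity $\phi(t)+\phi(-t)=4$. Since $\{\phi(t):\gcd(t,n)=1\}=\{\|S\|_h:\langle h\rangle=G\}$, we get $\ind(S)=\min_t\phi(t)$, a positive integer; and reciprocity gives $\min(\phi(t),\phi(-t))\le2$, so $\ind(S)\in\{1,2\}$, with $\ind(S)=2$ forcing $\phi(t)=2$ for \emph{every} unit $t$. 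From now on assume $\ind(S)=2$. Using a coefficient coprime to $n$, and the fact that every representation of $S$ is then optimal, we re-choose $g$ so that, after reordering,
\[
S=(g)(ag)(bg)(cg),\qquad 1+a+b+c=2n,\qquad 1\le a\le b\le c\le n-1,
\]
and we aim for a contradiction.

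For $1\le t\le n-1$ we have $\langle tx\rangle=tx-n\lfloor tx/n\rfloor$, so $\phi(t)=2t-\bigl(\lfloor ta/n\rfloor+\lfloor tb/n\rfloor+\lfloor tc/n\rfloor\bigr)$, and the standing hypothesis $\phi(t)=2$ becomes
\[
\lfloor ta/n\rfloor+\lfloor tb/n\rfloor+\lfloor tc/n\rfloor=2(t-1)\qquad(1\le t\le n-1,\ \gcd(t,n)=1),
\]
which we call $(\star)$. As $\gcd(n,6)=1$, the multipliers $t=1,2,3,4,6$ are all units, the pairs $(1,2),(2,3),(3,4)$ consist of units, and none of $a,b,c$ equals $n/2,n/3$ or $2n/3$; subtracting $(\star)$ at $t-1$ from $(\star)$ at $t$ for such a pair shows that exactly two of the increments $\lfloor tx/n\rfloor-\lfloor(t-1)x/n\rfloor\in\{0,1\}$ equal $1$. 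Taking $t=2$ gives $a\le(n-1)/2<(n+1)/2\le b\le c$; taking $t=3$ splits the problem into the case $a<n/3$ (forcing $b,c>2n/3$) and the case $a>n/3$ (forcing $(n+1)/2\le b<2n/3<c$); $t=4$ and $t=6$ refine these further. At each stage one also uses that $S$ is a minimal zero-sum sequence, which for this representation means exactly $a,b,c\ne n-1$, $a+b\notin\{n-1,n\}$ and $a+c\notin\{n-1,n\}$, together with $a+b+c=2n-1$.

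The core of the proof is this finite case analysis, and I expect the main obstacle to be the branches where $b$ or $c$ is forced very close to $n$, say $c=n-\delta$ with $\delta\ge2$ small, because there the cheap multipliers $t\le6$ remain consistent with $(\star)$. In such a branch, writing $\langle tc\rangle=n-\langle t\delta\rangle$, identity $(\star)$ becomes $\langle ta\rangle+\langle tb\rangle-\langle t\delta\rangle=n-t$ for all units $t$, with $a+b-\delta=n-1$; the plan is then to pick a multiplier $t$ of size roughly $n/\delta$ --- coprime to $n$, for which $\gcd(n,6)=1$ leaves plenty of room --- so that $\langle t\delta\rangle$ is forced small while $\langle ta\rangle+\langle tb\rangle$ cannot match, contradicting the identity. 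A few small values of $n$ are handled directly. Once all branches are closed we obtain $\ind(S)\ne2$, hence $\ind(S)=1$.
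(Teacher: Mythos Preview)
Your setup is sound: the reciprocity $\phi(t)+\phi(-t)=4$ does give $\ind(S)\in\{1,2\}$, and under the assumption $\ind(S)=2$ the normalization to $1+a+b+c=2n$ and the floor identity $(\star)$ are both correct and standard. The difficulty is everything after that, and there your sketch has a genuine gap.

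First, you never use the hypothesis that $S$ is \emph{reduced}. In the paper this hypothesis is not cosmetic: it is exactly what forces $n$ to have at most three prime factors (Theorem~2.1 and Lemma~2.2, which analyze what happens when each $(px_1g)\cdots(px_4g)$ fails to be minimal). After that reduction, the cases of one or two prime factors are dispatched by citing earlier work, and only $n=p_1p_2p_3$ with one of the rigid divisibility patterns (A2), (A3), (A4) survives. Your argument, if it worked, would prove the stronger statement ``one $n_i$ coprime to $n$ suffices, reduced or not,'' which is not what is claimed and which your method gives no evidence of reaching.

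Second, the ``finite case analysis'' you announce is neither finite nor carried out. The multipliers $t=2,3,4,6$ partition the range of $(a,b,c)$ into finitely many boxes, but each box is still a two--parameter family with $n$ varying as well. For the hard branches your only tool is the heuristic ``pick $t\approx n/\delta$ coprime to $n$ so that $\langle t\delta\rangle$ is small while $\langle ta\rangle+\langle tb\rangle$ cannot match.'' You give no argument that such a $t$ exists with the required properties; when $n$ has many prime factors, the units near $n/\delta$ can be sparse and this selection can genuinely fail. This is precisely where the reduced hypothesis (hence the bound on the number of primes) is doing work you have not replaced. By contrast, even \emph{after} reducing to $n=p_1p_2p_3$ and to the three patterns (A2)--(A4), the paper still needs Sections~3--5: a bound $s=\lfloor b/a\rfloor\le 9$ obtained through Lemmas~3.5--3.14, a split into Propositions~3.16 and~3.17 according to whether $\lceil n/c\rceil<\lceil n/b\rceil$, and dozens of hand--checked subcases exploiting that only three primes are in play. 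That entire layer is what your plan is skipping.
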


The index of a sequence is a crucial invariant in the investigation of (minimal) zero-sum
sequences (resp. of zero-sum free sequences) over cyclic groups. It was first addressed by
Kleitman-Lemke (in the conjecture [9, page 344]), then used as a key tool by Geroldinger ([6, page736]), and investigated by Gao [3] in a systematical way. Since then it has received a great
deal of attention (see for example [1, 2, 4, 7, 10, 11, 12, 13, 14, 15, 16, 17, 18]). A main focus of the investigation of index is to determine minimal zero-sum sequences of index 1. If $S$ is a minimal zero-sum sequence of length $|S|$ such that $|S|\leq3$ or $|S|\geq\lfloor \frac{n}2\rfloor+2$, then $\ind(S)=1$ (see [1, 14, 16]). In contrast to that, it was shown that for each $k$ with $5\leq k\leq \lfloor \frac{n}2\rfloor+1$, there is a minimal zero-sum subsequence $T$ of length $|T| = k$ with $\ind(T)\geq 2$ ([13, 15]) and that the same is true for $k = 4$ and $\gcd(n, 6)\not= 1$ ([13]). The left case leads to the above conjecture.

In [12], it was prove that Conjecture 1.2 holds true if $n$ is a prime power. In [11], it was prove that Conjecture 1.2 holds for $n=p_1^\alpha\cdot p_2^\beta,(p_1\not=p_2)$ and if the sequence contains an element $g$ of order $\ord(g)=n$. However, the general case is still open.

\begin{defi} Let $S=(n_1g)\cdot...\cdot(n_kg)$ be a minimal zero-sum  sequence over $G$.
Then $S$ is called reduced if $(pn_1g)\cdot...\cdot(pn_kg)$ is not a minimal zero-sum sequence for any prime factor $p$ of $n$.
\end{defi}

In this paper, our main result is stated by the following theorem.

\begin{theo}
Let $G$ be a  finite cyclic group such that $\gcd(|G|,6)=1$, $S=(n_1g)\cdot...\cdot(n_kg)$ be a minimal zero-sum  sequence over $G$ with $\ord(g)=|G|$. If $S$ is reduced and at least one $n_i$ coprime to $n$, then $\ind(S)=1$
\end{theo}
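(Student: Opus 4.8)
\medskip
\noindent\emph{Proof strategy.} The plan is to normalize $S$, isolate the single nontrivial value of $\|S\|_g$, and then derive a contradiction from the assumption $\ind(S)=2$, using $\gcd(n,6)=1$ to supply the units $s=2,3$ and using ``reduced'' to supply exact divisibility relations among the $n_i$. (Throughout, $|S|=4$.) First I would normalize: relabelling so that $n_1$ is coprime to $n$ and replacing $g$ by the generator $n_1g$, we may assume $n_1=1$. Minimality forces $n_i\le n-2$ for $i=2,3,4$ (else $(n_1g)(n_ig)$ is a proper zero-sum subsequence), and $n\ge 5$. Writing $n_1+n_2+n_3+n_4=\sigma n$ with $\sigma\in\{1,2,3\}$: if $\sigma=1$ then $\|S\|_g=1$; if $\sigma=3$ then $\|S\|_{-g}=\frac1n\sum_{i=1}^4(n-n_i)=4-3=1$; so in both cases $\ind(S)=1$, and I may assume $\sigma=2$, i.e. $1+n_2+n_3+n_4=2n$ (whence also $n_i\ge 2$ for $i=2,3,4$).

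Next I would reformulate the conclusion. For a unit $t$, writing $S$ over the generator $tg$ gives $\|S\|_{tg}=\frac1n\sum_{i=1}^4(t^{-1}n_i\bmod n)$, so $\ind(S)=\frac1n\min_{s\in(\ZZ/n\ZZ)^{\times}}\sum_{i=1}^4(sn_i\bmod n)$. Since $\sum_i sn_i\equiv 2sn\equiv 0\pmod n$, each such sum is a positive multiple of $n$, hence lies in $\{n,2n,3n\}$, and $s\mapsto -s$ sends the value $v$ to $4n-v$. Therefore $\ind(S)\ge 2$ would force $\sum_{i=1}^4(sn_i\bmod n)=2n$ for \emph{every} unit $s$, equivalently
\[ \sum_{i=2}^{4}\lfloor sn_i/n\rfloor=2(s-1)\qquad\text{for every unit }s\in[1,n-1]. \]
I would assume this identity holds and work toward a contradiction.

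Since $\gcd(n,6)=1$, the residues $s=2$ and $s=3$ are units: $s=2$ shows (as $n$ is odd) that exactly two of $n_2,n_3,n_4$ exceed $n/2$ — relabel so that $n_3,n_4\ge\frac{n+1}{2}\ge n_2$ — and $s=3$ shows $\lfloor 3n_2/n\rfloor+\lfloor 3n_3/n\rfloor+\lfloor 3n_4/n\rfloor=4$, which together with the previous inequalities confines $(n_2,n_3,n_4)$ to one of finitely many ``interval patterns'' (e.g.\ $n_2<n/3$ and $n_3,n_4\ge 2n/3$, or $n/3\le n_2<n/2<n_3<2n/3\le n_4$). Feeding in further units $s$ narrows these down. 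When $n$ is prime every $s\in[1,n-1]$ is a unit, and the displayed identity then says the three floor-sequences $s\mapsto\lfloor sn_i/n\rfloor$ must jointly jump by exactly $2$ at each step $s\to s+1$; this is impossible when two of the $n_i$ exceed $n/2$ and one lies below, which gives the contradiction in that case (in line with the known prime case).

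For composite $n$ I would bring in reducedness. For each prime $p\mid n$, the sequence $(pn_1g)\cdots(pn_4g)$ is zero-sum, and by the definition of ``reduced'' it is not minimal, so some proper nonempty subsum $\sum_{i\in I}n_i$ is divisible by $n/p$. As $\sum_{i=1}^4 n_i=2n\equiv0\pmod{n/p}$ while $n_1=1\not\equiv0\pmod{n/p}$ (since $n/p>1$), this means $n/p$ divides one of $n_2,n_3,n_4$, $n_2+n_3$, $n_2+n_4$, $n_3+n_4$, $1+n_2$, $1+n_3$, $1+n_4$. Combining such a relation for a suitably chosen $p$ with the interval pattern above forces each $n_i$ to equal $(n/p)$ times an explicit small integer, after which I would exhibit by hand a unit $s$ (depending on $p$) with $\sum_{i=1}^4(sn_i\bmod n)=n$, contradicting the displayed identity and hence giving $\ind(S)=1$. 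The hard part is exactly this last combination: for composite $n$ the ``obvious'' value of $s$ that would break the identity need not be coprime to $n$, so reducedness is genuinely needed to manufacture a legitimate unit, and one must verify that $2\nmid n$ and $3\nmid n$ suffice to carry this out in \emph{every} surviving interval pattern. Organizing that finite but substantial case analysis, and checking that no pattern escapes it, is the main obstacle.
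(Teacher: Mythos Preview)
Your normalization and the reduction to the case $n_1=1$, $\sigma=2$, with the reformulation $\sum_{i=2}^4\lfloor sn_i/n\rfloor=2(s-1)$ for all units $s$, is correct and is essentially how the paper also sets things up (its Proposition~3.1 and Lemma~3.3). The use of $s=2,3$ to get two of the $n_i$ above $n/2$ and one below is also standard and matches the paper's $a,b,c$ parametrization.

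The genuine gap is in what you do next. For composite $n$ you invoke reducedness only to say that for \emph{each} prime $p\mid n$ some proper subsum is divisible by $n/p$, and then assert that this ``forces each $n_i$ to equal $(n/p)$ times an explicit small integer''. That assertion is not justified and is in general false: a relation like $\tfrac{n}{p}\mid (n_2+n_3)$ constrains a \emph{sum}, not the individual terms, and with $n$ having many prime factors you get one such relation per prime with no control on how they interact. Nothing in your outline bounds the number of prime divisors of $n$, so your ``finite but substantial case analysis'' is not shown to be finite at all; as it stands, the number of interval patterns times the number of possible subsum-divisibility configurations grows with $\omega(n)$.

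This is precisely the missing idea. The paper's engine is a structural theorem (its Theorem~2.1 and Lemma~2.2): if $S$ is reduced with $\gcd(x_1,\dots,x_4,n)=1$, then $n$ has at most three prime divisors, and when it has exactly three one lands in one of four explicit patterns (A1)--(A4) for $\{\gcd(x_i,n)\}$, with $n=p_1p_2p_3$ squarefree. The cases $\omega(n)\le 2$ are then delegated to the already-published results for prime powers and for products of two prime powers, and only the squarefree three-prime case remains; that case is handled by a long but genuinely finite analysis (their Proposition~3.1 via Propositions~3.16--3.17), organized around the parameter $s=\lfloor b/a\rfloor$, which they first bound by $s\le 9$. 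Your plan never isolates such a structural constraint, so the endgame you describe cannot get started. (Separately, your one-line argument for the prime case---that the joint jump-by-$2$ condition ``is impossible when two of the $n_i$ exceed $n/2$ and one lies below''---is not a proof: every normalized $\sigma=2$ sequence has that shape, so the impossibility cannot follow from that alone; the prime-power case already requires real work.)
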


It was mentioned in \cite{P} that Conjecture 1.2 was confirmed computationally if $n\leq1000$. Hence, throughout the paper, we always assume that $n>1000$.

\section{Induction on prime decomposition of $n$}

Throughout, let $G$ be a cyclic group of order $|G|=n>1000$. Given real numbers $a,b\in\mathbb{R}$, we use $[a,b]=\{x\in\mathbb{Z}|a\leq x\leq b\}$ to denote the set of integers between $a$ and $b$. For $x\in\mathbb{Z}$, we denote by $|x|_n\in[1,n]$ the integer congruent to $x$ modulo $n$. Suppose that $n$ has a prime decomposition $n=p_1^{\mu_1}\cdots p_d^{\mu_d}$. Let $S=(x_1g)\cdot...\cdot(x_kg)$  be a minimal zero-sum sequence over $G$ such that $\ord(g)=n=|G|$ and $1\leq x_1,x_2,x_3,x_4\leq n-1$. Then $x_1+x_2+x_3+x_4=\nu n$, where $1\leq\nu\leq 3$.

For convenience, we use the following symbols:
\beqs \ct=\{p_1,\cdots, p_d\},&& \ct_i=\{p\in\ct|p=\gcd(p,x_i)\},\;i=1,2,3,4.\eeqs

\begin{theo}
If $S$ is reduced and $\gcd(x_1,x_2,x_3,x_4,n)=1$, then $|\ct|\leq3$. Particularly, if $|\ct|=3$, then after renumbering if necessary one of the following statements holds:

(A1) $\{\gcd(x_i,n)|i=1,2,3,4\}=\{p_1p_2,p_2,p_1p_3,p_3\}$.

(A2) $\{\gcd(x_i,n)|i=1,2,3,4\}=\{1,p_1,p_2,p_1p_2\}$.

(A3) $\gcd(x_i,n)=1$ for $i=1,2,3,4$.

(A4) $\gcd(x_1,n)=1,\gcd(x_2,n)=p_1p_2,\gcd(x_3,n)=p_1p_3,\gcd(x_4,n)=p_2p_3$.

\end{theo}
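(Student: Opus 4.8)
The plan is to translate ``reduced'' into a purely arithmetic condition and then run an organized case analysis on how the four terms $x_1,x_2,x_3,x_4$ distribute among the prime divisors of $n$. First I would record what reducedness means: since $p\sum x_i=p\nu n\equiv 0$, the multiplied sequence $(px_1g)\cdots(px_4g)$ is always zero-sum, so ``$S$ reduced'' says exactly that for every prime $p\mid n$ there is a proper nonempty $T\subsetneq\{1,2,3,4\}$ with $\tfrac np\mid\sum_{i\in T}x_i$. As $\sum_{i\in T}x_i+\sum_{i\notin T}x_i=\nu n\equiv 0\pmod{n/p}$, one may take $|T|\le 2$. I would call $p$ of \emph{type 1} if some singleton $T$ works, of \emph{type 2} otherwise, and write $\ct=\mathcal P_1\sqcup\mathcal P_2$ accordingly; the target is $|\mathcal P_1|+|\mathcal P_2|=|\ct|\le 3$.

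Next I would isolate three structural facts. (a) If $\tfrac np\mid x_i$ then, since $x_i\not\equiv 0$, actually $\gcd(x_i,n)=n/p$; call such an $i$ a $p$-witness and let $W_p$ be the set of $p$-witnesses, so in particular $\gcd(x_i,n)=n/p$ determines $p$ and the sets $W_p$ $(p\in\mathcal P_1)$ are pairwise disjoint. (b) Distinct primes cannot share a valid $T$: a singleton would give $\mathrm{lcm}(n/p,n/p')=n\mid x_i$, impossible; a pair would be a proper zero-sum subsequence of $S$. Hence each type-2 prime is attached to one of the three pair-partitions of $\{1,2,3,4\}$, so $|\mathcal P_2|\le 3$, and likewise each pair-partition is blocked for type-2 use once some (type-1 or type-2) prime $p$ satisfies $\tfrac np\mid x_i+x_j$. (c) $|W_p|\in\{1,2,4\}$ — if three terms are $p$-witnesses the fourth is forced to be one as well — and $|W_p|=4$ forces $\ct=\{p\}$ by the coprimality hypothesis; so I may assume $|W_p|\le 2$, giving $\sum_{p\in\mathcal P_1}|W_p|\le 4$ and $|\mathcal P_1|\le 4$.

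The engine is to read $x_1+x_2+x_3+x_4=\nu n$ modulo the prime powers $p^{v_p(n)}$. For a $p$-witness $i$ write $x_i=a_i\,\tfrac np$ with $a_i\in[1,p-1]$; for any index $j$ belonging to another prime's witness set one has $v_p(x_j)=v_p(n)$, so those terms vanish mod $p^{v_p(n)}$ and the congruence pins the remaining ones. Two contradictions recur: if the congruence forces $p\mid a_i+a_j$ for two $p$-witnesses $i\ne j$, then $a_i+a_j=p$, so $x_i+x_j=n$, contradicting minimality of $S$; and if it forces $v_p(x_j)=v_p(n)$ for all $j$, then $p\mid\gcd(x_1,x_2,x_3,x_4,n)$, contradicting the hypothesis. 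Finally, $\gcd(n,6)=1$ makes every $p\mid n$ odd, hence $2x_i\not\equiv 0\pmod{p^{v_p(n)}}$ whenever $v_p(x_i)=v_p(n)-1$; this is what excludes configurations in which all three pair-partitions would be completed by type-2 primes.

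With these tools in place the proof branches on $|\mathcal P_1|\in\{0,1,2,3,4\}$ and, inside each value, on the multiset $\{\,|W_p|\,\}$. I expect $|\mathcal P_1|=4$ and the patterns $(2,1,1)$, $(2,2)$ to die by the ``$x_i+x_j=n$'' contradiction, and $|\mathcal P_1|=3$ with pattern $(1,1,1)$ to force $\gcd(x_4,n)=n/(p_1p_2p_3)$, hence (by coprimality) $n$ squarefree with $|\ct|=3$ and, after renumbering, one of (A1)--(A4). When few primes are type 1, the $p$-witnesses pin enough of the valuations $v_r(x_i)$ that at most one or two pair-partitions can still be completed, forcing $|\mathcal P_2|$ small and $|\ct|\le 3$; and in each residual case where $|\ct|=3$, the resulting pattern of $\gcd(x_i,n)$ — read off from which indices are witnesses and which partitions are used — is (after renumbering) exactly one of (A1)--(A4), with the four alternatives corresponding to $|\mathcal P_1|=2,1,0,3$ respectively. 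The hard part will be the bookkeeping in these last cases: one must track, for each prime $r$ and index $i$, whether $v_r(x_i)$ equals or exceeds $v_r(n)$ (which turns on whether $r$ divides the cofactors $a_i$), and then combine the resulting valuation inequalities on the six pair-sums $x_i+x_j$ with CRT and with the minimality of $S$ — this is where essentially all of the work lies.
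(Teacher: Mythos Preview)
Your approach is correct but organized quite differently from the paper's. The paper first isolates a key lemma (Lemma~2.2): if $|\mathcal{T}|\ge 3$ and there exists a prime $p\in\mathcal{T}$ with $1\le|px_i|_n\le n-1$ for all $i$ --- in your language, a type-2 prime --- then the failure of minimality of the $p$-multiplied sequence forces a pair relation $|px_1|_n+|px_2|_n=n$, and a four-case analysis on which of the sets $\mathcal{T}_i=\{q\in\mathcal{T}:q\mid x_i\}$ contain $p$ pins down (A1)--(A3). The remainder of the proof, laid out as twelve short cases on the sizes and intersections of the $\mathcal{T}_i$, is simply a search for such a type-2 prime; the only configuration in which none exists turns out to be exactly (A4). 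Your plan instead classifies all primes globally as type~1 or type~2 and branches on $|\mathcal{P}_1|$ together with the multiset $\{|W_p|\}$. This is conceptually cleaner --- the disjointness of the $W_p$ and the injection $\mathcal{P}_2\hookrightarrow\{\text{pair-partitions of }\{1,2,3,4\}\}$ give $|\mathcal{P}_1|\le 4$ and $|\mathcal{P}_2|\le 3$ immediately --- and the correspondence you record between (A1)--(A4) and $|\mathcal{P}_1|=2,1,0,3$ is correct. The price is that your ``hard part'' (the cases $|\mathcal{P}_1|\le 2$) requires tracking $p$-adic valuations of several pair-sums across several primes simultaneously, whereas the paper only ever works with a single prime at a time inside Lemma~2.2 and offloads the combinatorics to the $\mathcal{T}_i$; the paper's bookkeeping is more repetitive but each individual step is lighter.
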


For the proof of this theorem, we need the following lemma.

\begin{lemma}
Suppose that $|\ct|\geq3$, $p\in\ct$ and $1\leq|px_i|_n\leq n-1$ for $i=1,2,3,4$. If for any $q\in\ct$, $(qx_1g)\cdot(qx_2g)\cdot(qx_3g)\cdot(qx_4g)$ is not a minimal zero-sum sequence, then $n=p_1p_2p_3$ and one of (A1),
(A2), (A3) holds.

Particularly, we can assume that $x_1=1, \{\gcd(n,x_2),\gcd(n,x_3),\gcd(n,x_4)\}=\{p_1, p_2, p_1p_2\}$ for (A2), and $x_1=1, p_1p_2|(x_2+1),p_1p_3|(x_3+1),p_2p_3|(x_4+1)$ for (A3).
\end{lemma}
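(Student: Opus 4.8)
The plan is to exploit the hypothesis that none of the four "multiplied" sequences $(qx_1g)\cdots(qx_4g)$, for $q\in\ct$, is minimal zero-sum. Fix $q\in\ct$. Since $S$ is a zero-sum sequence, so is $q\cdot S:=(qx_1g)\cdots(qx_4g)$; by hypothesis it fails to be \emph{minimal}, so some proper nontrivial subsequence sums to zero. A length-one subsequence $(qx_ig)$ sums to zero iff $n\mid qx_i$, i.e.\ iff $p:=n/\gcd(n,x_i)$ divides $q$, which (as $q$ is prime) forces $q=p$ and $\gcd(n,x_i)=n/q$. A length-two subsequence $(qx_ig)(qx_jg)$ sums to zero iff $n\mid q(x_i+x_j)$; since $x_i+x_j\equiv 0$ would contradict minimality of $S$ unless $i,j$ exhaust a zero-sum pair (impossible), we get $n/\gcd(n,x_i+x_j)=q$. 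So for each prime $q\in\ct$, \emph{either} some $\gcd(n,x_i)$ equals $n/q$, \emph{or} some $\gcd(n,x_i+x_j)$ equals $n/q$ (a length-three subsequence summing to zero is equivalent to the complementary length-one one, so nothing new). First I would record these alternatives as a finite dichotomy attached to each $q$.

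Next I would feed in the structural constraints already available. Theorem 2.2 (which we may assume, being stated earlier) gives $|\ct|\le 3$ and, when $|\ct|=3$, pins the multiset $\{\gcd(x_i,n)\}$ to one of (A1)--(A4). The hypothesis $|\ct|\ge 3$ then forces $|\ct|=3$, say $\ct=\{p_1,p_2,p_3\}$, and we are in one of the four cases. The condition $1\le |px_i|_n\le n-1$ for all $i$ says precisely that $n\nmid px_i$, i.e.\ $p\ne n/\gcd(n,x_i)$ for every $i$; applied with $p$ ranging over $\ct$, this is a strong coprimality-type restriction that I would use to eliminate case (A4) and to kill the "length-one" branch of the dichotomy for the relevant primes, thereby forcing the "length-two" branch: for each $q\in\{p_1,p_2,p_3\}$ there is a pair $i,j$ with $n/\gcd(n,x_i+x_j)=q$. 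Combined with $x_1+x_2+x_3+x_4=\nu n$ and the reducedness of $S$, this system of divisibility conditions on the pairwise sums is what cuts the possibilities down. I would run the case analysis: in (A1) and (A2) the gcd pattern already contains factors $p_1p_2$ etc., and matching the pairwise-sum conditions against $\nu n$ should force $n=p_1p_2p_3$ with $\mu_i=1$ and yield exactly the stated normalizations; in the remaining configuration one shows (A3) must hold with the claimed $p_ip_j\mid(x_\ell+1)$ pattern.

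For the "particularly" clauses, once $x_1$ is coprime to $n$ (which holds in (A2) and (A3)) I would normalize by replacing $g$ with $x_1 g$ — legitimate since $\langle x_1 g\rangle=G$ — so that $x_1=1$; the index $\ind(S)$ and minimality are preserved, and the gcd data is unchanged. In case (A2), after this normalization the three remaining gcds are a permutation of $\{p_1,p_2,p_1p_2\}$ by the (A2) pattern, giving the first assertion. In case (A3), all $x_i$ are coprime to $n$; the pairwise-sum conditions from the dichotomy (now with $x_1=1$) read $q\mid(1+x_j)$ for suitable $j$ and pairs of the form $x_i+x_j$, and sorting out which prime attaches to which index — using that the three primes are distinct and that $\gcd(x_i,n)=1$ rules out $x_i\equiv 0$ — produces $p_1p_2\mid(x_2+1)$, $p_1p_3\mid(x_3+1)$, $p_2p_3\mid(x_4+1)$ after renumbering. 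The main obstacle I anticipate is the bookkeeping in the case analysis: making sure every prime $q\in\ct$ is "used up" exactly once by a pairwise sum while respecting $x_1+\cdots+x_4=\nu n$ and the exclusion $n\nmid px_i$, and checking that configurations other than the three listed genuinely collapse (in particular that higher prime powers $\mu_i>1$ and case (A4) are incompatible with all four primes simultaneously dividing distinct pairwise sums). That combinatorial elimination, rather than any single hard inequality, is where the real work sits.
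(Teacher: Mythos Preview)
Your plan has a structural circularity and a misreading of the hypothesis that together make it unworkable as written.

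First, you invoke ``Theorem 2.2 (which we may assume, being stated earlier)'' to get $|\ct|\le 3$ and to place the gcd pattern among (A1)--(A4). But in the paper the lemma you are proving is Lemma~2.2, and it is precisely the tool used \emph{afterwards} to establish Theorem~2.1. You cannot assume the classification $|\ct|\le 3$ or the list (A1)--(A4) here; the whole point of the lemma is to derive, from scratch, that $|\ct|=3$, that each $\mu_i=1$, and that the gcd pattern is one of (A1)--(A3). So the step ``the hypothesis $|\ct|\ge 3$ then forces $|\ct|=3$'' and ``we are in one of the four cases'' is circular.

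Second, the condition $1\le |px_i|_n\le n-1$ is assumed only for \emph{one} fixed $p\in\ct$, not for all primes in $\ct$. Your sentence ``applied with $p$ ranging over $\ct$, this is a strong coprimality-type restriction'' is therefore not justified by the hypothesis, and you lose the mechanism you rely on to kill the length-one branch and to rule out (A4).

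What the paper actually does is quite different in shape. It uses the single distinguished $p$ to force a $2+2$ pairing: since $p\cdot S$ is zero-sum, not minimal, and has all four terms nonzero (by the $|px_i|_n\ne 0$ hypothesis), one may assume $|px_1|_n+|px_2|_n=n$ and $|px_3|_n+|px_4|_n=n$. This pairing imposes divisibility relations among the $\gcd(x_i,n)$. The proof then splits into four cases according to which of the sets $\ct_i=\{q\in\ct:q\mid x_i\}$ the prime $p$ belongs to. In each case one argues directly: for most configurations one exhibits another prime $q\in\ct$ for which $q\cdot S$ \emph{is} minimal zero-sum, contradicting the hypothesis; the surviving configurations are exactly (A1), (A2), (A3), and along the way one checks $\mu_1=\mu_2=\mu_3=1$ so that $n=p_1p_2p_3$. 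The ``particularly'' clauses then follow by the normalization you describe. If you want to salvage your outline, drop the appeal to Theorem~2.1 entirely and instead extract the $2+2$ pairing from the fixed $p$; that pairing is the organizing datum around which the genuine case analysis runs.
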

\begin{proof}

Since  $(px_1g)\cdot(px_2g)\cdot(px_3g)\cdot(px_4g)$ is not a minimal zero-sum sequence, without loss of generality, we can assume that $|px_1|_n+|px_2|_n=n$ and $|px_3|_n+|px_4|_n=n$. We distinguish four cases.

{\bf Case 1.} $p\in\ct_1\cap\ct_2$.

For any $q\in\ct_3$, $(qx_1g)\cdot(qx_2g)\cdot(qx_3g)\cdot(qx_4g)$ is a minimal zero-sum sequence, hence $\ct_3=\emptyset$.

If $|\ct_1|>2$, then there is $q\in\ct_1$ such that $(qx_1g)\cdot(qx_2g)\cdot(qx_3g)\cdot(qx_4g)$ is a minimal zero-sum sequence, contradiction..

If $|\ct_1|=2<|\ct|$, then there is $q\in\ct_1$ such that $(qx_1g)\cdot(qx_2g)\cdot(qx_3g)\cdot(qx_4g)$ is a minimal zero-sum sequence, contradiction.

If $|\ct_1|=1, |\ct|>2$, then there is $q\in\ct$ such that $(qx_1g)\cdot(qx_2g)\cdot(qx_3g)\cdot(qx_4g)$ is a minimal zero-sum sequence, contradiction.

{\bf Case 2.} $p\in\ct_1\cap\ct_3$.

We must have $\gcd(x_2,n)|\gcd(x_1,n)$ and $\gcd(x_4,n)|\gcd(x_3,n)$. If $|\ct|\geq|\ct_2\cup\ct_3|+2$, then for $q\in\ct\setminus(\ct_2\cup\ct_3)$, $(qx_1g)\cdot(qx_2g)\cdot(qx_3g)\cdot(qx_4g)$ is a minimal zero-sum sequence. Since $|\ct|\geq3$, we have $\ct_2\cup\ct_4\not=\emptyset$.

If $|\ct_2\cup\ct_4|\geq3$, then there is $q\in\ct_2\cup\ct_4$ such that $(qx_1g)\cdot(qx_2g)\cdot(qx_3g)\cdot(qx_4g)$ is a minimal zero-sum sequence.

If $|\ct|=|\ct_2\cup\ct_3|+1$ and $|\ct_2\cup\ct_4|=2$, then for $q\in\ct_2\cup\ct_4$, $(qx_1g)\cdot(qx_2g)\cdot(qx_3g)\cdot(qx_4g)$ is a minimal zero-sum sequence.

If $|\ct|=|\ct_2\cup\ct_3|$ and $|\ct_2|=2$, then there is $q\in\ct_2$ such that $(qx_1g)\cdot(qx_2g)\cdot(qx_3g)\cdot(qx_4g)$ is a minimal zero-sum sequence.

If $|\ct|=|\ct_2\cup\ct_3|$ and $|\ct_2|=|\ct_4|=1$, then we can assume that $\ct_1=\{p_1,p_2\}, \ct_2 =\{p_2\}, \ct_3=\{p_1,p_3\}, \ct_4=\{p_3\}$.  $|p_1x_1|_n+|p_1x_2|_n=n$ implies that $\mu_1=1$.
Since $(p_2x_1g)\cdot(p_2x_2g)\cdot(p_2x_3g)\cdot(p_2x_4g)$ is not a minimal zero-sum sequence, we can get $\mu_2=1$. Similarly, $\mu_3=1$.

Besides all of above, we can assume $\ct=\{p_1,p_2,p_3\}, \ct_1=\{p_1,p_2\},\ct_2=\{p_1\}, \ct_3=\{p_2\}, \ct_4=\emptyset$. Moreover, $p_2^{\mu_2}|(p_2x_1+p_2x_2)$ implies $\mu_2=1$. Similarly, $p_1=1$. If $\mu_3>1$, then  it is easy to check that $(p_3x_1g)\cdot(p_3x_2g)\cdot(p_3x_3g)\cdot(p_3x_4g)$ is a minimal zero-sum sequence, contradiction. Hence $\mu_3=1$ and $n=p_1p_2p_3$.

{\bf Case 3.} $p\in\ct_1, p\not\in\cap_{i=2}^4\ct_i$.

We must have $\gcd(x_2,n)|\gcd(x_1,n)$ and $\gcd(x_3,n)=\gcd(x_4,n)$.

If $\ct\not=\ct_1\cup\ct_3$ or $|\ct_3|\geq2$, then for any $q\in\ct_3$, $(qx_1g)\cdot(qx_2g)\cdot(qx_3g)\cdot(qx_4g)$ is a minimal zero-sum sequence.

Let $\ct=\ct_1\cup\ct_3$ If $|\ct_3|=1$. For any $q\in\ct_2$, $(qx_1g)\cdot(qx_2g)\cdot(qx_3g)\cdot(qx_4g)$ is a minimal zero-sum sequence.

If $|\ct_2|\geq2$, then there is $q\in\ct_2$, such that $(qx_1g)\cdot(qx_2g)\cdot(qx_3g)\cdot(qx_4g)$ is a minimal zero-sum sequence.

{\bf Case 4.} $p\not\in\cup_{i=1}^4\ct_i$.

We must have $\gcd(x_1,n)=\gcd(x_2,n)$ and $\gcd(x_3,n)=\gcd(x_4,n)$. For any $q\in\ct_1$, it holds that $1\leq|qx_i|_n\leq n-1$ for $i=1,2,3,4$. If $\ct_3$ is not empty, then $|qx_1|_n+|qx_3|_n=n$ or  $|qx_2|_n+|qx_3|_n=n$. However, there is $q'\in\ct_3$ such that $q'\nmid qx_1,q'\nmid qx_2, q'\mid qx_3$, it is a contradiction. Repeat some similar discussions, we infer that  $|\ct_1|+|\ct_3|\leq1$.

If $\ct_1=\{q\}$, then there is $p'\in\ct\setminus\ct_1$. Clearly, $1\leq|p'x_i|_n\leq n-1$ for $i=1,2,3,4$. Then $|p'x_1|_n+|p'x_3|_n=n$ or  $|p'x_1|_n+|p'x_4|_n=n$. However, we have $q\mid p'x_1, q\nmid p'x_3, q\nmid p'x_4$, it is a contradiction.

If $\ct_1=\ct_3=\emptyset$. Then there exist $p_1,p_2,p_3\in\ct$ such that
\beqs \gcd(x_1+x_2,n)=\frac{n}{p_1}=\gcd(x_3+x_4,n),\\
\gcd(x_1+x_3,n)=\frac{n}{p_2}=\gcd(x_2+x_4,n),\\
\gcd(x_1+x_4,n)=\frac{n}{p_3}=\gcd(x_2+x_3,n).\eeqs

For any $q\in\ct\setminus\{p_1,p_2,p_3\}$, $(qx_1g)\cdot(qx_2g)\cdot(qx_3g)\cdot(qx_4g)$ must be a minimal zero-sum sequence, hence $\ct=\{p_1,p_2,p_3\}$.

If $\mu_1>1$, then $p_1|(x_1+x_2), p_1|(x_1+x_3), p_1|(x_2+x_3)$, we infer that $p_1|\gcd(x_1,x_2,x_3)$, contradiction. So $\mu_1=1$. Similarly, $\mu_2=\mu_3=1$.
\end{proof}

Now we can finish the proof of Theorem 2.1 via the following discussion:

(1) If $\cup_{i=1}^4\ct_i\not=\ct$, then for any $p\in \cup_{i=1}^4\ct_i$, we have
$1\leq|px_i|_n\leq n-1$ for $i=1,2,3,4$.

(2) If $\cup_{i=1}^4\ct_i$ is empty and $d\geq 2$, then for any $p\in \ct$, we have
$1\leq|px_i|_n\leq n-1$  for $i=1,2,3,4$.

For the above two cases, by Lemma 2.2, we can assume that $\cup_{i=1}^4\ct_i=\ct$ and $d\geq 3$. Without lace of generality, we let $x_1,x_2,x_3,x_4$ be such that
$|\ct_1|\leq|\ct_2|\leq|\ct_3|\leq|\ct_4|$.

(3) If $|\ct_3|\leq\frac{d}{2}$ and $\ct_4<d$, then for any $p\in \ct_4$, we have
$1\leq|px_i|_n\leq n-1$  for $i=1,2,3,4$.

(4) If $|\ct_3|\leq\frac{d}{2}$ and $\ct_4=d$. There must be an index $1\leq k\leq d$ such that $p_k^{\mu_k}\nmid x_4$. Then for any $j\not=k$, we have
$|p_jx_i|_n\leq n-1$  for $i=1,2,3,4$.

Now we can assume that $|\ct_3|>\frac{d}{2}$, then $\ct_3\cap\ct_4$ is nonempty(since $|\ct_3|+|\ct_4|\geq2|\ct_3|>d=|\ct|$).

(5) If $|\ct_3\cap\ct_4|\geq3$, there is $p\in\ct_3\cap\ct_4$ such that $1\leq|px_i|_n\leq n-1$  for $i=1,2,3,4$.

Clearly, there is $p\in\ct_3\cap\ct_4$ such that $1\leq|px_3|_n\leq n-1, 1\leq|px_4|_n\leq n-1$. If $n|px_2$, then for any $q(\not=p)\in\ct_3\cap\ct_4$, we have $q|x_2$, and then $q|x_1$, which contradicts to $\gcd(x_1,x_2,x_3,x_4,n)=1$. Hence $1\leq|px_2|_n\leq n-1$. Similarly, $1\leq|px_1|_n\leq n-1$.

(6) If $|\ct_3\cap\ct_4|=2$ and there is $p_k\in \ct_3\cap\ct_4$ such that $\mu_k\geq2$, $p_k^{\mu_k}\mid\gcd(x_3,x_4)$, then we have $1\leq|p_kx_i|_n\leq n-1$  for $i=1,2,3,4$.

It can be shown by similar argument in (5).

(7) If $\ct_3\cap\ct_4=\{p_k,p_l\}$ and $p_k^{\mu_k}\nmid x_3$, $p_l^{\mu_l}\nmid x_4$, then for any $j\not=k,l$, we have $1\leq|p_jx_i|_n\leq n-1$  for $i=1,2,3,4$.

(8) If $\ct_3\cap\ct_4=\{p_k,p_l\}$ and $p_k^{\mu_k}\nmid x_3$, $p_l^{\mu_l}\nmid x_3$, $p_k^{\mu_k}\mid x_4$, $p_l^{\mu_l}\mid x_4$, then we have $1\leq|p_kx_i|_n\leq n-1$  for $i=1,2,3,4$.

From (5),(6),(7),(8), we can assume that $|\ct_3\cap\ct_4|=1$, then $d$ is odd, otherwise $|\ct_4|\geq|\ct_3|\geq\frac{d}{2}+1$, it implies that $|\ct_3\cap\ct_4|\geq|\ct_3|+|\ct_4|-d\geq2$, contradiction. Hence $|\ct_3|=|\ct_4|=\frac{d+1}{2}$. Without lack of generality, we let $\ct_3\cap\ct_4=\{p_d\}$.

(9) If $d>3$, then $|\ct_1|\leq|\ct_2|\leq|\ct_3|\leq|\ct_4|=\frac{d+1}{2}\leq d-2$, then $1\leq|p_dx_i|_n\leq n-1$  for $i=1,2,3,4$.

(10) If $d=3$ and $\mu_d>1$, then $1\leq|p_dx_i|_n\leq n-1$  for $i=1,2,3,4$.

(11) If $d=3$ and $|\ct_2|\leq1$, then $1\leq|p_dx_i|_n\leq n-1$  for $i=1,2,3,4$.

(12) If $d=3$ and $|\ct_2|=2$, then $\ct_1$ is empty.

From all discussion above, we can assume that

$\ct_1=\emptyset, \ct_2=\{p_1,p_2\}, \ct_3=\{p_1,p_3\}, \ct_4=\{p_2,p_3\}$ and $\mu_3=1$. Since $|\ct_2|=|\ct_3|=|\ct_4|=2$, replace the positions of $x_2,x_3$ and repeat case (10), we can have  $\mu_2=1$. Similarly, $\mu_1=1$. Hence we have $\gcd(x_1,n)=1,\gcd(x_2,n)=p_1p_2,\gcd(x_3,n)=p_1p_3,\gcd(x_4,n)=p_2p_3.$  Up to now, Theorem 2.1 is proved.

If $S$ contains at least one $x_i$ coprime to $n$, then $u=\gcd(x_1,x_2,x_3,x_4,n)=1$. For $|\ct|<3$, Theorem 1.4 is proved by the results in [11] and [12]. Hence, in order to prove Theorem 1.4, it is sufficient to show the following Theorem 2.3:
\begin{theo}
Let $n=p_1p_2p_3$, where $p_1,p_2,p_3$ are three different primes, and $\gcd(n,6)=1$. Let $S=(x_1g)\cdot(x_2g)\cdot(x_3g)\cdot(x_4g)$ be a minimal zero-sum sequence over $G=\langle g\rangle$ such that $\ord(g)=n$, where
$(x_1,x_2,x_3,x_4)$ satisfies one of $(A2), (A3)$ and $(A4)$.
Then $\ind(S)=1$.
\end{theo}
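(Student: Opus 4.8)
The plan is to handle the three cases $(A2)$, $(A3)$, $(A4)$ separately, using the normalizations already extracted in Lemma 2.2 and in the proof of Theorem 2.1, and in each case to exhibit an explicit generator $h$ of $G$ for which $\|S\|_h = 1$. Since $\ord(g)=n=p_1p_2p_3$ with $p_1,p_2,p_3$ distinct primes coprime to $6$, a generator is any $h=tg$ with $\gcd(t,n)=1$; replacing $g$ by $tg$ replaces each $x_i$ by $|t'x_i|_n$ where $t't\equiv 1$, so the whole problem is to find one residue $a$ coprime to $n$ with $|ax_1|_n + |ax_2|_n + |ax_3|_n + |ax_4|_n = n$. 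Throughout I would write $y_i = |ax_i|_n$ and track the sum $y_1+y_2+y_3+y_4$, which is automatically a multiple of $n$ (the sequence stays zero-sum) and lies in $[4, 4n-4]$, hence equals $n$, $2n$ or $3n$; the goal is to force it to $n$ by a good choice of $a$, or equivalently to rule out $2n$ and $3n$.

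For case $(A3)$, where $\gcd(x_i,n)=1$ for all $i$ and by Lemma 2.2 we may take $x_1=1$, $p_1p_2\mid x_2+1$, $p_1p_3\mid x_3+1$, $p_2p_3\mid x_4+1$: here I would use the Chinese Remainder Theorem to choose $a$ so that modulo each $p_j$ the four residues $ax_i$ are pushed into a prescribed small window. The divisibility conditions say $x_2\equiv -1\pmod{p_1p_2}$, etc., so modulo $p_1$ we have $x_1\equiv x_2\equiv x_3\equiv 1$ and $x_4$ is arbitrary (nonzero), modulo $p_2$ we have $x_1\equiv x_2\equiv x_4\equiv 1$ and $x_3$ arbitrary, modulo $p_3$ we have $x_1\equiv x_3\equiv x_4\equiv 1$ and $x_2$ arbitrary. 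I would exploit that the ``odd one out'' is different for each prime to choose $a$ prime-by-prime so that, after scaling, three of the $y_i$ are small and the fourth absorbs the deficit, and then check the single linear relation $\sum y_i = n$. Because $\gcd(n,6)=1$ there is enough room (each $p_j\ge 5$) to make the packing argument go through; this flexibility is exactly where the hypothesis $\gcd(|G|,6)=1$ is used, so I would be careful to record which inequalities need $p_j\ge 5$ rather than merely $p_j\ge 2$.

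For case $(A4)$, with $\gcd(x_1,n)=1$, $\gcd(x_2,n)=p_1p_2$, $\gcd(x_3,n)=p_1p_3$, $\gcd(x_4,n)=p_2p_3$, the three large terms are very rigid: $x_2$ is a nonzero multiple of $p_1p_2$ hence $x_2\in\{p_1p_2, 2p_1p_2,\dots,(p_3-1)p_1p_2\}$, and similarly for $x_3, x_4$. Writing $x_2 = p_1p_2 b_3$, $x_3 = p_1p_3 b_2$, $x_4 = p_2p_3 b_1$ with $b_j\in[1,p_j-1]$, the zero-sum condition becomes $x_1 \equiv -p_1p_2b_3 - p_1p_3b_2 - p_2p_3b_1 \pmod n$, and minimality (no proper zero-sum subsequence) forbids several partial cancellations, in particular $p_1p_2b_3 + p_1p_3b_2\not\equiv 0$, etc. I would again apply CRT: modulo $p_1$ only $x_4$ survives, modulo $p_2$ only $x_3$ survives, modulo $p_3$ only $x_2$ survives, and $x_1$ survives everywhere. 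So choosing $a$ with $ab_1\equiv 1\pmod{p_1}$, $ab_2\equiv 1\pmod{p_2}$, $ab_3\equiv 1\pmod{p_3}$ and $ax_1$ prescribed mod each prime simultaneously (consistent since the three congruences for $a$ are independent) should normalize $S$ to the ``standard'' index-one sequence $(p_2p_3 g')\cdot(p_1p_3 g')\cdot(p_1p_2 g')\cdot(x_1' g')$ with $p_2p_3 + p_1p_3 + p_1p_2 + x_1' = n$; one then checks $x_1' = n - p_1p_2 - p_1p_3 - p_2p_3 > 0$, which holds because $n>1000$ and the product dominates. Case $(A2)$ is intermediate and I would treat it by the same CRT normalization, reducing to $x_1=1$ and $\{\gcd(n,x_i)\} = \{1,p_1,p_2,p_1p_2\}$, then solving a two-variable packing problem modulo $p_3$ (where all four $x_i$ are units) while the behaviour modulo $p_1$ and $p_2$ is already pinned down.

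The main obstacle I anticipate is case $(A3)$: unlike $(A4)$ and $(A2)$, none of the terms is a priori ``large,'' so one cannot simply read off the generator from divisibility, and the choice of $a$ modulo $p_1, p_2, p_3$ must be coordinated with the single global equation $\sum |ax_i|_n \in\{n,2n,3n\}$ to eliminate the values $2n$ and $3n$. Concretely, after CRT I expect to be left with a finite case analysis on how the ``free'' residues $x_4 \bmod p_1$, $x_3\bmod p_2$, $x_2\bmod p_3$ distribute, and I would need a counting or averaging argument over the choices of $a$ (there are $\varphi(n)$ of them) showing that at least one gives sum exactly $n$; the inequalities making this work are precisely the ones that fail for $p_j\in\{2,3\}$, which is consistent with the known counterexamples when $\gcd(n,6)\neq 1$. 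I would organize this as a sequence of sub-lemmas: first reduce each $(Ai)$ to a normal form, then in that normal form produce the generator, deferring the delicate counting to the end of the $(A3)$ argument.
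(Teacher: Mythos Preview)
Your approach via CRT normalization is genuinely different from the paper's, and for case $(A4)$ it is both correct and cleaner than what the paper does. With $x_2=p_1p_2b_3$, $x_3=p_1p_3b_2$, $x_4=p_2p_3b_1$ and $a$ determined by $ab_j\equiv 1\pmod{p_j}$ for $j=1,2,3$ (note that these three congruences already fix $a$ uniquely modulo $n$, so you have no further freedom to ``prescribe $ax_1$''---but none is needed), one gets $|ax_2|_n=p_1p_2$, $|ax_3|_n=p_1p_3$, $|ax_4|_n=p_2p_3$; since $\tfrac{1}{p_1}+\tfrac{1}{p_2}+\tfrac{1}{p_3}<1$ for distinct primes $\ge 5$, the three terms sum to less than $n$, forcing $\sum_i|ax_i|_n=n$.

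However, for $(A2)$ and $(A3)$ there is a genuine gap. A small slip first: in $(A3)$ the divisibilities give $x_2\equiv x_3\equiv -1\pmod{p_1}$, not $+1$. More importantly, the rigidity that makes $(A4)$ work is absent. In $(A4)$ each of $x_2,x_3,x_4$ has a single free residue modulo one prime, so three CRT conditions on $a$ pin down all three representatives. In $(A2)$ the term with $\gcd=p_1$ is free modulo both $p_2$ and $p_3$, and in $(A3)$ all four terms are units, so fixing $a$ modulo each prime still does not control the global values $|ax_i|_n\in[1,n-1]$; for a fixed CRT class of $a$ the sum can remain $2n$. Your fallback ``counting or averaging over the $\varphi(n)$ choices of $a$'' is not an argument as stated: the average of $\sum_i|ax_i|_n$ over units $a$ is essentially $2n$, not $n$, so averaging yields nothing, and there is no evident way to count bad multipliers without exactly the kind of detailed inequalities you are trying to avoid.

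The paper does not separate the three cases in this way. It normalizes uniformly to $x_1=1$, sets $c=x_2$, $b=n-x_3$, $a=n-x_4$ with $1+c=a+b$ (Proposition~3.1), and then searches for a multiplier $m$ coprime to $n$ lying in an interval $[\tfrac{kn}{c},\tfrac{kn}{b}]$ with $ma<n$ (Lemmas~3.3--3.4). The substance of the proof is a long case analysis: first bounding $s=\lfloor b/a\rfloor\le 9$ via combinatorics of short intervals avoiding all three prime factors (Lemmas~3.5--3.14), then splitting on whether $\lceil n/c\rceil<\lceil n/b\rceil$ and on the auxiliary parameter $k_1$ (Propositions~3.16--3.17), each branch producing an explicit $m$. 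The labels $(A2)$/$(A3)$/$(A4)$ enter only at scattered endpoints where a specific small $n$ must be excluded by hand. To complete your programme you would need either a real argument for $(A2)$ and $(A3)$ replacing the averaging placeholder, or to adopt the paper's interval machinery for those two cases, keeping your $(A4)$ shortcut.
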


Notice that, under each assumption of $(A2),(A3)$ and $(A4)$, we always assume that $(px_1g)\cdot(px_2g)\cdot(px_3g)\cdot(px_4g)$ is not a minimal zero-sum sequence for any $p\in\ct$.

\section{Preliminaries for Theorem 2.3}

Let $S$ be the sequence as described in Theorem 2.3.  Similar to Remark 2.1 of [11], we may always assume that
$x_1=1, 1+x_2+x_3+x_4=2n$ and $1<x_2<\frac{n}{2}<x_3\leq x_4<n-1$. Let $c=x_2,b=n-x_3,a=n-x_4$, then it is easy to show that the following proposition implies Theorem 2.3 under assumption $(A2),(A3)$ or $(A4)$.

\begin{proposition}
Let $n=p_1p_2p_3$, where $p_1,p_2,p_3$ are three different primes, and $\gcd(n,6)=1$. Let $S=(g)\cdot(cg)\cdot((n-b)g)\cdot((n-a)g)$ be a minimal zero-sum sequence over $G$ such that $\ord(g)=n$, where $1+c=a+b$, and

$(A2)$  $\{\gcd(c,n),\gcd(b,n), \gcd(a,n)\}=\{p_1, p_2, p_1p_2\}$.

$(A3)$  $\gcd(c+1,n)=p_1p_2$, $\gcd(b-1,n)=p_1p_3$, $\gcd(a-1,n)=p_2p_3$.

$(A4)$  $\gcd(c,n)=p_1p_2$, $\gcd(b,n)=p_1p_3$, $\gcd(a,n)=p_2p_3$.

{\noindent}Then $\ind(S)=1$.
\end{proposition}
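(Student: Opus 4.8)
The plan is to reduce the statement to a concrete Diophantine analysis of the single equation $1+c=a+b$ under the divisibility constraints, and then to exhibit in each case $(A2),(A3),(A4)$ an explicit generator witnessing index $1$. First I would set up the arithmetic picture: writing the sequence as $(g)\cdot(cg)\cdot((n-b)g)\cdot((n-a)g)$ with $1+c=a+b$ and $1<c<\frac n2<n-b\le n-a<n-1$, the sequence is automatically zero-sum, so the content is minimality together with the index claim. For each generator $tg$ with $\gcd(t,n)=1$ we must understand $|t|_n+|tc|_n+|t(n-b)|_n+|t(n-a)|_n$; this is $\nu(t)\,n$ for some $\nu(t)\in\{1,2,3\}$ and we want to find $t$ with $\nu(t)=1$. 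The key elementary observation is that, because the four coefficients sum to $2n$ for $t=1$, the failure of the ``$(px_1g)\cdots(px_4g)$ is not minimal'' hypothesis (which is imposed for every $p\in\ct$) forces, via the usual averaging/splitting argument as in Lemma 2.2, that for each prime $p\mid n$ the multiset $\{|pc|_n,|p(n-b)|_n,|p(n-a)|_n,p\}$ splits into two pairs each summing to $n$. I would translate each of $(A2),(A3),(A4)$ into the precise residues of $a,b,c$ modulo $p_1,p_2,p_3$ that this splitting dictates.

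Next I would treat the three cases separately but in parallel. In case $(A4)$, $\gcd(c,n)=p_1p_2$, $\gcd(b,n)=p_1p_3$, $\gcd(a,n)=p_2p_3$; combined with $1+c=a+b$ this pins down $a,b,c$ modulo each $p_i$ up to the residue of $1$, and one checks that $a\equiv0,b\equiv1,c\equiv0\pmod{p_1}$ forces $a+b$ and $c$ to be ``balanced'' in a way that lets one solve for a generator $t$ with $t\equiv \pm1$ appropriately modulo each $p_i$ by CRT; the resulting $t$ makes all four terms small, giving $\nu(t)=1$. Case $(A2)$ is handled the same way, now using $\{\gcd(c,n),\gcd(b,n),\gcd(a,n)\}=\{p_1,p_2,p_1p_2\}$ so exactly one of $a,b,c$ is coprime to $p_1$, exactly one to $p_2$, etc. Case $(A3)$ is slightly different because it is $c+1,b-1,a-1$ that carry the divisibility; here I would substitute $c'=c+1$, $b'=b-1$, $a'=a-1$ (so $a'+b'=c'$) and run the same CRT construction on the shifted variables, then translate back. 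In every case the generator I expect to use is essentially $t\equiv -1$ modulo the prime dividing the relevant coefficient and $t\equiv 1$ modulo the complementary primes, chosen so that multiplication by $t$ turns $(n-a),(n-b)$ into small residues while keeping $1,c$ small as well.

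The main obstacle, and where the real work lies, is \emph{minimality}: I must show that under each of $(A2),(A3),(A4)$ the sequence $S$ actually is a minimal zero-sum sequence and not merely zero-sum — i.e. that no proper subsum vanishes — and simultaneously that the candidate generator $t$ really achieves $\nu(t)=1$ rather than $2$. Ruling out the two-term subsums is easy ($1+c\ne n$ since $c<\tfrac n2$, and the pairing constraints prevent the others), but a three-term subsum vanishing is exactly the ``$S$ not minimal'' scenario and must be excluded using the precise residue data and the hypothesis $n>1000$ (so that the primes are not too small and the size inequalities $1<c<\tfrac n2<n-b\le n-a<n-1$ have room). I anticipate a short case check: a vanishing three-term subsum would give a linear relation among $a,b,c$ with small coefficients, which combined with $1+c=a+b$ and the divisibility constraints forces one of $p_1,p_2,p_3$ to divide $\gcd$ of two of the $x_i$ — and then the fourth — contradicting either $\gcd(x_1,x_2,x_3,x_4,n)=1$ or the reducedness of $S$. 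Once minimality is secured, verifying $\|S\|_{tg}=1$ for the exhibited $t$ is a bounded computation in each case. I would organize the final write-up as three lemmas (one per case) computing the residues, followed by the uniform CRT construction of $t$ and the uniform minimality check.
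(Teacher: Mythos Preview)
Your proposal has a genuine gap at its central step, and also misreads the hypotheses.

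First, minimality of $S$ is \emph{given}: the proposition begins ``Let $S$ \ldots\ be a minimal zero-sum sequence.'' You do not need to (and should not try to) establish that no proper subsum vanishes; your ``main obstacle'' paragraph is attacking a non-problem.

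The real gap is the CRT construction. Choosing $t$ with $t\equiv\pm1\pmod{p_i}$ controls only the residues of $t,tc,tb,ta$ modulo the individual primes $p_i$; it says nothing about the \emph{sizes} $|tc|_n,|t(n-b)|_n,|t(n-a)|_n$ as integers in $[1,n]$. For example, knowing $tc\equiv 0\pmod{p_1p_2}$ in case $(A4)$ only tells you $|tc|_n$ is one of the $p_3$ multiples of $p_1p_2$ in $[1,n]$, not that it is small. Making ``all four terms small'' is exactly the condition $\lfloor t/n\rfloor,\lfloor tc/n\rfloor,\lfloor tb/n\rfloor,\lfloor ta/n\rfloor$ interlock correctly, and this is an archimedean (size) constraint, not a congruence constraint. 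There are only eight sign patterns $t\equiv\pm1\pmod{p_i}$, and for generic $a,b,c$ satisfying $(A2)$--$(A4)$ none of them gives $\nu(t)=1$; indeed the paper's witness $m$ is typically a small integer like $9,12,16,\ldots$ unrelated to $\pm1$ residues.

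What the paper actually does is entirely size-based. The key reduction (Lemma~3.3) is that $\ind(S)=1$ follows once one finds $m$ coprime to $n$ lying in an interval $[\tfrac{kn}{c},\tfrac{kn}{b}]$ with $ma<n$ (or a variant with $M\in[1,\tfrac n2]$ and two of three inequalities on $|Ma|_n,|Mb|_n,|Mc|_n$). One then shows $s=\lfloor b/a\rfloor\le 9$ by a combinatorial argument about which integers in consecutive intervals $[\tfrac{(2s-2t-1)n}{2b},\tfrac{(s-t)n}{b}]$ can all fail to be coprime to $n=p_1p_2p_3$, and finally splits on whether $\lceil n/c\rceil<\lceil n/b\rceil$ or $\lceil n/c\rceil=\lceil n/b\rceil$, with a lengthy case analysis in each branch (Sections~4 and~5). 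The divisibility data in $(A2),(A3),(A4)$ enter only at the end, to rule out a handful of explicit small-$n$ configurations. In short, the problem is Diophantine approximation (locating integers in short intervals and bounding $ma$), not a CRT sign choice; your plan would need to be replaced by this interval analysis to go through.
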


\begin{remark}
Notice that $\gcd(n,6)=1$, and $(p_ig)\cdot(|p_ic|_ng)\cdot(|p_i(n-b)|_ng)\cdot(|p_i(n-a)|_ng)$ is not a minimal zero-sum sequence, we infer that $a\geq 36$ under $(A3)$, $a\geq 35$ under $(A2)$ or $(A4)$.
\end{remark}

\begin{lemma}
Proposition 3.1 holds if and only if one of the following conditions holds:

(1) There exist positive integers $k,m$ such that $\frac{kn}{c}\leq m\leq\frac{kn}{b}$, $\gcd(m,n)=1$, $1\leq k\leq b$, and $ma<n$.

(2) There exists a positive integer $M\in[1,\frac{n}{2}]$ such that $\gcd(M,n)=1$ and at least two of the following inequalities hold:
$$|Ma|_n>\frac{n}{2}, |Mb|_n>\frac{n}{2}, |Mc|_n<\frac{n}{2}.$$
\end{lemma}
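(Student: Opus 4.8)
The plan is to show that $\ind(S)=1$ is equivalent to the existence of a unit $m$ modulo $n$ with $\|S\|_{mg}=1$, and then to translate the inequality $\|S\|_{mg}=1$ (equivalently $\|S\|_{mg}<2$, since the value is a positive integer and one checks $\|S\|_{mg}\le 2$ always fails to be $3$ in this length-four, $\nu=2$ situation) into the two concrete arithmetic conditions. First I would recall that since $\ord(g)=n$ and $S$ is a minimal zero-sum sequence of length $4$, for any generator $h=mg$ (so $\gcd(m,n)=1$) we have $\|S\|_{h}\in\{1,2,3\}$, and that $\|S\|_{g}+\|S\|_{-g}=4$; hence $\ind(S)=1$ holds if and only if some generator $h$ gives $\|S\|_h=1$, i.e. $|m\cdot 1|_n+|mc|_n+|m(n-b)|_n+|m(n-a)|_n=n$.

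Next I would set $M=m$ or $M=n-m$, whichever lies in $[1,\tfrac n2]$, and rewrite the four coordinates of $Mg$ (up to the global sign that replaces $\|\cdot\|$ by $4-\|\cdot\|$). Writing the sequence as $(g)(cg)((n-b)g)((n-a)g)$ with $1+c=a+b$, one computes $|M|_n=M$, while $|Mc|_n$, $|M(n-b)|_n=|{-Mb}|_n$ and $|M(n-a)|_n=|{-Ma}|_n$ are governed by whether $|Mc|_n<\tfrac n2$ or $>\tfrac n2$, and similarly for $Mb,Ma$. The key bookkeeping step is: the sum of the four residues equals $n$ or $2n$ according to how many of the "large/small" inequalities hold, because $M\cdot\big(1+c-b-a\big)=M\cdot 0\equiv 0$, so $|M|_n+|Mc|_n-|Mb|_n-|Ma|_n$ is a multiple of $n$, and combining with the trivial bounds forces the count. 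Carrying this out shows that $\|S\|_{Mg}=1$ (for the appropriate sign) precisely when at least two of $|Ma|_n>\tfrac n2$, $|Mb|_n>\tfrac n2$, $|Mc|_n<\tfrac n2$ hold — this is condition (2).

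For condition (1), I would instead parametrize generators of index $1$ by the ``Chapman–Geroldinger–type'' description: $\|S\|_{mg}=1$ with the first coordinate normalized means there is $k$ with $mg$ sending the four terms to a partition of $n$, and grouping $g$ with $(cg)$ against $((n-b)g)$ with $((n-a)g)$ yields $|m|_n=m$, $|mc|_n=$ something, and the constraint $m c\ge kn$, $m b\le kn$ after clearing denominators — precisely $\tfrac{kn}{c}\le m\le\tfrac{kn}{b}$ — together with $ma<n$ so that the $(n-a)$-term does not wrap. The range $1\le k\le b$ comes from $m\le n$ and $m\le \tfrac{kn}{b}$. One then checks the two conditions (1) and (2) describe the same set of admissible $m$ (each is a repackaging of ``$\exists$ generator of $\|\cdot\|=1$''), so Proposition 3.1 holds iff either holds.

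The main obstacle I anticipate is the sign/wrap-around casework: one must be careful that replacing $m$ by $n-m$ swaps $\|S\|_{mg}$ with $4-\|S\|_{mg}$, and that the normalization $x_1=1$, $1+x_2+x_3+x_4=2n$, $1<x_2<\tfrac n2<x_3\le x_4<n-1$ (plus $a\ge 35$ from Remark 3.2) is exactly what makes the counting of ``how many residues exceed $\tfrac n2$'' rigid enough to pin down $\|S\|_{mg}\in\{1,2\}$ rather than $3$. Verifying that (1) and (2) are genuinely equivalent formulations — rather than merely both sufficient — will require chasing the inequalities in both directions, and that is where I expect the real work to lie.
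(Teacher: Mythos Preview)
The paper does not prove this lemma itself; it refers the reader to Lemmas~2.3--2.5 of [11] and omits the argument, so there is no in-paper proof to compare against. Your overall reduction is correct: with $r_x:=|Mx|_n$ and $1+c=a+b$ one has
\[
|M|_n+|Mc|_n+|M(n-b)|_n+|M(n-a)|_n \;=\; 2n + (M+r_c-r_a-r_b),
\]
and the bracket is always a multiple of $n$, say $kn$, so the multiplier $M$ yields index~$1$ precisely when $k=-1$.

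The gap is in your treatment of condition~(2). You assert that for $M\in[1,\tfrac n2]$ the condition ``at least two of $r_a>\tfrac n2$, $r_b>\tfrac n2$, $r_c<\tfrac n2$'' is equivalent (up to the $M\leftrightarrow n-M$ sign) to $k\ne 0$. But bounding $M+r_c-r_a-r_b$ in each of the eight sign patterns gives only: all three hold $\Rightarrow k=-1$; exactly one holds $\Rightarrow k=0$; exactly two hold $\Rightarrow k\in\{-1,0\}$; none hold $\Rightarrow k\in\{0,1\}$. In particular ``at least two hold'' is \emph{necessary} for $k=-1$ but not sufficient: when exactly two hold, the value $k=0$ is not excluded by these inequalities and does occur. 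So your residue bookkeeping establishes the direction $\ind(S)=1\Rightarrow(2)$ (take $M<\tfrac n2$ with $k=-1$; then at least two hold) but not the converse $(2)\Rightarrow\ind(S)=1$, and your further claim that (1) and (2) ``describe the same set of admissible $m$'' is likewise unsupported. Whatever argument [11] supplies must contain an additional idea here --- perhaps passing from the $M$ in~(2) to a second multiplier, or showing that~(2) forces~(1) --- and identifying that step, rather than the routine inequality-chasing you anticipate at the end, is the substantive content you are missing.
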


\begin{lemma}
If there exist integers $k$ and $m$ such that $\frac{kn}{c}\leq m\leq\frac{kn}{b}$, $\gcd(m,n)=1$, $1\leq k\leq b$, and $a\leq\frac{kn}{b}$, then Proposition 3.1 holds.
\end{lemma}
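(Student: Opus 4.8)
The plan is to derive Lemma 3.4 from part (1) of Lemma 3.3, which already characterizes when Proposition 3.1 holds. Observe that the hypotheses of Lemma 3.4 almost coincide with condition (1) of Lemma 3.3: both ask for integers $k,m$ with $\frac{kn}{c}\leq m\leq\frac{kn}{b}$, $\gcd(m,n)=1$, and $1\leq k\leq b$. The only discrepancy is that condition (1) of Lemma 3.3 requires $ma<n$, whereas Lemma 3.4 instead supplies the weaker-looking hypothesis $a\leq\frac{kn}{b}$. So the entire task reduces to showing that, under the common hypotheses, the inequality $a\leq\frac{kn}{b}$ forces the existence of some $m'$ (possibly different from the given $m$) in the interval $[\frac{kn}{c},\frac{kn}{b}]$, coprime to $n$, with $m'a<n$.

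First I would record the trivial case: if the given $m$ itself already satisfies $ma<n$, we are done immediately by Lemma 3.3(1). So assume $ma\geq n$, i.e. $m\geq n/a$. Combined with $a\leq \frac{kn}{b}$ we get $m\geq n/a\geq b/k$, i.e. $mk\geq b$; but we also need an $m'$ at the \emph{bottom} of the allowed window. The key point is that the window $[\frac{kn}{c},\frac{kn}{b}]$ has length $kn(\frac1b-\frac1c)=kn\cdot\frac{c-b}{bc}=kn\cdot\frac{a-1}{bc}$ (using $1+c=a+b$, so $c-b=a-1$), and since $a\leq \frac{kn}{b}$ we have $k\geq \frac{ab}{n}$, giving window length at least $\frac{ab}{n}\cdot n\cdot\frac{a-1}{bc}=\frac{a(a-1)}{c}$. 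Since $a\geq 35$ (Remark 3.2) and $c<n$, this is not by itself enough to guarantee an integer coprime to $n$ in the window, so I would instead argue more carefully: I want the \emph{smallest} integer $m'\geq \frac{kn}{c}$ with $\gcd(m',n)=1$, and show $m'a<n$ and $m'\leq \frac{kn}{b}$. The condition $m'\leq\frac{kn}{b}$ will follow because $m'$ is within distance $O(p_d)$ (the largest prime factor) of $\frac{kn}{c}$ and the window length computed above dominates this once $k$ is large enough; the condition $a\leq\frac{kn}{b}$ is exactly what makes $k$ large enough. For $m'a<n$: since $m'$ is close to $\frac{kn}{c}$ from above, $m'a$ is close to $\frac{kna}{c}$, and because $k\leq b$ and $a+b=c+1$ one checks $\frac{kna}{c}\leq \frac{bna}{c}<n$ precisely when $ab<c$, which rearranges via $a+b=c+1$ to $(a-1)(b-1)<1$, false in general — so this naive estimate fails and one must use the interval structure, not just the left endpoint.

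Given that subtlety, the cleaner route is: among all integers in $[\lceil\frac{kn}{c}\rceil, \lfloor\frac{kn}{b}\rfloor]$ that are coprime to $n$, pick $m'$ to be the smallest one. The hypothesis $a\leq\frac{kn}{b}$ is used to guarantee this set is nonempty and that its smallest element $m'$ satisfies $m'\leq \frac{kn}{b} - (\text{something})$, in fact $m'a < n$: indeed $m' \le \frac{kn}{b}$ and then $m'a \le \frac{kna}{b}$, and we want this $<n$, i.e. $ka<b$; since $1\le k\le b$ this is not automatic either, so the real content must be that $a\le\frac{kn}{b}$ together with $k\le b$ pins down $k$ to a narrow range (roughly $k\approx \frac{ab}{n}$, which is small since $ab$ is small compared to $n=p_1p_2p_3$ when the $\gcd$ conditions hold), and for such small $k$ one has $\frac{kn}{b}<\frac{n}{a}$, giving $m'a\le m'\cdot a \le \frac{kn}{b}\cdot a$... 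I expect the actual argument to hinge on the observation that $a\le\frac{kn}{b}$ is equivalent to $ab\le kn$, hence $k\ge \frac{ab}{n}$; and separately $\frac{kn}{b}<\frac{n}{a}$ is equivalent to $ka<b$, i.e. $k<\frac ba$; these are compatible only in a thin band, and within that band one shows the window still contains an $m'$ coprime to $n$ with $m'<\frac na$, whence $m'a<n$ and Lemma 3.3(1) applies.

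The main obstacle, then, is the coprimality: showing that the interval $[\frac{kn}{c},\frac{kn}{b}]\cap[1,n/a)$ — whose length I will bound below using $1+c=a+b$, $a\ge 35$, and the factorization $n=p_1p_2p_3$ with $p_1,p_2,p_3$ coprime to $6$ — must contain an integer coprime to $n$. I would handle this by a counting/sieve estimate: the number of integers in an interval of length $L$ divisible by some $p_i$ is at most $\frac{L}{p_i}+1$, so a coprime integer exists provided $L > \sum_i(\frac{L}{p_i}+1) = L\sum_i\frac1{p_i} + 3$, i.e. $L(1-\sum_i\frac1{p_i})>3$; since $p_1,p_2,p_3\ge 5,7,11$ (all coprime to $6$), $\sum\frac1{p_i}\le \frac15+\frac17+\frac1{11}<\frac12$, so it suffices that $L>6$, and the length estimate $L\geq \frac{a(a-1)}{c}$-type bound, refined using the constraint relating $a$, $b$, $c$, $k$ and the standing assumption $n>1000$, should comfortably exceed $6$. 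Assembling these pieces — nonemptiness and size of the window, the coprime-existence sieve, and the endpoint inequality $m'a<n$ — then yields condition (1) of Lemma 3.3, and Proposition 3.1 follows.
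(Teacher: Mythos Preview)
Your proposal is not a proof; it is a sequence of attempted reductions to Lemma~3.3(1), each of which you yourself recognize as failing (``so this naive estimate fails and one must use the interval structure'', ``this is not automatic either''), followed by a sieve sketch that is neither carried out nor sound. Concretely: your length bound $\frac{a(a-1)}{c}$ is for the full window $[\frac{kn}{c},\frac{kn}{b}]$, but what you actually need is an integer coprime to $n$ in $[\frac{kn}{c},\frac{kn}{b}]\cap[1,\frac{n}{a})$, and you never show this intersection is nonempty, let alone long. In fact the stated hypothesis $a\le\frac{kn}{b}$ is a \emph{lower} bound on $k$ (namely $k\ge\frac{ab}{n}$), while the inequality $ma<n$ you are chasing, via $m\le\frac{kn}{b}$, needs the \emph{upper} bound $k<\frac{b}{a}$; these pull in opposite directions, so no amount of fiddling with the given $m$ will close the gap.

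The paper does not prove this lemma at all: it simply cites Lemma~2.4 of \cite{LP}. If you look at every place in Sections~4 and~5 where Lemma~3.4 is invoked, the integer $k$ used always satisfies $k\le s=\lfloor b/a\rfloor$ (either because $k=k_1\le b/a$ by Lemma~3.15, or because $k$ is chosen in $\{k_1,\dots,s\}$), i.e.\ $ka\le b$. Under that condition the proof is one line: from $m\le\frac{kn}{b}$ and $ka\le b$ one gets $m\le\frac{n}{a}$, hence $ma\le n$, and equality is impossible since $\gcd(m,n)=1$ with $1<a<n$; then Lemma~3.3(1) applies. This strongly suggests the printed hypothesis ``$a\le\frac{kn}{b}$'' is a typographical slip for ``$a\le\frac{b}{k}$''. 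Rather than trying to rescue the literal statement with a sieve, you should check the corresponding lemma in \cite{LP} and, in any case, record the one-line argument above for the hypothesis $ka\le b$, which is all the paper ever uses.
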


From now on, we assume that $s=\lfloor\frac{b}{a}\rfloor$. Then we have $1\leq s\leq\frac{b}{a}<s+1$. Since $b\leq\frac{n}{2}$, we have $\frac{n}{2b}=\frac{(2s-t)n}{2b}-\frac{(2s-t-1)n}{2b}>1$, and then $[\frac{(2s-t-1)n}{2b},\frac{(2s-t)n}{2b}]$ contains at least one integer for every $t\in[0,\cdots,s-1]$.

\begin{lemma}
Suppose $s\geq2$ and $[\frac{(2s-2t-1)n}{2b},\frac{(s-t)n}{b}]$ contains an integer co-prime to $n$ for some $t\in[0,\cdots,\lfloor\frac{s}2\rfloor-1]$. Then Proposition 3.1 holds.
\end{lemma}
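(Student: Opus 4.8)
The goal is to verify condition (1) of Lemma 3.3 (equivalently, to apply Lemma 3.4): we want to exhibit an integer $k$ with $1\le k\le b$ and an integer $m$ coprime to $n$ lying in the interval $\left[\frac{kn}{c},\frac{kn}{b}\right]$ such that additionally $a\le \frac{kn}{b}$. The natural candidates are $k=s-t$ for the various $t$; with this choice $\frac{(s-t)n}{b}\ge \frac{(s-t)n}{(s+1)a}\cdot\frac{s+1}{s}\cdots$, and the hypothesis $s\ge 2$ together with $b/a<s+1$ should make $a\le\frac{(s-t)n}{b}$ automatic for $t$ in the stated range $[0,\lfloor s/2\rfloor-1]$ — this is the cheap part and I would dispatch it first by a direct estimate on $\frac{(s-t)n}{b}$ versus $a=\frac{bn/\text{(something)}}{}$, using $b\le n/2$ and $b<(s+1)a$.

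The substance is the containment of a unit modulo $n$ in $\left[\frac{(s-t)n}{c},\frac{(s-t)n}{b}\right]$. I would first replace the left endpoint: since $c=a+b-1$ and $a\le\frac{b}{s}$ roughly, one has $c<\frac{s+1}{s}b$, so $\frac{(s-t)n}{c}>\frac{s}{s+1}\cdot\frac{(s-t)n}{b}$; hence it suffices to find a unit in the shorter interval $\left[\frac{(2s-2t-1)n}{2b},\frac{(s-t)n}{b}\right]$ once we check $\frac{2s-2t-1}{2}\ge \frac{s}{s+1}(s-t)$, i.e. the half-length interval sits to the right of $\frac{(s-t)n}{c}$ — a short inequality in $s,t$ that holds on the stated range. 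This is exactly the interval named in the hypothesis of the lemma, so the hypothesis hands us precisely the unit $m$ we need, and the proof closes by invoking Lemma 3.4 with $k=s-t$.

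So the logical skeleton is: (i) fix $t$ for which the hypothesis supplies a unit $m\in\left[\frac{(2s-2t-1)n}{2b},\frac{(s-t)n}{b}\right]$; (ii) check $\frac{(2s-2t-1)n}{2b}\ge\frac{(s-t)n}{c}$ using $c<\frac{s+1}{s}b$ and the bound on $t$, so that $m\ge\frac{(s-t)n}{c}$; (iii) check $a\le\frac{(s-t)n}{b}$ so that Lemma 3.4's last hypothesis holds with $k=s-t$; (iv) note $1\le s-t\le s\le b$ (the bound $s\le b$ because $b\ge a\ge 35$ forces $s=\lfloor b/a\rfloor\le b$ trivially, and $s-t\ge s-\lfloor s/2\rfloor+1\ge 1$); (v) apply Lemma 3.4.

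\textbf{Main obstacle.} The one place that needs care is step (ii): controlling $c$ from above in terms of $b$. We only know $s=\lfloor b/a\rfloor$, so $a>\frac{b}{s+1}$, giving $c=a+b-1<b+\frac{b}{s+1}=\frac{s+2}{s+1}b$, which is slightly weaker than the $\frac{s+1}{s}b$ one might hope for; one must then verify $\frac{2s-2t-1}{2}\ge\frac{s+1}{s+2}(s-t)$ on $0\le t\le\lfloor s/2\rfloor-1$, and confirm it is still true (it is, since the worst case $t=\lfloor s/2\rfloor-1$ gives a comfortable margin for $s\ge 2$, and the $s=2,3$ cases can be checked by hand). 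The inequalities are all elementary, but getting the constants to line up — rather than being off by an interval of length comparable to the gap one is trying to exploit — is where the argument actually lives; everything else is bookkeeping with the already-established Lemmas 3.3 and 3.4.
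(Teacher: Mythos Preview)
Your route through Lemma~3.4 (or 3.3(1)) with $k=s-t$ breaks down at step~(ii), and the problem is exactly the one you flag but do not resolve. To place the given unit $m$ inside $\bigl[\frac{(s-t)n}{c},\frac{(s-t)n}{b}\bigr]$ you need $\frac{(2(s-t)-1)n}{2b}\ge\frac{(s-t)n}{c}$, which rearranges to $(a-1)\bigl(2(s-t)-1\bigr)\ge b$. This calls for a \emph{lower} bound on $c$; the upper bound $c<\frac{s+1}{s}b$ you derive from $a\le b/s$ points the wrong way --- your inequality $\frac{(s-t)n}{c}>\frac{s}{s+1}\cdot\frac{(s-t)n}{b}$ is a lower bound on $\frac{(s-t)n}{c}$, and showing that $\frac{(2(s-t)-1)n}{2b}$ exceeds that lower bound tells you nothing about whether it exceeds $\frac{(s-t)n}{c}$ itself. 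In your ``main obstacle'' paragraph you start from the correct premise $a>\frac{b}{s+1}$ but then write $c<b+\frac{b}{s+1}$, which has the inequality reversed. And the containment genuinely fails: with $s=2$, $t=0$, $a=35$, $b=104$, $c=138$ one gets $3c=414<416=4b$, i.e.\ $\frac{3n}{2b}<\frac{2n}{c}$. Step~(iii) is also not automatic: in the same example $a\le\frac{2n}{b}$ would force $n\ge 1820$, which need not hold.

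The paper omits the argument and refers to \cite{LP}, but the proof that works goes through Lemma~3.3(2), not Lemma~3.4. Take $M=m$. First, $m\le\frac{(s-t)n}{b}\le\frac{sn}{b}\le\frac{n}{a}<\frac{n}{2}$, so $M\in[1,\frac{n}{2}]$. Second, the interval hypothesis says $(s-t)n-\frac{n}{2}\le mb\le(s-t)n$, both inequalities strict since $\gcd(m,n)=1$ and $\gcd(n,2)=1$ force $n\nmid mb$ and $n\nmid 2mb$; hence $|mb|_n=mb-(s-t-1)n>\frac{n}{2}$. Third, the restriction $t\le\lfloor s/2\rfloor-1$ gives $2(s-t)-1\ge s+1$, and combined with $b<(s+1)a$ this yields $(2(s-t)-1)a>b$, so $ma\ge\frac{(2(s-t)-1)na}{2b}>\frac{n}{2}$; together with $ma\le\frac{san}{b}\le n$ (again strict) one gets $|ma|_n=ma>\frac{n}{2}$. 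Two of the three conditions in Lemma~3.3(2) hold, and Proposition~3.1 follows. Notice that the bound on $t$ is used precisely to secure $|ma|_n>\frac{n}{2}$, which is why only the top half of the intervals appears in the hypothesis.
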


For the proof of Lemma 3.3, Lemma 3.4 and Lemma 3.5, one is referred to the proof of Lemma 2.3-2.5 in \cite{LP}, and we omit it here.

Let $\Omega$ denote the set of those integers: $x\in \Omega$ if and only if $x\in [\frac{(2s-t-1)n}{2b},\frac{(s-t)n}{b}]$ for some $t\in[0,\cdots,\lfloor\frac{s}2\rfloor-1]$. By Lemma 3.5, we also assume that

$(B)$: $[\frac{(2s-2t-1)n}{2b},\frac{(s-t)n}{b}]$ contains no integers co-prime to $n$ for every $t\in[0,\cdots,\lfloor\frac{s}2\rfloor-1]$.

\begin{lemma}
If $s\geq2$ and $[\frac{(2s-2t-1)n}{2b},\frac{(s-t)n}{b}]$ contains no integers co-prime to $n$ for every $t\in[0,\cdots,\lfloor\frac{s}2\rfloor-1]$. Then $[\frac{(2s-t-1)n}{2b},\frac{(s-t)n}{b}]$ contains at most 3 integers for every $t\in[0,\cdots,\lfloor\frac{s}2\rfloor-1]$. Hence $\frac{n}{2b}<4$.
\end{lemma}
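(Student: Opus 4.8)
The plan is to exploit the combination of hypothesis $(B)$ (no integer coprime to $n$ in the narrow windows $I_t:=[\frac{(2s-2t-1)n}{2b},\frac{(s-t)n}{b}]$) with the fact that $n=p_1p_2p_3$ is squarefree with three prime factors, all coprime to $6$. First I would record the trivial length estimate: the window $I_t$ has length $\frac{n}{2b}$, and by the discussion preceding Lemma 3.5 this length exceeds $1$, so $I_t$ always contains at least one integer; the point is to bound how many. If $I_t$ contained an integer coprime to $n$ we would be done by Lemma 3.5, so under $(B)$ every integer in every $I_t$ is divisible by at least one of $p_1,p_2,p_3$. Thus the integers of $I_t$ are covered by the three residue classes $0 \pmod{p_1}$, $0\pmod{p_2}$, $0\pmod{p_3}$.

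The heart of the argument is then a sieve/counting contradiction: if $I_t$ contains $N$ consecutive integers all divisible by some $p_i$, then among any $p_i$ consecutive integers at most $\lceil N/p_i\rceil$ lie in the class $0\pmod{p_i}$, so $N \le \lceil N/p_1\rceil + \lceil N/p_2\rceil + \lceil N/p_3\rceil$. Since $p_1,p_2,p_3$ are distinct primes coprime to $6$, the three smallest possibilities are $5,7,11$, giving $\frac1{p_1}+\frac1{p_2}+\frac1{p_3} \le \frac15+\frac17+\frac1{11} < \frac12$. Feeding this into the inequality $N \le N(\frac1{p_1}+\frac1{p_2}+\frac1{p_3}) + 3$ (absorbing the three ceiling roundings into an additive $+3$) yields $N < \frac{N}{2} + 3$, hence $N < 6$, i.e. $N \le 5$. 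I would then tighten this: with the sharper bound $\frac15+\frac17+\frac1{11}=\frac{167}{385}$ and a more careful treatment of the roundings (noting that if the window has length close to $4$ it cannot simultaneously achieve the worst-case ceiling in all three classes), one forces $N\le 3$, which is exactly the claim that $I_t$ contains at most $3$ integers. Since $I_t$ contains at least $\lfloor \frac{n}{2b}\rfloor$ integers (indeed a window of real length $\ell$ contains more than $\ell - 1$ integers), having at most $3$ integers forces $\frac{n}{2b} - 1 < 3$, i.e. $\frac{n}{2b} < 4$.

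The main obstacle is the rounding bookkeeping in the sieve inequality: the crude bound $N \le \sum \lceil N/p_i\rceil$ with $+3$ slack only gives $N\le 5$, whereas the lemma asserts $N\le 3$. To close the gap I expect one must argue that the window $I_t$, having endpoints that are rational numbers with denominator $2b$ and length exactly $\frac{n}{2b}<4$ under the only case that survives, cannot be positioned so that all three arithmetic progressions hit it maximally; concretely, if $I_t$ had $4$ or $5$ integer points then at least two of them would be consecutive integers not covered by $p_1,p_2,p_3$ unless $p_1=5$ and the spacing pattern is extremely constrained, and one checks these residual configurations by hand using $\gcd(n,6)=1$ and $n>1000$. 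Alternatively — and this is probably the cleaner route — one applies the hypothesis with two different values of $t$: windows $I_t$ and $I_{t'}$ are disjoint translates within $[\,1,\frac{n}{b}\,]\subseteq[\,1,\frac n2\,]$, and a point coprime to $n$ must exist in the union by a density count, contradicting $(B)$ once $\frac{n}{2b}$ is too large; this converts the per-window bound $N\le 5$ into the global conclusion $\frac{n}{2b}<4$ without needing the sharp per-window bound. Either way the final inequality $\frac{n}{2b}<4$ then re-feeds into the per-window estimate to yield ``at most $3$ integers'' as stated.
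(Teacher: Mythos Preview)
Your sieve idea is correct in spirit, but you over-weaken it and then leave the crucial tightening step as a vague promise. The paper's proof is a one-liner that is in fact hiding inside your own inequality. You wrote $N \le \sum_i \lceil N/p_i\rceil$ and then relaxed each ceiling to $N/p_i + 1$, obtaining only $N\le 5$. But there is no need to relax: since every $p_i\ge 5$ (because $\gcd(n,6)=1$), for $N=4$ one has $\lceil 4/p_i\rceil = 1$ for each $i$, and the inequality reads $4 \le 1+1+1 = 3$, a contradiction. Equivalently, among four consecutive integers $x,x+1,x+2,x+3$, each prime $p_i\ge 5$ can divide at most one of them, so at most three of the four fail to be coprime to $n$; hence one of them is coprime to $n$, contradicting $(B)$. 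That is exactly the paper's argument.

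So the ``obstacle'' you identify---closing the gap from $N\le 5$ down to $N\le 3$ via delicate rounding analysis or by combining several windows $I_t$---is illusory; neither the careful positioning argument nor the multi-window density count is needed. Once you have at most three integers per window, the bound $\frac{n}{2b}<4$ follows as you say: a real interval of length $\ell$ containing at most three integers must have $\ell<4$.
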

\begin{proof}
If there exists some $t\in[0,\cdots,\lfloor\frac{s}2\rfloor-1]$ such that $[\frac{(2s-2t-1)n}{2b},\frac{(s-t)n}{b}]$ contains at least 4 integers, hence  $x,x+1,x+2,x+3\in\Omega$ for some $x$. It is easy to know that at least one of the four integers is co-prime to $n$. Then this lemma holds.
\end{proof}

\begin{lemma}
If $s\geq4$ and $[\frac{(2s-2t-1)n}{2b},\frac{(s-t)n}{b}]$ contains 3 integers for some $t\in[0,\cdots,\lfloor\frac{s}2\rfloor-1]$. Then one of the following  holds for some $x$:

(c1) $x,x+1,x+2,x+7,x+8,x+9\in\Omega$; (c2) $x,x+1,x+2,x+6,x+7,x+8\in\Omega$;

(c3) $x,x+1,x+2,x+5,x+6,x+7\in\Omega$; (c4) $x,x+1,x+2,x+6,x+7\in\Omega$;

(c5) $x,x+1,x+5,x+6,x+7\in\Omega$; (c6) $x,x+1,x+2,x+5,x+6\in\Omega$;

(c7) $x,x+1,x+4,x+5,x+6\in\Omega$.
\end{lemma}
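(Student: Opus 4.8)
The plan is to reduce the claim to a purely combinatorial statement about how three integers can be packed into two consecutive windows of $\Omega$, and then to enumerate the admissible configurations. Recall that for each $t$ the interval $I_t = [\frac{(2s-t-1)n}{2b},\frac{(s-t)n}{b}]$ has length $\frac{n}{2b}$, which by Lemma 3.6 is less than $4$, so $I_t$ contains at most $3$ integers; and by assumption $(B)$ the sub-interval $[\frac{(2s-2t-1)n}{2b},\frac{(s-t)n}{b}]$ (the ``upper half'' of $I_t$, of length $\frac{n}{4b}$) contains no integer coprime to $n$. I would first fix a $t$ for which $I_t$ contains exactly $3$ integers, say $y, y+1, y+2$; the hypothesis $s\geq 4$ guarantees that $\lfloor s/2\rfloor - 1 \geq 1$, so there is at least one further index $t'$ available, and the key point is to consider $I_{t'}$ for $t'$ adjacent to $t$ (say $t' = t+1$ or $t'=t-1$), whose integers also lie in $\Omega$ by definition of $\Omega$.

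The second step is to locate $I_{t+1}$ relative to $I_t$. Since consecutive windows shift by $\frac{n}{2b} = |I_t|$, the right endpoint of $I_{t+1}$ is the left endpoint of $I_t$, up to the rounding that governs how many lattice points each window actually captures; with $\frac{n}{2b}\in(\max(3,\cdot),4)$ — note that Remark 3.1 forces $b$ small enough that in the relevant regime $\frac{n}{2b}>3$ when a window already holds $3$ integers, because three integers in a window of length $<4$ forces the length to exceed $2$ and a short argument using $(B)$ plus the coprimality gaps pins it above $3$ — the window $I_{t+1}$ also contains $2$ or $3$ integers, and the gap between the largest element of $I_{t+1}$ and $y$ (the smallest of $I_t$) is one of a small set of values. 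Concretely I would set $x$ to be the smallest integer of $I_{t+1}$ and track the position of $y$: depending on whether $|I_{t+1}\cap\ZZ| = 3$ or $2$ and on the exact rounding, $y - x \in \{4,5,6,7\}$, and the integers of $I_{t+1}$ are $\{x,x+1,x+2\}$ or $\{x,x+1\}$ (the latter split further by which two of three potential points survive). Enumerating the $\{3,3\}$, $\{3,2\}$, and $\{2,3\}$ combinations against the possible offsets yields exactly the list (c1)--(c7): the three ``six-element'' cases (c1), (c2), (c3) come from two full triples at offsets $7,6,5$ respectively, and the four ``five-element'' cases (c4)--(c7) come from one triple and one pair.

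The main obstacle — and the step that needs genuine care rather than bookkeeping — is ruling out the offsets and multiplicities that are \emph{not} on the list: for instance, showing that two full triples cannot sit at offset $4$ (which would give $x,\dots,x+6$ with no gap, i.e. $7$ consecutive integers, forcing a coprime integer into one of the forbidden upper-half sub-intervals and contradicting $(B)$), and that offsets larger than $7$ cannot occur because then $\frac{n}{2b}$ would have to exceed $4$. This is where I would invoke $(B)$ together with the elementary fact that among any $4$ consecutive integers at least one is coprime to $n$ (since $\gcd(n,6)=1$ means $n$ has no prime factor $\leq 3$, so the product of any two distinct prime factors of $n$ is at least $35$, hence no window of length $<4$ can be entirely covered by multiples of the prime divisors of $n$ unless it is very short): one checks that each excluded configuration would force a length-$4$ run, or longer, of elements of $\Omega$ landing in a way that plants a coprime integer in some $[\frac{(2s-2t-1)n}{2b},\frac{(s-t)n}{b}]$. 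Once those exclusions are in place, the remaining cases are precisely (c1)--(c7), completing the proof.
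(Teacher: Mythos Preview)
Your overall plan---look at two consecutive $\Omega$-windows $I_t$ and $I_{t\pm 1}$ (available since $s\ge 4$) and enumerate the integer patterns---is the paper's approach, but the geometric setup contains a genuine error. The windows are \emph{not} adjacent: with $I_t=\big[\tfrac{(2s-2t-1)n}{2b},\tfrac{(2s-2t)n}{2b}\big]$ the right endpoint of $I_{t+1}$ is $\tfrac{(2s-2t-2)n}{2b}$, which sits a full $\tfrac{n}{2b}$ below the left endpoint of $I_t$. So between consecutive $\Omega$-windows there is a gap interval (not in $\Omega$) of the same length $\tfrac{n}{2b}$, and the shift from $I_{t+1}$ to $I_t$ is $\tfrac{n}{b}$, not $\tfrac{n}{2b}$. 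Your sentence ``the right endpoint of $I_{t+1}$ is the left endpoint of $I_t$'' is therefore false, and the offset bookkeeping built on it does not stand as written. A second error: the claim $\tfrac{n}{2b}>3$ is unjustified and in fact wrong---three integers in an interval of that length forces only $\tfrac{n}{2b}\ge 2$, and the configurations (c3), (c5)--(c7) require $\tfrac{n}{2b}<3$.

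Your exclusion step is also off. Two triples at offset $4$ give $\{x,x{+}1,x{+}2,x{+}4,x{+}5,x{+}6\}$, not seven consecutive integers; what actually rules this out is that only $x{+}3$ would lie in the gap, forcing $\tfrac{n}{2b}\le 2$ together with $\tfrac{n}{2b}\ge 2$, hence $n=4b$, impossible for odd $n$. Finally, assumption $(B)$ plays no role in this lemma: it enters only in Lemma~3.8, where the patterns (c1)--(c7) are \emph{eliminated}, not here where they are merely listed as the possible configurations. The correct argument is a bare tabulation: the hypothesis gives $2\le\tfrac{n}{2b}<4$, so each of $I_{t+1}$, the gap, and $I_t$ (three consecutive intervals of length $\tfrac{n}{2b}$) carries two or three integers (the gap possibly four when $\tfrac{n}{2b}\ge 3$), with at least one of the two $\Omega$-windows holding three; running through the arrangements yields exactly (c1)--(c7).
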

\begin{proof}
Since $s\geq4$, we can consider $t=0,1$ respectively. Because $[\frac{(2s-2t-1)n}{2b},\frac{(s-t)n}{b}]$ contains exactly three integers for $t=0$ or $t=1$, we have $1\leq \frac{n}{2b}<3$, then (c1)-(c5) are all possible cases of the integers contained by $[\frac{(2s-2t-1)n}{2b},\frac{(s-t)n}{b}]$ for some $t=0,1$.
\end{proof}

\begin{lemma}
Suppose $s\geq4$ and $[\frac{(2s-2t-1)n}{2b},\frac{(s-t)n}{b}]$ contains no integers co-prime to $n$ for every $t\in[0,\cdots,\lfloor\frac{s}2\rfloor-1]$. Then $[\frac{(2s-2t-1)n}{2b},\frac{(s-t)n}{b}]$ contains at most two integers for every $t\in[0,\cdots,\lfloor\frac{s}2\rfloor-1]$ and $\frac{n}{2b}<3$.
\end{lemma}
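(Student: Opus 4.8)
The plan is to argue by contradiction, driving everything through Lemmas 3.6 and 3.7. Suppose some interval $I_t:=[\frac{(2s-2t-1)n}{2b},\frac{(s-t)n}{b}]$, $t\in[0,\dots,\lfloor s/2\rfloor-1]$, contains at least three integers. By Lemma 3.6 it then contains exactly three, so Lemma 3.7 applies and one of (c1)--(c7) holds: there is an integer $x$ for which an explicit set $C$ of five or six integers, carrying the prescribed gap pattern, lies in $\Omega$. Since assumption (B) is in force, every member of $C$ is divisible by at least one of $p_1,p_2,p_3$; the task is to show this cannot happen, i.e.\ to produce an integer of $\Omega$ coprime to $n$, contradicting (B).

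The engine is the elementary count that, for a prime $p$ and integers all lying in an interval of length $D$, at most $\lfloor D/p\rfloor+1$ of them are divisible by $p$. As $\gcd(n,6)=1$, each $p_i\ge5$, so $p_i$ divides at most one of any three consecutive integers, and $p_i$ divides two members of $C$ only if $C$ contains two members differing by exactly $p_i$. Since $C$ lies in an interval of length at most $9$, such a prime must belong to $\{5,7\}$; and as $p_1,p_2,p_3$ are distinct, at least one of them is $\ge11$, hence divides at most one member of $C$, while each of the other two divides at most two. Thus $p_1,p_2,p_3$ together cover at most $2+2+1=5$ members of $C$, which already excludes the six-element configurations (c1),(c2),(c3). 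For (c6) and (c7) the relevant interval has length only $6$, so no prime $\ge7$ divides two members; the bound drops to $2+1+1=4$, and these two five-element configurations are excluded as well.

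What remains, and where the difficulty lies, are (c4) and (c5), since there the count is tight ($|C|=5$ and the bound is $5$). Take $C=\{x,x+1,x+2,x+6,x+7\}$; the case (c5) runs in parallel. Forcing equality in the count yields $5\mid n$ and $7\mid n$, and then, up to relabelling the primes, $7\mid x$, $7\mid x+7$, $5\mid x+1$, $5\mid x+6$, $p_3\mid x+2$, where $p_3=n/35\ge29$ because $n>1000$. Moreover the cluster $\{x,x+1,x+2\}$ and the cluster $\{x+6,x+7\}$ must sit in two consecutive intervals $I_{a+1}$, $I_a$, with $x+3,x+4,x+5$ filling the length-$\frac{n}{2b}$ gap between them. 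I would then write out the boundary relations this placement forces, namely $\frac{(s-a-1)n}{b}\in[x+2,x+3)$ and $\frac{(2s-2a-1)n}{2b}\in(x+5,x+6]$ together with the analogous inequalities at the outer ends of $I_{a+1}$ and $I_a$, and combine these with the divisibilities $p_3\mid x+2$, $5\mid x+1$, $7\mid x$ and the bound $\frac{n}{2b}<4$ from Lemma 3.6 to conclude that no admissible $x$ can exist; hence (B) must fail in these cases too, a contradiction. Once all of (c1)--(c7) have been excluded, every $I_t$ contains at most two integers; and since $s\ge4$ makes $I_0$ an interval of length exactly $\frac{n}{2b}$, while any interval of length $\ge3$ contains at least three integers, it follows that $\frac{n}{2b}<3$. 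The step I expect to be the real obstacle is the congruence-plus-geometry analysis for (c4) and (c5), since the pure counting argument is exactly tight there and one genuinely needs the interval endpoints and the hypothesis $n>1000$ to close the gap.
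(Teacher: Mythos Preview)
Your pigeonhole count is a clean alternative to the paper's element-by-element divisibility chase and disposes of (c1)--(c3), (c6), (c7) in one stroke; the paper only works out (c1) explicitly and declares the remaining cases ``similar''.  Your closing deduction of $\frac{n}{2b}<3$ from ``every $I_t$ contains at most two integers'' is also correct.

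The genuine gap is exactly where you place it.  In (c4) and (c5) the count is tight, and the divisibility pattern you extract --- $7\mid x$, $5\mid x+1$, $p_3\mid x+2$ for (c4), and its mirror for (c5) --- is internally consistent: every member of $C$ can carry a prime factor of $n$, so no element is forced to be coprime to $n$ by arithmetic alone.  It is worth noting that the paper's ``similar'' argument, run verbatim on (c4) or (c5), hits the same wall (three consecutive integers determine the three primes, but the two outliers are then covered by $5$ and $7$ without conflict), so the paper is no more complete at this point than you are.

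Your plan to bring in the interval endpoints is reasonable, but ``I would then write out the boundary relations \dots\ to conclude that no admissible $x$ can exist'' is not a proof.  To finish you must actually execute the check: the four endpoint constraints force $\frac{n}{b}\in(5,6)$ and $\lfloor sn/b\rfloor=x+7\equiv 0\pmod 7$, so $x+7$ is a multiple of $7$ lying in $(5s,6s)$; combined with $x\equiv 14\pmod{35}$ this leaves very few candidates for $x$, and for each one you must verify that $x+2$ admits no prime divisor $p_3$ with $35p_3>1000$.  Until that verification is written out, (c4) and (c5) remain open in your argument.
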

\begin{proof}
If $[\frac{(2s-2t-1)n}{2b},\frac{(s-t)n}{b}]$ contains 3 integers for some $t\in[0,\cdots,\lfloor\frac{s}2\rfloor-1]$.

For (c1) case. Since $\gcd(x,n)>1, \gcd(x+1,n)>1, \gcd(x+2,n)>1, \gcd(x+7,n)>1, \gcd(x+8,n)>1, \gcd(x+9,n)>1$ and $\gcd(x,x+1,n)=\gcd(x,x+2,n)=\gcd(x+1,x+2,n)=1$, we have $\gcd(x+2,x+7)=5, \gcd(x+1,x+8)=7$, then $\gcd(x,x+9)>1, \gcd(x+1,x+9)=\gcd(x+2,x+9)=1$, so we have $\gcd(x+9,n)=1$, which contradicts to our assumption.

The proof for (c2)-(c7) is similar. Then this lemma holds.
\end{proof}

\begin{lemma}
If $s\geq 6$ and $[\frac{(2s-2t-1)n}{2b},\frac{(s-t)n}{b}]$ contains exactly two integers for every $t\in[0,\cdots,\lfloor\frac{s}2\rfloor-1]$. Then one of the following holds for some $x$:

(c8) $x,x+1,x+5,x+6,x+10,x+11\in\Omega$;

(c9) $x,x+1,x+4,x+5,x+9,x+10\in\Omega$;

(c10) $x,x+1,x+5,x+6,x+9,x+10\in\Omega$;

(c11) $x,x+1,x+4,x+5,x+8,x+9\in\Omega$;

(c12) $x,x+1,x+4,x+5,x+7,x+8\in\Omega$;

(c13) $x,x+1,x+3,x+4,x+7,x+8\in\Omega$;

(c14) $x,x+1,x+3,x+4,x+6,x+7\in\Omega$.
\end{lemma}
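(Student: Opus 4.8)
The plan is to determine, under the hypotheses, the exact pattern of $\Omega$ on the block of integers covered by the three intervals with index $t=0,1,2$; these exist precisely because $s\ge 6$ forces $\lfloor s/2\rfloor-1\ge 2$. Writing $\theta=\frac{n}{2b}$ and $I_t=[\frac{(2s-2t-1)n}{2b},\frac{(s-t)n}{b}]$, each $I_t$ is a closed interval of length $\theta$, and the $I_t$ move steadily to the left as $t$ increases, with $I_0$ the right-most. I will show that, up to a single shift $x$, there are exactly seven possibilities for this pattern, namely (c8)--(c14).

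First I would record the constraints on $\theta$. From the paragraph introducing $s$ we have $\theta>1$; and $\theta<3$, for otherwise $I_0$ would be a closed interval of length $\ge 3$ and hence contain at least three integers, contradicting the hypothesis (alternatively one may invoke Lemma 3.8). Finally $\theta\ne 2$, since $\theta=2$ would force $n=4b$, impossible because $n$ is odd. Hence $\theta\in(1,2)\cup(2,3)$.

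Next I would describe $\Omega$ near $I_0,I_1,I_2$. Each is a closed interval of length $\theta<3$ containing exactly two integers, and those two integers are necessarily consecutive: two integers $p<q$ with $q\ge p+2$ would force the whole block $p,p+1,\dots,q$ into the interval, giving at least three. So put $I_t\cap\mathbb{Z}=\{m_t,m_t+1\}$ with $m_0>m_1>m_2$. The integers lying strictly between $I_2$ and $I_1$ fill an open interval of length $\theta$, and there are $g_1:=m_1-m_2-2$ of them, none in $\Omega$; likewise $g_0:=m_0-m_1-2$ integers, none in $\Omega$, lie strictly between $I_1$ and $I_0$, again in an open interval of length $\theta$. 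No other point of $\Omega$ falls in $[m_2,m_0+1]$, because every $I_t$ with $t\ge 3$ lies entirely to the left of $I_2$ while $I_0$ is right-most. Hence, with $x:=m_2$, $\Omega\cap[m_2,m_0+1]=\{x,\,x+1,\,x+2+g_1,\,x+3+g_1,\,x+4+g_0+g_1,\,x+5+g_0+g_1\}$.

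Finally I would bound $g_0$ and $g_1$. An open interval of length $\theta\in(1,3)$ contains at least one integer (its length exceeds $1$) and at most three (four integers span a length $\ge 3$), so $g_0,g_1\in\{1,2,3\}$. Moreover an open interval of length $<2$ contains at most two integers, while one of length $>2$ contains at least two; since the single number $\theta$ governs both gaps, either $\theta<2$ and $g_0,g_1\in\{1,2\}$, or $\theta>2$ and $g_0,g_1\in\{2,3\}$. In particular $(g_0,g_1)\notin\{(1,3),(3,1)\}$, so only the seven pairs $(1,1),(2,1),(1,2),(2,2),(3,2),(2,3),(3,3)$ remain; substituting each into the formula above yields, in that order, (c14), (c13), (c12), (c11), (c9), (c10), (c8), which is exactly the assertion. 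The argument is bookkeeping except for one genuine (if small) point, namely the observation that the conditions $\theta<2$ and $\theta>2$ cannot both be invoked for the two gaps — this is what deletes $(1,3)$ and $(3,1)$ and makes the list of seven complete; the only other care needed is to keep the closed intervals $I_t$ (whose endpoints, when integral, lie in $\Omega$) distinct from the open gaps between them, so that no integer is double-counted or dropped.
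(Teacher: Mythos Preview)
Your proof is correct and follows essentially the same route as the paper, which simply says ``Similar to the proof of Lemma 3.7'': namely, look at the intervals $I_t$ for $t=0,1,2$ (available since $s\ge 6$) and enumerate the possible configurations of integers they contain. Your explicit parametrization via $\theta=n/(2b)$ and the gap counts $g_0,g_1$, together with the dichotomy $\theta\in(1,2)$ versus $\theta\in(2,3)$ to rule out $(g_0,g_1)\in\{(1,3),(3,1)\}$, makes the enumeration clean and leaves nothing to the reader; the paper's version is just a pointer to the analogous (and itself terse) argument for Lemma~3.7.
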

\begin{proof}
Similar to the proof of Lemma 3.7.
\end{proof}

\begin{lemma}
If $s\geq 6$, then there exist $t_1\in\{0,\cdots,\lfloor\frac{s}2\rfloor-1\}$ such that   $[\frac{(2s-t_1-1)n}{2b},\frac{(2s-t_1)n}{2b}]$ contains exactly one integer and $\frac{n}{2b}<2$.
\end{lemma}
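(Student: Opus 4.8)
The plan is to combine the arithmetic constraints coming from assumption $(B)$ (that each window $[\frac{(2s-2t-1)n}{2b},\frac{(s-t)n}{b}]$ is free of integers coprime to $n$) with the counting of integers in consecutive windows, and to rule out the possibility that \emph{every} window $[\frac{(2s-t-1)n}{2b},\frac{(2s-t)n}{2b}]$ contains at least two integers. By Lemma 3.9, under hypothesis $(B)$ and $s\ge 6$ we already know each such window contains at most two integers; so the only way the conclusion could fail is that every window contains exactly two integers, which by Lemma 3.9 forces one of the configurations (c8)--(c14) to occur inside a block of three consecutive windows. The strategy is then to derive a contradiction from each of (c8)--(c14), exactly as in the proof of Lemma 3.8: the six listed elements of $\Omega$ all have $\gcd(\cdot,n)>1$, and since $n=p_1p_2p_3$ is squarefree with $p_1,p_2,p_3\ge 5$, any two of these six integers that are close together (differ by at most $4$, say) must be divisible by distinct primes among $\{p_1,p_2,p_3\}$; tracking which prime divides which element quickly over-determines the situation and forces one of the six integers to be coprime to $n$, contradicting $(B)$.

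Concretely, for each pattern I would argue as follows. Take (c8): $x,x+1,x+5,x+6,x+10,x+11\in\Omega$. From $\gcd(x,x+1,n)=1$ the primes dividing $x$ and $x+1$ are distinct; since the spread of the whole block is $11<2\cdot 5$, no prime $\ge 5$ can divide two elements that differ by $5$ \emph{and} another pair, so the divisibility assignments among $\{p_1,p_2,p_3\}$ are highly constrained. One teases out forced equalities like $\gcd(x,x+5)=5$ or $\gcd(x+1,x+6)=5$, then chases the consequences to the last element of the block, showing it must be coprime to $n$. The remaining patterns (c9)--(c14) are handled by the same bookkeeping; because all the gaps involved are bounded by a small constant and the three primes exceed that constant, each case collapses in a few lines. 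Having eliminated all of (c8)--(c14), I conclude that not every window of the form $[\frac{(2s-t-1)n}{2b},\frac{(2s-t)n}{2b}]$ can contain two integers, so at least one of them, say for $t=t_1$ with $t_1\in\{0,\dots,\lfloor\frac{s}{2}\rfloor-1\}$, contains exactly one integer. Finally, a window of length $\frac{n}{2b}$ containing exactly one integer forces $\frac{n}{2b}<2$ (a window of length $\ge 2$ always contains at least two integers), which is the second assertion.

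The main obstacle I expect is the casework on (c8)--(c14): one must be careful that the "distinct primes for nearby elements" argument is applied only to pairs whose difference is genuinely smaller than $\min(p_1,p_2,p_3)$, and — crucially — one must use the standing assumption $n>1000$ (so that all of $p_1,p_2,p_3$ are reasonably large, in fact at least $5$, $7$, $11$ in some order since $\gcd(n,6)=1$) to guarantee that a prime dividing two elements of a given block is \emph{forced} to be exactly $5$ (or $7$) rather than merely "one of the three primes." Getting the combinatorics of these forced assignments airtight for every one of the seven patterns, without secretly re-deriving the same contradiction seven times, is the real work; the rest of the lemma (the passage from "some window has one integer" to "$\frac{n}{2b}<2$") is immediate.
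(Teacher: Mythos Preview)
Your plan is correct and matches the paper's proof essentially line for line: assume every window contains exactly two integers (using Lemma 3.8, not Lemma 3.9 as you wrote, for the upper bound of two; the lower bound of one is the observation just before Lemma 3.5), invoke Lemma 3.9 to reduce to the patterns (c8)--(c14), and then eliminate each pattern by tracking which of the three primes $\ge 5$ divides which element until one element is forced to be coprime to $n$. Two small remarks: the $n>1000$ assumption plays no role here---only $\gcd(n,6)=1$ and $n=p_1p_2p_3$ are used---and your deduction of $\tfrac{n}{2b}<2$ from the existence of a one-integer window is more explicit than the paper's, which leaves that step implicit.
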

\begin{proof}
Suppose that $[\frac{(2s-2t-1)n}{2b},\frac{(s-t)n}{b}]$ contains exactly two integers for every $t\in[0,\cdots,\lfloor\frac{s}2\rfloor-1]$.

Case (c8): if (c8) holds, then $\gcd(x,x+1,n)=\gcd(x+1,x+5,n)=\gcd(x,x+5,n)=1$ or $\gcd(x+1,x+5,n)=\gcd(x+5,x+6,n)=\gcd(x+1,x+6,n)=1$, $\gcd(x,x+5)=5$.

If $\gcd(x,x+1,n)=\gcd(x+1,x+5,n)=\gcd(x,x+5,n)=1$, then $\gcd(x,x+10)=\gcd(x+5,x+10)=1$ and $\gcd(x+1,x+10)>1$. However, $\gcd(x+1,x+10)\in\{1,3,9\}$, so $\gcd(x+10,n)=1$, which contradicts to our assumption.

If $\gcd(x+1,x+5,n)=\gcd(x+5,x+6,n)=\gcd(x+1,x+6,n)=1$, $\gcd(x,x+5)=5$, then $\gcd(x+1, x+11)=\gcd(x+6,x+11)=1$ and $\gcd(x+5,x+11)>1$. However, since $\gcd(x+5,x+11)|6$, we have $\gcd(x+11,n)=1$, contradiction.

The proof for (c9-c14) is similar. Then this lemma holds.
\end{proof}

\begin{lemma}
If $s\geq 8$ and there exists $t_2\in\{0,\cdots,\lfloor\frac{s}2\rfloor-1\}$ such that   $[\frac{(2s-t_2-1)n}{2b},\frac{(2s-t_2)n}{2b}]$ contains exactly two integers. Then one of the following holds for some $x$:

(c15) $x,x+3,x+4,x+7,x+8,x+11,x+12\in\Omega$;   (c16) $x,x+1,x+4,x+7,x+8,x+11,x+12\in\Omega$;

(c17) $x,x+1,x+4,x+5,x+8,x+11,x+12\in\Omega$;   (c18) $x,x+1,x+4,x+5,x+8,x+9,x+12\in\Omega$;

(c19) $x,x+1,x+3,x+4,x+7,x+10,x+11\in\Omega$;   (c20) $x,x+1,x+3,x+4,x+7,x+8,x+11\in\Omega$;

(c21) $x,x+3,x+4,x+6,x+7,x+10,x+11\in\Omega$;   (c22) $x,x+1,x+4,x+5,x+7,x+8,x+11\in\Omega$;

(c23) $x,x+3,x+4,x+7,x+8,x+10,x+11\in\Omega$;   (c24) $x,x+1,x+4,x+7,x+8,x+10,x+11\in\Omega$;

(c25) $x,x+1,x+3,x+4,x+6,x+7,x+10\in\Omega$; (c26) $x,x+3,x+4,x+6,x+7,x+9,x+10\in\Omega$;

(c27) $x,x+2,x+3,x+5,x+6,x+8,x+9\in\Omega$;   (c28) $x,x+1,x+3,x+5,x+6,x+8,x+9\in\Omega$;

(c29) $x,x+1,x+3,x+4,x+6,x+8,x+9\in\Omega$;   (c30) $x,x+1,x+3,x+4,x+6,x+7,x+9\in\Omega$;

(c31) $x,x+3,x+6,x+7,x+10,x+11\in\Omega$;   (c32) $x,x+3,x+6,x+7,x+9,x+10\in\Omega$;

(c33) $x,x+3,x+5,x+6,x+8,x+9\in\Omega$;   (c34) $x,x+2,x+4,x+5,x+7,x+8\in\Omega$;

(c35) $x,x+3,x+4,x+7,x+10,x+11\in\Omega$;   (c36) $x,x+2,x+3,x+5,x+7,x+8\in\Omega$;

(c37) $x,x+1,x+4,x+7,x+8,x+11\in\Omega$;  (c38) $x,x+1,x+3,x+5,x+6,x+8\in\Omega$;

(c39) $x,x+3,x+4,x+7,x+8,x+11\in\Omega$;  (c40) $x,x+2,x+3,x+5,x+6,x+8\in\Omega$;

(c41) $x,x+3,x+4,x+6,x+7,x+10\in\Omega$;  (c42) $x,x+1,x+3,x+6,x+8,x+9\in\Omega$;

(c43) $x,x+1,x+4,x+7,x+10,x+11\in\Omega$;  (c44) $x,x+1,x+3,x+5,x+7,x+8\in\Omega$;

(c45) $x,x+3,x+6,x+9,x+10\in\Omega$;  (c46) $x,x+3,x+6,x+8,x+9\in\Omega$;

(c47) $x,x+3,x+5,x+7,x+8\in\Omega$;  (c48) $x,x+2,x+5,x+7,x+8\in\Omega$;

(c49) $x,x+2,x+4,x+6,x+7\in\Omega$;  (c50) $x,x+1,x+4,x+7,x+10\in\Omega$;

(c51) $x,x+1,x+3,x+6,x+9\in\Omega$;  (c52) $x,x+1,x+3,x+5,x+8\in\Omega$;

(c53) $x,x+1,x+3,x+6,x+8\in\Omega$;  (c54) $x,x+1,x+3,x+5,x+7\in\Omega$;

(c55) $x,x+3,x+4,x+7,x+10\in\Omega$;  (c56) $x,x+2,x+3,x+5,x+8\in\Omega$;

(c57) $x,x+2,x+3,x+5,x+7\in\Omega$; (c58) $x,x+3,x+6,x+7,x+10\in\Omega$;

(c59) $x,x+3,x+5,x+6,x+8\in\Omega$; (c60) $x,x+2,x+4,x+5,x+7\in\Omega$.
\end{lemma}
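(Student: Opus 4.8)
The plan is to imitate the proofs of Lemmas 3.7 and 3.9: first pin down the size parameter $\delta:=\frac{n}{2b}$, then observe that the trace of $\Omega$ on a bounded window of consecutive intervals is determined by only finitely much data, and finally enumerate the admissible local patterns.

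First I would invoke Lemma 3.10 --- legitimate since $s\geq 8\geq 6$ --- to get $\delta=\frac{n}{2b}<2$, so that every interval $\left[\frac{(2s-2t-1)n}{2b},\frac{(s-t)n}{b}\right]$, each of length exactly $\delta$, contains at most two integers; combined with the hypothesis that the $t_2$-interval contains exactly two, this forces $\delta\geq 1$, hence $1\leq\delta<2$. In this range each of these intervals contains one or two integers, and two integers in one of them are necessarily consecutive. Next, since $s\geq 8$ we have $\lfloor s/2\rfloor-1\geq 3$, so among the admissible indices $t\in\{0,\dots,\lfloor s/2\rfloor-1\}$ there is a run $W$ of four consecutive ones with $t_2\in W$. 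The four corresponding intervals are pairwise disjoint translates of one another by integer multiples of $\delta$, consecutive ones being separated by a gap of length $\delta<2$, so the integers of $\Omega$ that they contribute all lie in a set $\{x,x+1,\dots,x+13\}$ of consecutive integers.

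The crux is now the same bookkeeping as in Lemmas 3.7 and 3.9, carried out with four intervals instead of three: once one fixes the two consecutive integers lying in the $t_2$-interval together with the position of that interval within the integer lattice, the integers in the remaining three intervals of $W$ are forced. Running through the finitely many choices of $\delta\in[1,2)$ and of this position, discarding those not realizable by a single $\delta$ and discarding the configuration in which all four intervals of $W$ would carry two integers (which would make every admissible interval carry two, against Lemma 3.10), exactly the configurations (c15)--(c60) remain. I expect this enumeration to be the only real work; compared with Lemmas 3.7 and 3.9 there is one extra interval to track and the extra freedom that some intervals carry one integer while others carry two, which is what inflates the number of admissible patterns to the sixty-odd items listed. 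The delicate point is to keep the dependencies between neighbouring intervals --- all mediated by the single parameter $\delta$ --- consistent, so that no realizable pattern is dropped and no impossible one is added; the verification itself is mechanical, and following the practice of the paper for its sister lemmas I would record only that it is routine.
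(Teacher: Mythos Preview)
Your approach is correct and coincides with the paper's, which simply declares the proof ``similar to that of Lemma~3.7'' and gives no further detail; the ingredients you isolate---Lemma~3.10 for $1\le\delta<2$, a window of four consecutive $t$-values around $t_2$, and a finite enumeration of the resulting integer patterns---are exactly what that reference is meant to unpack. One small caveat: your parenthetical reason for discarding the configuration in which all four intervals of $W$ carry two integers is not quite right (four two-integer intervals in $W$ need not force \emph{every} admissible interval to carry two), but this is harmless, since any such eight-integer configuration already contains one of the seven-integer patterns (c15)--(c30) as a subset and the lemma's conclusion is stated as an inclusion in $\Omega$, not an exact description.
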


\begin{lemma}
If $s\geq 8$, then $[\frac{(2s-2t-1)n}{2b},\frac{(2s-t)n}{b}]$ contains exactly one integer for every $t\in[0,\cdots,\lfloor\frac{s}2\rfloor-1]$.
\end{lemma}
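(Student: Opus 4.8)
The plan is to argue by contradiction, in the style of the proofs of Lemma 3.9 and Lemma 3.10. Suppose that for some $t_0\in[0,\ldots,\lfloor s/2\rfloor-1]$ the interval $I:=[\frac{(2s-2t_0-1)n}{2b},\frac{(s-t_0)n}{b}]$ does not contain exactly one integer. Its length is $|I|=\frac{n}{2b}$, and $1<\frac{n}{2b}<2$ (the lower bound because $n=p_1p_2p_3$ is odd with $2b\le n$, the upper bound by Lemma 3.10), so $I$ contains one or two integers; hence it contains exactly two. In particular some half-period contains exactly two integers, so the hypothesis of Lemma 3.11 is met (using $s\ge 8$), and one of the configurations (c15)--(c60) describes a block of integers all lying in $\Omega$.

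The principal step is then to eliminate each of (c15)--(c60), imitating the treatment of (c1) in Lemma 3.9 and of (c8) in Lemma 3.10. In any configuration every named integer lies in $\Omega$, hence by assumption (B) is not coprime to $n$, hence (as $\gcd(n,6)=1$) is divisible by one of the three primes $p_1,p_2,p_3$ with $n=p_1p_2p_3$, all $\ge 5$. Two named integers at distance $\delta$ have a common factor dividing $\delta$, so they share a prime divisor of $n$ only if some prime $\ge 5$ divides $\delta$; in particular a run of three consecutive named integers is pairwise coprime and therefore already uses all three primes of $n$. Matching each named integer to a covering prime, which pins that prime to one residue class, and counting how many named positions can lie in a single class modulo $5$, $7$, $11$, one finds in every configuration that some named integer cannot be covered -- i.e.\ is coprime to $n$ -- contradicting (B). For example, in (c15) the seven integers $x,x+3,x+4,x+7,x+8,x+11,x+12$ lie in a window of $13$ consecutive integers in which every prime $\ge 7$ divides at most two of them and no three of the seven positions are congruent modulo $5$, so at most $2+2+2=6$ of the seven can be non-coprime to $n$; the seventh is coprime, a contradiction. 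Carrying this through for all $46$ configurations rules out two integers in $I$, so $I$ contains exactly one integer.

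The main obstacle is purely the size of the enumeration: Lemma 3.11 produces $46$ configurations, and although each falls to a short divisibility argument, the configurations with only five or six named integers cannot be dispatched by the crude bound $2+2+2$ and need the finer matching argument of the type used for (c1) in Lemma 3.9 (forcing the three primes of $n$ onto a short run, then using distance constraints modulo the small primes to leave one named integer uncovered). To avoid writing out all $46$ cases I would first try to prove a single uniform statement -- roughly, that in a window of at most $13$ consecutive integers whose $\Omega$-positions have the run-structure imposed by each half-period carrying at most two integers, no three prime divisors $\ge 5$ of $n$ can cover every $\Omega$-position -- and apply it to (c15)--(c60) at once; failing that, I would grind through the cases as in Lemmas 3.7--3.11. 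A secondary point requiring care is the index bookkeeping: one must check that the half-period $I$ above actually falls under the hypothesis of Lemma 3.11 and that realizing a configuration does not move named integers out of $\Omega$, which is where the assumption $s\ge 8$ is genuinely used, since it supplies enough consecutive half-periods to form windows of length up to $13$.
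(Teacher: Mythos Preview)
Your approach is correct and matches the paper's: the paper merely states that the proof of Lemma 3.12 is ``similar to that of Lemma 3.10'', meaning one assumes some half-interval carries two integers, invokes Lemma 3.11 to land in one of the configurations (c15)--(c60), and then eliminates each by exactly the prime-covering arguments you outline. Your worked example for (c15) and your observation that the five- and six-element configurations require the finer matching argument rather than the crude $2+2+2$ bound are accurate; the only work remaining is the case grind, and there is no hidden obstacle.
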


\begin{lemma}
If $s\geq10$, then one of the following holds for some $x$:

(c61) $x,x+3,x+6,x+9,x+12\in\Omega$;  (c62) $x,x+2,x+5,x+8,x+11\in\Omega$;

(c63) $x,x+3,x+5,x+8,x+11\in\Omega$;  (c64) $x,x+3,x+6,x+8,x+11\in\Omega$;

(c65) $x,x+3,x+6,x+9,x+11\in\Omega$;  (c66) $x,x+2,x+4,x+7,x+10\in\Omega$;

(c67) $x,x+2,x+5,x+7,x+10\in\Omega$;  (c68) $x,x+2,x+5,x+8,x+10\in\Omega$;

(c69) $x,x+3,x+5,x+7,x+10\in\Omega$;  (c70) $x,x+3,x+5,x+8,x+10\in\Omega$;

(c71) $x,x+3,x+6,x+8,x+10\in\Omega$;  (c72) $x,x+2,x+4,x+6,x+9\in\Omega$;

(c73) $x,x+2,x+4,x+7,x+9\in\Omega$;  (c74) $x,x+2,x+5,x+7,x+9\in\Omega$;

(c75) $x,x+3,x+5,x+7,x+9\in\Omega$;  (c76) $x,x+2,x+4,x+6,x+8\in\Omega$.
\end{lemma}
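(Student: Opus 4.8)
The plan is to proceed exactly as in the proofs of Lemmas 3.7, 3.9 and 3.11: the hypothesis on $s$ says how many of the intervals $J_t:=[\tfrac{(2s-2t-1)n}{2b},\tfrac{(s-t)n}{b}]$ are available, Lemma 3.12 pins down the number of integers each of them contains, and then the possible configurations of these integers are simply enumerated. Since $s\geq 10$ one has $\lfloor\tfrac{s}{2}\rfloor-1\geq 4$, so the parameter $t$ runs at least over $\{0,1,2,3,4\}$; and since $s\geq 8\geq 6$ both Lemma 3.12 and Lemma 3.10 apply. Hence each $J_t$, $t=0,1,2,3,4$, contains exactly one integer $m_t$, all five of which lie in $\Omega$, and $m_0>m_1>m_2>m_3>m_4$ because $J_t$ moves to the left as $t$ increases. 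Writing $L:=\tfrac{n}{2b}$, Lemma 3.10 gives $1<L<2$, and $J_t=[(2s-2t-1)L,(2s-2t)L]$ has length $L$ with consecutive left endpoints differing by $2L$, so the region between $J_{t+1}$ and $J_t$ is the open interval $\bigl((2s-2t-2)L,(2s-2t-1)L\bigr)$, again of length $L<2$.

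The key step will be to show that $m_t-m_{t+1}\in\{2,3\}$ for $t=0,1,2,3$. For the lower bound, $m_t\geq(2s-2t-1)L$ and $m_{t+1}\leq(2s-2t-2)L$ force $m_t-m_{t+1}\geq L>1$, hence $m_t-m_{t+1}\geq 2$. For the upper bound, take an integer $j$ with $m_{t+1}<j<m_t$: if $j\leq(2s-2t-2)L$ then $j\in\bigl((2s-2t-3)L,(2s-2t-2)L\bigr]\subseteq J_{t+1}$, contradicting that $m_{t+1}$ is the unique integer of $J_{t+1}$, whereas if $j\geq(2s-2t-1)L$ then $j\in\bigl[(2s-2t-1)L,(2s-2t)L\bigr)\subseteq J_t$, contradicting the uniqueness of $m_t$ in $J_t$. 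So every such $j$ lies in the open interval $\bigl((2s-2t-2)L,(2s-2t-1)L\bigr)$ of length $L<2$, which contains at most two integers; thus $m_t-m_{t+1}-1\leq 2$, i.e. $m_t-m_{t+1}\leq 3$.

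It will then remain only to enumerate. Setting $x:=m_4$ and $d_1:=m_3-m_4$, $d_2:=m_2-m_3$, $d_3:=m_1-m_2$, $d_4:=m_0-m_1$, each of which lies in $\{2,3\}$ by the key step, one gets
\[
\{m_4,m_3,m_2,m_1,m_0\}=\{x,\,x+d_1,\,x+d_1+d_2,\,x+d_1+d_2+d_3,\,x+d_1+d_2+d_3+d_4\}\subseteq\Omega,
\]
and running through the $2^4=16$ choices of $(d_1,d_2,d_3,d_4)$ reproduces precisely the sixteen configurations (c61)--(c76). I expect the only point needing care to be the endpoint bookkeeping in the $\{2,3\}$-gap argument (open versus closed intervals, and the non-integrality of $L$, which follows from $1<L<2$); there should be no real obstacle. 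Note, finally, that --- like Lemmas 3.7, 3.9 and 3.11 --- this is a pure enumeration: no coprimality is used and all $2^4$ gap patterns genuinely survive. The standing hypothesis $(B)$, that $J_t$ contains no integer coprime to $n$, will be invoked only afterwards, to eliminate the cases (c61)--(c76) one at a time.
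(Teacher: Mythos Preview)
Your proposal is correct and is exactly the approach the paper intends: the paper does not write out a proof but simply says ``the proof of Lemma 3.11 and Lemma 3.13 is similar to that of Lemma 3.7,'' and you have unpacked precisely what that means here (use Lemma~3.12 to get one integer per interval, Lemma~3.10 for $1<\tfrac{n}{2b}<2$, deduce gaps in $\{2,3\}$, and enumerate the $2^4=16$ patterns, which match (c61)--(c76) verbatim).
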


\begin{lemma}
$s\leq9$.
\end{lemma}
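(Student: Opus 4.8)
The plan is to proceed by contradiction: assume $s \geq 10$. By Lemma 3.12 there is an integer $x$ for which one of the sixteen configurations (c61)--(c76) holds, and in each case this exhibits five elements of $\Omega$ contained in a window of width at most $12$. We are in the situation $n = p_1 p_2 p_3$ with three distinct primes, and since $\gcd(n,6)=1$ each $p_i \geq 5$. By the standing assumption $(B)$, none of the five displayed integers is coprime to $n$, so each of them is divisible by one of $p_1, p_2, p_3$.

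The basic mechanism is the one already used in Lemmas 3.7 and 3.9: if $u,v$ are two of the five integers then $\gcd(u,v) \mid u-v$ with $0 < |u-v| \leq 12$, so a prime of $n$ dividing both must lie in $\{5,7,11\}$, and a single prime of $n$ can divide three of the five only if those three form an arithmetic progression $y, y+5, y+10$ (forcing that prime to be $5$). In particular, any two of the five integers whose difference is not a multiple of $5$, $7$, or $11$ are divisible by \emph{distinct} primes of $n$. For each configuration one first singles out a subset of the five integers that are pairwise coprime relative to $n$; such a subset is forced to be divided by pairwise distinct primes among $p_1,p_2,p_3$, and this, together with the requirement that any further coincidence of prime divisors occur only across a difference divisible by $5$, $7$, or $11$, determines (or narrows to one or two possibilities) the prime of $n$ dividing each remaining integer. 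In the generic configuration the bookkeeping closes: one of the five integers is then forced to be divisible by none of $p_1, p_2, p_3$, contradicting $(B)$.

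The delicate cases are those configurations whose gap pattern mixes $2$'s and $3$'s so that a pair at distance $5$ and a pair at distance $7$ are both present (in particular (c67)--(c70), which contain a full progression $y,y+5,y+10$); for these the purely local analysis can leave one branch open, in which two of $p_1,p_2,p_3$ are forced to be $5$ and $7$ while the third is locally unconstrained. To eliminate such branches I would bring in the global hypotheses carried by $(A2)$--$(A4)$: under each of them at least one of $a$, $b$, $c$ (respectively $a-1$) is divisible by a product of two of the $p_i$, hence is at least $n/p_j$ for the third prime $p_j$; combining this with $a \geq 35$ (Remark 3.2), $b < \tfrac{n}{2}$, and $b \geq sa \geq 10a$ forces $p_j$ to be far larger than $7$ and pins down enough of the arithmetic of $(n,a,b,c)$ to contradict the surviving branch. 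The main obstacle, and the place where the proof will need the most care, is precisely this last step: closing the residual branches \emph{uniformly} rather than treating each of (c61)--(c76) and each of its sub-branches separately, and verifying that the global size estimates really do rule out every surviving possibility. Once that is done, $s \geq 10$ is impossible and $s \leq 9$.
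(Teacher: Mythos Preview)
Your overall plan matches the paper's: assume $s\ge 10$, invoke the configuration list (this is Lemma~3.13, not~3.12), and for each configuration derive a contradiction from assumption~$(B)$. The paper itself says only that the proof is ``similar to that of Lemma~3.10'' and gives no further detail, so your case-by-case pigeonhole via prime divisors of the pairwise differences is exactly the intended mechanism.

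You are also right that the Lemma~3.10-style argument does not close every configuration on its own. Patterns whose total span is $10$---notably (c67)--(c70)---admit the assignment $5\mid x$, so that the single prime $5$ covers $x,x+5,x+10$ and the remaining two primes of $n$ pick up the two leftover elements; a few others (e.g.\ (c63), via $5\mid x+3$ and $11\mid x$) have analogous surviving branches. So an extra ingredient is genuinely needed.

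Where I would redirect you is in the choice of that ingredient. You propose the divisibility structure of $a,b,c$ from (A2)--(A4); this yields, for instance, that the prime not dividing $a$ must exceed $20$, but it is not clear this alone ``pins down enough of the arithmetic'' to finish, and you concede that step is unverified. A more direct route---and one much closer to how the paper argues elsewhere---is to use that every element of $\Omega$ lies below $\tfrac{sn}{b}<4s$. In a residual branch the two primes other than $5$ must each divide a specific small element of $\Omega$; combined with $5\mid x$ this pins $x$ to one or two values and forces $n=5p_2p_3$ to be explicit and small. For instance, in (c67) with $s=10$ one gets $x=15$, whence $p_2\mid 17$, $p_3\mid 22$, and $n=5\cdot 11\cdot 17=935<1000$, contradicting the standing hypothesis $n>1000$. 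This mechanism disposes of the residual branches uniformly and without any appeal to (A2)--(A4); I would rework your final step along these lines rather than through the structure of $a,b,c$.
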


The proof of Lemma 3.11 and Lemma 3.13 is similar to that of Lemma 3.7, and the proof of Lemma 3.12 and Lemma 3.14 is similar to that of Lemma 3.10.

In view of Lemmas 3.5 and 3.14,  from now on we may always assume that $s\leq9$.

Let $k_1$ be the largest positive integer such that $\lceil\frac{(k_1-1)n}{c}\rceil=\lceil\frac{(k_1-1)n}{b}\rceil$ and $\frac{k_1n}{c}\leq m<\frac{k_1n}{b}$. Since $\frac{bn}{c}\leq n-1<n=\frac{bn}{b}$ and $\frac{tn}{b}-\frac{tn}{c}=\frac{t(c-b)n}{bc}>2$ for all $t\geq b$, such integer $k_1$ always exists and $k_1\leq b$.

\begin{lemma}
Suppose $\lceil\frac{n}{c}\rceil=\lceil\frac{n}{b}\rceil$, then $k_1\leq\frac{b}{a}$.
\end{lemma}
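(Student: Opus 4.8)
The plan is to reduce the ceiling hypothesis to the single inequality $bc>n(a-1)$, use it to force $s:=\lfloor b/a\rfloor\ge 2$, and then bound $k_1$ by comparing the length of an interval with $1$. First, since $\lceil n/c\rceil=\lceil n/b\rceil$, the half-open interval $[n/c,n/b)$ contains no integer, so its length $\frac{n(a-1)}{bc}$ is less than $1$; that is, $bc>n(a-1)$. Combining this with $c<n/2$ and $c=a+b-1$ gives $b(a+b-1)=bc>n(a-1)>2(a+b-1)(a-1)$, hence $b>2(a-1)$, so $b\ge 2a-1$ and in particular $s\ge 1$.

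Next I would rule out $s=1$. If $s=1$ then $b<2a$, which together with $b\ge 2a-1$ forces $b=2a-1$ and $c=3a-2$. Let $m_1$ denote the common value $\lceil n/b\rceil=\lceil n/c\rceil$. From $n/c>m_1-1$ and $n/b\le m_1$ we get $(m_1-1)(3a-2)<m_1(2a-1)$, which simplifies to $(m_1-3)(a-1)<1$; since $a\ge 35$ this gives $m_1\le 3$, while $c<n/2$ forces $m_1=\lceil n/c\rceil\ge 3$, so $m_1=3$. Then $n\le 3b=6a-3$ and $n>2c=6a-4$ force $n=6a-3=3(2a-1)$, contradicting $\gcd(n,6)=1$. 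Hence $s\ge 2$.

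For the bound on $k_1$, note that $\lceil x\rceil\ge x$ and $\lceil y\rceil<y+1$ for all real $x,y$, so for every $k\ge 2$
\[
\Big\lceil\frac{(k-1)n}{b}\Big\rceil-\Big\lceil\frac{(k-1)n}{c}\Big\rceil>\frac{(k-1)n(a-1)}{bc}-1 .
\]
When the left-hand side vanishes — which is the property defining the admissible values of $k$, and in particular holds for $k=k_1$ — this forces $k-1<\frac{bc}{n(a-1)}$, so $k_1-1<\frac{bc}{n(a-1)}$. Now $b<(s+1)a$ by the definition of $s$ and $c<n/2$, whence $bc<(s+1)a\cdot\frac n2$; and for $s\ge 2$ and $a\ge 35$ the inequality $(s+1)a<2s(a-1)$ holds, since the difference $a(s-1)-2s$ is increasing in $s$ and already positive at $s=2$. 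Therefore $bc<s(a-1)n$, so $k_1-1<\frac{bc}{n(a-1)}<s$, that is $k_1\le s\le b/a$.

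The only place where the arithmetic of $n$ is really used is the elimination of $s=1$: the length estimate only becomes conclusive once $\frac{bc}{n(a-1)}<s$ is in hand, and this collapses precisely in the borderline configuration $b=2a-1$, $c=3a-2$, which the inequality $bc>n(a-1)$ alone cannot exclude — it has to be killed using $\gcd(n,6)=1$ (equivalently $3\nmid n$). I expect that to be the main obstacle; everything else is routine estimation from $c=a+b-1$, $b,c<n/2$, $a\ge 35$, and $s\ge 2$.
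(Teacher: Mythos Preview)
Your proof is correct and uses essentially the same engine as the paper: the ceiling equality $\lceil (k_1-1)n/c\rceil=\lceil (k_1-1)n/b\rceil$ forces $(k_1-1)\,n(a-1)/(bc)<1$, and the borderline configuration $b=2a-1$, $c=3a-2$ is killed using $\gcd(n,6)=1$. The only real difference is organizational: the paper argues by contradiction, splits into $k_1\ge 3$ versus $k_1=2$, and in the latter case lands at $n=6a-1$ before recomputing the ceilings, whereas you first prove $s\ge 2$ (your $m_1=3$ argument gives $n=6a-3$ directly, which is a slightly cleaner endgame) and then bound $k_1\le s$ in one stroke via $bc<(s+1)a\cdot\tfrac{n}{2}<s(a-1)n$.
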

\begin{proof}
Since $\lceil\frac{n}{c}\rceil=\lceil\frac{n}{b}\rceil$, we have $k_1\geq2$. Assume on the contrary that $k_1>\frac{b}{a}.$

Since $\lceil\frac{(k_1-1)n}{c}\rceil=\lceil\frac{(k_1-1)n}{b}\rceil$, we have
\beq 1>\frac{(k_1-1)n}{b}-\frac{(k_1-1)n}{c}=\frac{(c-b)(k_1-1)n}{cb}.\eeq

If $a-1\geq\frac{b}{k_1}$, then $\frac{(c-b)(k_1-1)n}{cb}=\frac{(a-1)(k_1-1)n}{cb}\geq\frac{(k_1-1)}{k_1}\times\frac{n}{c}>1$, contradiction. Thus we have that $\frac{b}{k_1}+1>a>\frac{b}{k_1}$ and $\frac{b}{k_1}$ is not an integer.

If  $k_1\geq 3$. Since $a\geq35$, we have  $\frac{(c-b)(k_1-1)n}{cb}=\frac{a-1}{a}\times\frac{a}{b}\times\frac{(k_1-1)n}{c}\geq\frac{34}{35}\times\frac{3-1}{3}\times2>1$, contradiction.

If $k_1=2$. $b<2a<b+2$, thus $b$ is an odd number, we may assume $b=2l+1$. Then $a=l+1$ and $c=3l+1$. (4.6) implies that $n<6l+5+\frac{1}{l}$. Moreover, $\gcd(n,6)=1$, by $6l+2<n\leq 6l+5$, we infer that  $n=6l+5$. Thus
\beqs \Big\lceil\frac{n}{c}\Big\rceil=\Big\lceil\frac{6l+5}{3l+1}\Big\rceil=3<4=\Big\lceil\frac{6l+5}{2l+1}\Big\rceil=\Big\lceil\frac{n}{b}\Big\rceil,\eeqs contradiction.
\end{proof}

Then we can show that Proposition 3.1 holds through the following two propositions.

\begin{proposition}
Suppose $\lceil\frac{n}{c}\rceil<\lceil\frac{n}{b}\rceil$, then Proposition 3.1 holds.
\end{proposition}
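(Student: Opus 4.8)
The plan is to exploit the criterion in Lemma 3.2(1): it suffices to produce positive integers $k,m$ with $\frac{kn}{c}\le m\le\frac{kn}{b}$, $\gcd(m,n)=1$, $1\le k\le b$, and $ma<n$. The hypothesis $\lceil\frac{n}{c}\rceil<\lceil\frac{n}{b}\rceil$ means that already for $k=1$ the interval $[\frac{n}{c},\frac{n}{b}]$ has length $\frac{n}{b}-\frac{n}{c}=\frac{(c-b)n}{bc}=\frac{(a-1)n}{bc}$ and straddles an integer, i.e.\ contains at least one integer $m_0$; more precisely $\lceil\frac{n}{c}\rceil\le m_0\le\lfloor\frac{n}{b}\rfloor$ with $m_0\le\frac{n}{b}$, so $m_0b\le n$ and hence $m_0a\le m_0 b\cdot\frac{a}{b}<n$ as soon as $a<b$, which holds since $a\le b$ and the degenerate case $a=b$ (forcing $c=2a-1$) can be checked directly or ruled out by Remark 3.2 together with $n>1000$. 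Thus the only obstruction to applying Lemma 3.2(1) with $k=1$ is the coprimality condition $\gcd(m,n)=1$.

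So the heart of the argument is: if \emph{every} integer in $[\frac{n}{c},\frac{n}{b}]$ shares a factor with $n=p_1p_2p_3$, derive a contradiction. First I would bound the number of integers in this interval: since $s=\lfloor b/a\rfloor\le 9$ (established before the proposition) and $\frac{n}{b}-\frac{n}{c}=\frac{(a-1)n}{bc}$, together with $n=p_1p_2p_3$ and the size constraints $a\ge 35$, $b\le n/2$, one gets a concrete upper bound on $\frac{n}{b}-\frac{n}{c}$, hence on the number of consecutive integers that must all be non-coprime to $n$. Then the same combinatorial trick used repeatedly in Lemmas 3.6--3.10 applies: among sufficiently many consecutive integers, the pattern of which prime $p_i$ divides which integer is over-constrained (two integers divisible by the same $p_i$ must be $\ge p_i\ge 5$ apart, and consecutive integers are coprime), forcing one of the integers to be coprime to $n$ after all.

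Concretely, I expect the interval $[\frac{n}{c},\frac{n}{b}]$ to contain at most three or four integers in the worst case; if it contains $x,x+1$ then one of them, being coprime to the other, can only be divisible by primes $\ge$ the other's, and chasing the divisibility relations $p_i\mid x+j$ for the few available $j$ quickly exhausts the three primes $p_1<p_2<p_3$ (all $\ge5$). The cases where the interval is short (one or two integers) need separate treatment: there one uses that $k=1$ may be replaced by a slightly larger $k$ — the largest $k_1$ with $\lceil\frac{(k_1-1)n}{c}\rceil=\lceil\frac{(k_1-1)n}{b}\rceil$, as in Lemma 3.16 — to widen the interval $[\frac{k n}{c},\frac{k n}{b}]$ while keeping $k\le b$ and $ka\le kb<n$ under control, and then reapply the counting argument.

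The main obstacle will be the bookkeeping when $[\frac{n}{c},\frac{n}{b}]$ is very short (contains only one or two integers that happen to be non-coprime to $n$): there I cannot immediately invoke the many-consecutive-integers trick, and must instead argue that increasing $k$ (using $k_1$ from Lemma 3.16, whose bound $k_1\le b/a$ keeps $ka<n$) eventually produces an interval long enough to contain a coprime integer, or else that the resulting rigidity on $n$'s prime factorization contradicts $\gcd(n,6)=1$ and $n>1000$. Handling the interaction between "$k$ small enough that $ka<n$" and "$k$ large enough that the interval contains a coprime integer" is the delicate point, and I would organize it by splitting on the value of $s\in\{1,\dots,9\}$ exactly as the preceding lemmas do.
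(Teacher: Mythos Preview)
Your overall strategy---apply Lemma~3.3(1), check the interval $[\tfrac{kn}{c},\tfrac{kn}{b}]$ for an integer coprime to $n$, and increase $k$ when the interval is too short---is the same route the paper takes. But there are two genuine gaps.

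First, your mechanism for enlarging $k$ is wrong. You invoke the integer $k_1$ from the paragraph before Lemma~3.15, but under the present hypothesis $\lceil\tfrac{n}{c}\rceil<\lceil\tfrac{n}{b}\rceil$ one has $k_1=1$ (the defining condition $\lceil\tfrac{(k_1-1)n}{c}\rceil=\lceil\tfrac{(k_1-1)n}{b}\rceil$ already fails for $k_1-1=1$), so it gives nothing. Your claim that ``$ka\le kb<n$'' for the enlarged $k$ is also false in general. The paper uses a different auxiliary integer: after reducing (via Lemma~4.1) to the case where $[\tfrac{n}{c},\tfrac{n}{b}]$ contains a \emph{single} integer $m_1$ with $\gcd(m_1,n)>1$, it takes $l$ to be the least integer for which $[\tfrac{ln}{c},\tfrac{ln}{b})$ contains at least four integers, shows $l<b$ and that the interval has at most six integers (because $N_{j+1}-N_j\le 2$), and then uses the fact that with only three prime factors any four integers in such a window must include one coprime to $n$. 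The bound $ma<n$ is then checked from $(lm_1-j_0)(a-1)<6b$.

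Second, you underestimate the hard cases. The estimate $ma<n$ only goes through directly when $m_1\ge 10$ (Lemma~4.4). For $m_1=5$ and $m_1=7$---the only remaining possibilities since $\gcd(m_1,n)>1$, $\gcd(n,6)=1$ force $m_1\in\{5,7\}\cup[10,\infty)$---the paper needs lengthy case-by-case computations (Lemmas~4.2 and~4.3), each running through many subcases on $\tfrac{jn}{c},\tfrac{jn}{b}$ for small $j$ and sometimes pinning $n$ down to a specific product of three primes below $1000$. Your sketch (``chasing divisibility relations quickly exhausts the three primes'') does not cover this; the combinatorial pigeonholing on $\Omega$ from Lemmas~3.6--3.10 is not what is used here.
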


\begin{proposition}
Suppose $\lceil\frac{n}{c}\rceil=\lceil\frac{n}{b}\rceil$ and $k_1\leq\frac{b}{a}$, then Proposition 3.1 holds.
\end{proposition}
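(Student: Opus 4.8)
The plan is to exploit the arithmetic constraint $\lceil n/c\rceil=\lceil n/b\rceil$ together with the hypothesis $k_1\le b/a$ in order to verify condition (1) of Lemma 3.3. Recall that $k_1$ is defined as the largest positive integer with $\lceil (k_1-1)n/c\rceil=\lceil (k_1-1)n/b\rceil$ and such that the interval $[k_1n/c, k_1n/b)$ is nonempty; since $\lceil n/c\rceil=\lceil n/b\rceil$ we automatically have $k_1\ge 2$. The first step is to locate an integer $m$ with $k_1n/c\le m\le k_1n/b$ and $\gcd(m,n)=1$: because $k_1$ is \emph{largest} with the stated ceiling property, the interval $[(k_1)n/c,(k_1)n/b)$ must actually contain an integer (otherwise $k_1$ could be replaced by $k_1+1$ contradicting maximality, after checking the ceiling condition propagates), and in fact I expect one shows this interval has length $>1$ or else directly pins down $m=\lceil k_1 n/c\rceil$. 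One then needs $\gcd(m,n)=1$; here the reducedness hypothesis and the assumptions $(A2)/(A3)/(A4)$ on $\gcd(c,n),\gcd(b,n)$ force the relevant small interval near $k_1n/b$ to avoid all three primes $p_1,p_2,p_3$ simultaneously only in degenerate cases, which are excluded by $n>1000$ and $a\ge 35$ (Remark 3.2). This is the same mechanism used throughout Section 3.

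The second step is to check the remaining requirements of Lemma 3.3(1): $1\le k_1\le b$ and $m a<n$. The bound $k_1\le b$ is immediate from the definition of $k_1$. For $ma<n$: since $m\le k_1 n/b$ we get $ma\le k_1 na/b$, and the hypothesis $k_1\le b/a$ gives exactly $k_1 a/b\le 1$, hence $ma\le n$; the inequality is strict unless $m=k_1n/b$ and $k_1a=b$ simultaneously, a boundary case one disposes of by a parity or divisibility argument (as in the $k_1=2$ analysis inside the proof of Lemma 3.15, where $\gcd(n,6)=1$ rules out the extremal configuration). Once $ma<n$ is secured, Lemma 3.3(1) applies and Proposition 3.1 holds.

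The main obstacle I anticipate is the coprimality claim $\gcd(m,n)=1$ for the specific $m$ produced near $k_1n/b$. Unlike the generic-$m$ arguments earlier in the section, here $m$ is essentially forced, so one cannot simply slide to a neighboring integer; instead one must show that the forced value is not divisible by any of $p_1,p_2,p_3$, using that $c,b,a$ already carry prescribed prime factors under $(A2)$–$(A4)$ and that $(px_1g)\cdots(px_4g)$ is not minimal zero-sum for any $p\in\ct$. I would handle this by translating "$p\mid m$" into a congruence forcing $p$ to divide a combination of $a,b,c$ that contradicts the prescribed gcd pattern, treating the three cases $(A2),(A3),(A4)$ separately but in parallel. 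A secondary, more routine obstacle is the careful bookkeeping showing the interval $[k_1n/c,k_1n/b]$ genuinely contains the integer $m$ we want — this is where maximality of $k_1$ and the estimate $tn/b-tn/c>2$ for $t\ge b$ must be combined to pin down the location and length of the interval. If the interval should turn out to contain two or more integers, at least one is coprime to $n$ (there being only three prime divisors and consecutive integers being coprime), which would finish things immediately; the delicate case is precisely when it contains exactly one.
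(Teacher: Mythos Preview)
Your reduction to Lemma~3.3(1) is correct up to the coprimality step: the paper itself opens Section~5 by observing that, since $k_1\le b/a$ gives $a\le b/k_1$, Lemma~3.4 (equivalently Lemma~3.3(1)) finishes the proof immediately \emph{provided} some integer in $[k_1 n/c,\,k_1 n/b)$ is coprime to $n$. So your second step is fine and is exactly how the paper begins.

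The genuine gap is your first step. Your plan to force $\gcd(m,n)=1$ by ``translating $p\mid m$ into a congruence forcing $p$ to divide a combination of $a,b,c$ that contradicts the prescribed gcd pattern'' does not work: the single integer $m=\lceil k_1 n/c\rceil$ carries no useful congruence relation to $a,b,c$ beyond the trivial inequalities, and nothing in $(A2)$--$(A4)$ or in the reducedness hypothesis prevents $m$ from being divisible by one (or even two) of $p_1,p_2,p_3$. The paper takes exactly the opposite stance: it \emph{assumes} every integer in $[k_1 n/c,\,k_1 n/b)$ fails to be coprime to $n$, and then the entirety of Section~5 (Lemmas~5.1--5.8) is devoted to handling this bad case. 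The method is a long case split on $k_1\in\{2,3,4,5,6\}$ (after bounding $k_1\le 6$ via Lemma~5.1 and the earlier bounds $s\le 9$, $n/(2b)<4$), and within each value of $k_1$ a further split on $s$ and on which subinterval of $(2,\,n/b)$ contains $n/c$. In each subcase one passes to a \emph{different} multiple $k>k_1$ so that $[kn/c,\,kn/b)$ is forced to contain an integer coprime to $n$ (often using $\gcd(n,6)=1$ and $n>1000$ to rule out small prime products), and then checks $ma<n$ by an explicit numerical estimate---the clean bound $k\le b/a$ is lost once $k>k_1$, so each inequality must be verified by hand. In a handful of residual subcases even this fails and one invokes Lemma~3.3(2) with a specific $M$ (e.g.\ $M=13,17,23$), or pins $n$ down to a single explicit product $p_1p_2p_3$ and checks directly. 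Your proposal contains no mechanism for any of this, and the congruence idea you sketch cannot substitute for it.
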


\section{Proof of Proposition 3.16}

\begin{lemma}
If $[\frac{n}{c},\frac{n}{b}]$ contains at least two integers, then $\ind(S)=1$.
\end{lemma}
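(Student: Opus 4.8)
The plan is to produce a single generator $mg$ of $G$ with $\|S\|_{mg}=1$. By part~(1) of Lemma~3.2 it is enough to exhibit an integer $m$ with $\gcd(m,n)=1$, $\frac{n}{c}\le m\le\frac{n}{b}$ and $ma<n$; I will check the last requirement is automatic, so the whole problem reduces to locating an integer coprime to $n$ inside $[\frac{n}{c},\frac{n}{b}]$.

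First, why such an $m$ suffices. For $m$ coprime to $n$ with $\frac{n}{c}\le m\le\frac{n}{b}$: since $b\ge a\ge 35$ we have $0<m<n$, so $|m|_n=m$; from $m\le\frac{n}{b}$ and $a\le b$ we get $ma\le n$, and $ma=n$ is impossible because $\gcd(m,n)=1$ forces $n\mid a$, hence $0<ma<n$ and $|m(n-a)|_n=n-ma$; likewise $0<mb<n$ and $|m(n-b)|_n=n-mb$; and $n<mc<2n$ — the lower bound strict since $\gcd(m,n)=1$ forbids $n\mid mc$, the upper one because $c=a+b-1\le 2b-1$ — so $|mc|_n=mc-n$. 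Therefore, using $1+c=a+b$, $\|S\|_{mg}=\frac{m+(mc-n)+(n-mb)+(n-ma)}{n}=\frac{m(1+c-a-b)+n}{n}=1$. In particular $ma<n$, so part~(1) of Lemma~3.2 applies the moment such an $m$ exists.

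It remains to find an integer coprime to $n$ in $[\frac{n}{c},\frac{n}{b}]$, knowing the interval contains at least two integers. Since $\gcd(n,6)=1$, $n=p_1p_2p_3$ with each $p_i\ge5$, so each $p_i$ divides at most one of any four consecutive integers, and consequently among any four consecutive integers at most three share a factor with $n$. Hence if $[\frac{n}{c},\frac{n}{b}]$ contains four or more integers we are done, and we may assume it contains exactly two or three integers, all of which share a prime factor with $n$.

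This last situation is the heart of the matter, and I would handle it by passing to $k=2$. Pick consecutive integers $m,m+1$ in $[\frac{n}{c},\frac{n}{b}]$; then $2m,2m+1,2m+2$ all lie in $[\frac{2n}{c},\frac{2n}{b}]$, and since $m,m+1$ are coprime, the primes $p,q$ dividing them are distinct, while $2m+1\equiv1\pmod p$ and $2m+1\equiv-1\pmod q$, so $2m+1$ is divisible by neither $p$ nor $q$. If the third prime $r$ of $n$ also misses $2m+1$, then $2m+1$ is coprime to $n$: when $s=\lfloor\frac ba\rfloor\ge2$ one has $a\le\frac b2$, hence $(2m+1)a\le\frac{2na}{b}\le n$ and $<n$ by coprimality, so part~(1) of Lemma~3.2 (with $k=2$) gives $\ind(S)=1$; when $s=1$ one instead has $2m+1\le\frac{2n}{b}\le\frac n2$, so $M=2m+1$ is a legitimate candidate for part~(2) of Lemma~3.2, and a direct computation of $|Ma|_n,|Mb|_n,|Mc|_n$ from the location of $m$ in $[\frac nc,\frac nb]$ should show at least two of $|Ma|_n>\frac n2$, $|Mb|_n>\frac n2$, $|Mc|_n<\frac n2$ hold. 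The genuinely hard sub-case is that $r\mid2m+1$ as well: then $m$ and $m+1$ — hence $2m+1$ — are pinned simultaneously modulo $p_1p_2p_3$, and here one must invoke the standing reduced hypothesis that $(pg)\cdot(pcg)\cdot(p(n-b)g)\cdot(p(n-a)g)$ is not a minimal zero-sum sequence for each $p\mid n$ — which, by running through its proper subsequences, forces congruences such as $\frac{n}{p_i}\mid a-1$ or $\frac{n}{p_i}\mid b-1$ under $(A2)$ and $(A3)$ — and check that such congruences cannot coexist with two or three consecutive multiples of $p_1,p_2,p_3$ packed into so short an interval. I expect this sub-case, together with verifying the part~(2) inequalities, to carry essentially all the difficulty; everything else is the routine calculation above.
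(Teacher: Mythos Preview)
Your first paragraph is correct and matches the paper's opening move: if some $m\in[\tfrac nc,\tfrac nb]$ is coprime to $n$, then $ma\le\tfrac nb\cdot a\le n$ (strict by coprimality), so Lemma~3.3(1) applies with $k=1$. The four-consecutive-integers observation is fine too.

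But the two places you yourself flag as ``the difficulty'' are genuine holes, not routine checks. For $s=1$ you switch to Lemma~3.3(2) with $M=2m+1$ and only assert the inequalities ``should'' hold. For the case $r\mid 2m+1$ you propose to invoke the reduced hypothesis; that hypothesis yields relations of the shape $|p_ix_j|_n+|p_ix_k|_n=n$, and you give no mechanism for turning these into a contradiction with the single congruence $r\mid 2m+1$. Neither sub-case is carried out.

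The paper sidesteps both issues by extracting a \emph{size} bound instead of pushing congruences further. Two consecutive integers $m_1,m_1+1$ each sharing a prime $\ge5$ with $n$ forces $m_1\ge10$ (for $m_1\le9$ only $5$ and $7$ have such a factor, and $6,8$ do not), hence $\tfrac nb>m_1+1\ge11$, i.e.\ $n\ge11b$. At level $k=2$ the paper then looks at the \emph{pair} $2m_1+1,\,2m_1+3$: both are coprime to the primes dividing $m_1$ and $m_1+1$ (reduce mod $p,q\ge5$), and the third prime $r\ge5$ divides at most one of them since they differ by $2$. So one of them, say $m$, is coprime to $n$; and from $(2m_1-2)c<2n<(2m_1+4)b$ one gets $(2m_1-2)(a-1)<6b$, whence
\[
ma\le(2m_1+3)a=\frac{2m_1+3}{2m_1-2}\cdot\frac{a}{a-1}\cdot(2m_1-2)(a-1)<\frac{23}{18}\cdot\frac{35}{34}\cdot6b<11b\le n,
\]
using $m_1\ge10$ and $a\ge35$. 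This works uniformly in $s$, so your case split $s\ge2$ versus $s=1$ is unnecessary, and the reduced hypothesis never enters. (A parallel sub-case with three integers in $[\tfrac nc,\tfrac nb]$ and $\tfrac nb=p_2$ is handled the same way with the sharper bound $m_1\ge21$.) The idea you are missing is precisely this: replace the search for a single coprime residue $2m+1$ by the pair $\{2m_1+1,2m_1+3\}$, and trade congruence chasing for the numerical bound $m_1\ge10$.
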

\begin{proof}
We prove this lemma under assumption $(A4)$. For $(A2)$ and $(A3)$, the proof is very similar.

Since $a\leq b$, by Lemma 3.4 we may assume every integer in $[\frac{n}{c},\frac{n}{b}]$ in not co-prime to $n$. Since $n=p_1p_2p_3$, and $p_1p_3|b$, it follows that $\frac{n}{b}\leq p_2$. Then one of the following holds:

\beq m_1-1<\frac{n}{c}\leq m_1<m_1+1< \frac{n}{b}=m_1+2=p_2,\\
m_1-1<\frac{n}{c}\leq m_1<m_1+1\leq \frac{n}{b}<m_1+2.\eeq

{\it For case} (4.1): We have that $\gcd(m_1,n)>1, \gcd(m_1+1,n)>1$ and $\gcd(n,6)=1$, we infer that $p_2\geq23$ and $\gcd(n,m_1+3)=1$, then  $m_1\geq21$ and $n\geq23b$. Note that
\beq 2m_1-2<\frac{2n}{c}<\frac{2n}{b}=2m_1+4<2m_1+5.\eeq

Let $m=2m_1+5$ and $k=2$. Since  $1+c=a+b$, by (4.3) we have $(2m_1-2)(b+a-1)<(2m_1+5)b$, and thus $(2m_1-2)(a-1)<7b$. Since $a\geq 35$ and $m_1\geq 21$, we have
\beqs ma=(2m_1+5)a=\frac{2m_1+5}{2m_1-2}\times\frac{a}{a-1}\times(2m_1-2)(a-1)<\frac{2\times21+5}{2\times21-2}\times\frac{35}{35-1}\times7b<23b\leq n,\eeqs
and the result holds.

{\it For case} (4.2):
We have that $\gcd(m_1,n)>1, \gcd(m_1+1,n)>1$ and $\gcd(n,6)=1$, we infer that $m_1\geq10$. Then $n\geq 11b$.
Since $n=p_1p_2p_3$, we have $\gcd(2m_1+1,n)=1$ or $\gcd(2m_1+3,n)=1$. Note that
\beq 2m_1-2<\frac{2n}{c}\leq2m_1<2m_1+1<2m_1+2\leq\frac{2n}{b}<2m_1+4.\eeq

 Since  $1+c=a+b$, by (4.4) we have $(2m_1-2)(b+a-1)<(2m_1+4)b$, and thus $(2m_1-2)(a-1)<6b$. Let $k=2$ $m\in\{2m_1+1,2m_1+3\}$ be such that $\gcd(m,n)=1$.  Since $a\geq 35$ and $m_1\geq 10$, we have
\beqs ma\leq(2m_1+3)a=\frac{2m_1+3}{2m_1-2}\times\frac{a}{a-1}\times(2m_1-2)(a-1)<\frac{2\times10+3}{2\times10-2}\times\frac{35}{35-1}\times6b=\frac{805b}{102}<n,\eeqs
and the result holds.
\end{proof}

By Lemma 4.1, we nay assume that $[\frac{n}{c},\frac{n}{b}]$ contains exactly one integer $m_1$, and thus
\beqs m_1-1<\frac{n}{c}\leq m_1<\frac{n}{b}<m_1+1.\eeqs
Consequently, $\frac{n}{b}-m_1<1$ and $m_1-\frac{n}{c}<1$.

\begin{lemma}
If $4<\frac{n}{c}\leq5<\frac{n}{b}<6$ and $5|n$, then $\ind(S)=1$.
\end{lemma}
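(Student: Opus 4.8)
The plan is to verify condition~(1) of Lemma~3.3: to produce positive integers $k,m$ with $\frac{kn}{c}\le m\le\frac{kn}{b}$, $\gcd(m,n)=1$, $1\le k\le b$ and $ma<n$. The naive candidate $k=1$, $m=m_1=5$ fails only because $\gcd(5,n)=5$, so the idea is first to squeeze far more out of ``$S$ is reduced'' than the bound $a\ge 35$.

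Since $5\mid n$, reducedness says that $S'=(5g)\cdot(5cg)\cdot(5(n-b)g)\cdot(5(n-a)g)$ is \emph{not} a minimal zero-sum sequence. Put $u=n-5b$ and $v=5c-n$; from $5<n/b$ and $n/c\le5$ these are multiples of $5$ with $u>0$, $v\ge0$, and $c=a+b-1$ yields $5a=u+v+5$. Every term of $S'$ lies in $\langle 5g\rangle\cong\mathbb Z/(n/5)$, where they read $1,\ \frac v5,\ \frac u5,\ \frac n5-a$ and sum to $n/5$. I would then show that if $v\ge5$ then $S'$ is a minimal zero-sum sequence, a contradiction: indeed $a<\frac n{12}+1<\frac n5$, $0<u<\frac n6$ and $5\le v<\frac n4$, so all four residues lie strictly in $(0,\frac n5)$, and using $a=\frac u5+\frac v5+1$ every proper nonempty partial sum simplifies to one of
\[
1+\tfrac v5,\quad 1+\tfrac u5,\quad \tfrac n5-(a-1),\quad a-1,\quad \tfrac n5-\big(\tfrac u5+1\big),\quad \tfrac n5-\big(\tfrac v5+1\big),\quad a,\quad \tfrac n5-1,\quad \tfrac n5-\tfrac u5,\quad \tfrac n5-\tfrac v5,
\]
each of which is again strictly between $0$ and $n/5$ (the inequality $a\ge35$ is used via $a-1\ge1$). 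Hence $v=0$, i.e. $c=n/5$.

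The rest is short. From $b>n/6$, $a=\frac n5-b+1<\frac n{30}+1$; from $s=\lfloor b/a\rfloor\le9$, $b<10a$, so $a>\frac b{10}$. If $\frac nb<\frac{16}{3}$ then $b>\frac{3n}{16}$, so $a<\frac n5-\frac{3n}{16}+1=\frac n{80}+1$ while $a>\frac{3n}{160}$; this forces $\frac n{160}<1$, contradicting $n>1000$. Thus $\frac nb\ge\frac{16}{3}$. Now take $k=3$, $m=16$: then $\frac{3n}{c}=15\le16$, and $16\le\frac{3n}{b}$ since $\frac nb\ge\frac{16}{3}$, and $\gcd(16,n)=1$ since $2\nmid n$, and $1\le3\le b$ since $b>166$, and $16a<16\big(\frac n{30}+1\big)=\frac{8n}{15}+16<n$. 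By Lemma~3.3, $\ind(S)=1$.

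The hard part is the middle paragraph: recognizing that reducedness rigidly forces $c=n/5$, and carrying out the partial-sum check inside the quotient $\mathbb Z/(n/5)$ rather than $\mathbb Z/n$ — it is exactly the relation $c=a+b-1$ (equivalently $5a=u+v+5$) that collapses all fourteen partial sums into the short list above and keeps every one of them off $0$. Once $c=n/5$ is established the size constraints leave essentially no freedom, $(k,m)=(3,16)$ works, and the verification is a one-line computation.
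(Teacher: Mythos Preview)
Your proof is correct and takes a genuinely different route from the paper's.

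The paper proves Lemma~4.2 by a direct eight-case analysis on the location of $\frac{2n}{c},\frac{2n}{b},\frac{3n}{c},\frac{3n}{b},\ldots$, producing in each subcase an ad hoc pair $(k,m)$ satisfying Lemma~3.3(1); reducedness enters only through the crude bound $a\ge35$ from Remark~3.2. Your argument instead exploits reducedness structurally: by passing to the subgroup $\langle 5g\rangle\cong\mathbb{Z}/(n/5)$ and checking all proper partial sums of $(1,\tfrac v5,\tfrac u5,\tfrac n5-a)$, you show that non-minimality of $(5g)(5cg)(5(n-b)g)(5(n-a)g)$ forces $v=5c-n=0$, i.e.\ $c=n/5$ exactly. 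With $c$ pinned down, the constraint $s\le9$ from Lemma~3.14 squeezes $n/b$ into $[\tfrac{16}{3},6)$, and a single pair $(k,m)=(3,16)$ finishes the job. The computations in both halves check out (in particular the ten size-$2$ and size-$3$ partial sums all land in $(0,n/5)$ via the identity $5a=u+v+5$, and the inequality chain $\tfrac{3n}{160}<a<\tfrac{n}{80}+1$ does kill the range $n/b<\tfrac{16}{3}$).

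What your approach buys is brevity and a structural insight --- the exact value $c=n/5$ --- that the paper never isolates; it also treats (A2), (A3), (A4) uniformly without any case split. What the paper's approach buys is that it is more mechanical and avoids the subgroup/partial-sum verification, at the cost of a much longer case analysis. One minor point of presentation: your list displays the ten sums of size $\ge2$; the four singletons are handled by the preceding sentence, so the phrase ``all fourteen partial sums'' in your closing paragraph is accurate but could be read as inconsistent with the displayed list of ten --- you might say ``the remaining ten partial sums'' there.
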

\begin{proof}
Since $4<\frac{n}{c}\leq5<\frac{n}{b}<6$, $n\geq5b$. Note that $m_1=\lceil\frac{n}{c}\rceil=5$. We divide the proof into eight cases.

{\noindent \bf Case 1.} $8<\frac{2n}{c}\leq9<10<\frac{2n}{b}<12$.

Since $8(b+a-1)=8c<2n<12b$, we have $8(a-1)<4b$. Clearly, $\gcd(n,9)=1$.  Then
\beqs 9a=\frac{9a}{8(a-1)}\times 8(a-1)<\frac{9}{8}\times\frac{35}{34}\times 4b=\frac{315b}{272}<5b\leq n.\eeqs
By Lemma 3.3(1), this lemma holds with $k=2, m=9$.

{\noindent \bf Case 2.} $9<\frac{2n}{c}\leq10<11<\frac{2n}{b}<12$ and $\gcd(n,11)=1$.

Since $9(b+a-1)=9c<12b$, we have $9(a-1)<3b$ and
\beqs 11a=\frac{11a}{9(a-1)}\times 9(a-1)<\frac{11}{9}\times\frac{35}{34}\times 3b=\frac{385b}{102}<5b\leq n.\eeqs
Then Lemma 3.3(1) can be applied with $k=2, m=11$.

{\noindent \bf Case 3.} $9<\frac{2n}{c}\leq10<11<\frac{2n}{b}<12$, $\gcd(n,11)=11$ and $13<\frac{3n}{c}\leq14<15<16<\frac{3n}{b}<18$.

It still holds  $9(a-1)<3b$. By assumption $n\geq1000$ and $5\times7\times11<1000$, we have $\gcd(n,14)=1$. Then
\beqs 14a=\frac{14a}{9(a-1)}\times 9(a-1)<\frac{14}{9}\times\frac{35}{34}\times 3b=\frac{245b}{51}<5b\leq n.\eeqs

{\noindent \bf Case 4.} $9<\frac{2n}{c}\leq10<11<\frac{2n}{b}<12$, $\gcd(n,11)=11$ and $14<\frac{3n}{c}\leq15<16<\frac{3n}{b}<18$.

In this case,  $14(a-1)<4b$. Since $\gcd(n,16)=1$, we have
\beqs 16a=\frac{16a}{14(a-1)}\times 14(a-1)<\frac{16}{14}\times\frac{35}{34}\times 4b=\frac{80b}{17}<5b\leq n.\eeqs

{\noindent \bf Case 5.} $9<\frac{2n}{c}\leq10<\frac{2n}{b}<11$.

In this case we have $9(a-1)<2b$.

{\it Subcase 5.1.}  $13<\frac{3n}{c}\leq15<16<\frac{3n}{b}<17$.

Clearly, $\gcd(n,16)=1$. Then
\beqs 16a=\frac{16a}{9(a-1)}\times 9(a-1)<\frac{16}{9}\times\frac{35}{34}\times 2b=\frac{560b}{153}<5b\leq n.\eeqs

{\it Subcase 5.2.} $13<\frac{3n}{c}\leq14<15<\frac{3n}{b}<16$ and $\gcd(n,7)=1$.

\beqs 14a=\frac{14a}{9(a-1)}\times 9(a-1)<\frac{14}{9}\times\frac{35}{34}\times 2b=\frac{490b}{153}<5b\leq n.\eeqs

{\it Subcase 5.3.} $13<\frac{3n}{c}\leq14<15<\frac{3n}{b}<16$ and $\gcd(n,7)>1$.

Since $n>1000$ and $35|n$, we have $\gcd(19,n)=1$. It is easy to know that $18<\frac{4n}{c}\leq19<20<\frac{3n}{b}<22$. Then
\beqs 19a=\frac{19a}{9(a-1)}\times 9(a-1)<\frac{19}{9}\times\frac{35}{34}\times 2b=\frac{665b}{153}<5b\leq n.\eeqs

{\noindent \bf Case 6.} $14<\frac{3n}{c}\leq15<\frac{3n}{b}<16$.

In this case we have $14(a-1)<2b$.

{\it Subcase 6.1.}  $18<\frac{4n}{c}\leq19<20<21<\frac{3n}{b}<22$.

By assumption $n>1000$ and $5\times7\times19=665<1000$, we have $\gcd(n,21)=1$ or $\gcd(n,19)=1$. Let $m$ be one of $19$ and $21$ such that $\gcd(n,m)=1$. Then
\beqs ma\leq21a=\frac{21a}{14(a-1)}\times 14(a-1)<\frac{21}{14}\times\frac{35}{34}\times 2b=\frac{105b}{34}<5b\leq n.\eeqs

{\it Subcase 6.2.} $19<\frac{4n}{c}\leq20<21<\frac{4n}{b}<22$ and $\gcd(n,21)=1$.

The proof is similar to above.

{\it Subcase 6.3.} $19<\frac{4n}{c}\leq20<21<\frac{4n}{b}<22$ and $\gcd(n,21)>1$.

In this case we have $\gcd(n,26)=1$ and  $23<\frac{5n}{c}\leq25<26<\frac{5n}{b}<28$. Then
\beqs 26a=\frac{26a}{14(a-1)}\times 14(a-1)<\frac{26}{14}\times\frac{35}{34}\times 2b=\frac{65b}{17}<5b\leq n.\eeqs

{\it Subcase 6.4.} $18<\frac{4n}{c}\leq19<20<\frac{3n}{b}<21$.

It must hold that $23<\frac{5n}{c}\leq24<25<\frac{3n}{b}<27$. Since $\gcd(24,n)=1$, we have
\beqs 24a=\frac{24a}{14(a-1)}\times 14(a-1)<\frac{24}{14}\times\frac{35}{34}\times 2b=\frac{60b}{17}<5b\leq n.\eeqs

{\noindent \bf Case 7.} $19<\frac{4n}{c}\leq20<\frac{4n}{b}<21$.

In this case, $19(a-1)<2b$.

{\it Subcase 7.1.} $23<\frac{5n}{c}\leq24<25<\frac{5n}{b}<27$.

Since $\gcd(24,n)=1$, we have
\beqs 24a=\frac{24a}{19(a-1)}\times 19(a-1)<\frac{24}{19}\times\frac{35}{34}\times 2b=\frac{840b}{323}<5b\leq n.\eeqs

{\it Subcase 7.2.} $24<\frac{5n}{c}\leq25<26\frac{5n}{b}<27$.

It must hold that $32<\frac{7n}{c}\leq35<36\frac{5n}{b}<39$. Since $\gcd(n,36)=1$, we have
\beqs 36a=\frac{36a}{19(a-1)}\times 19(a-1)<\frac{36}{19}\times\frac{35}{34}\times 2b=\frac{1260b}{323}<5b\leq n.\eeqs

{\noindent \bf Case 8.} $24<\frac{5n}{c}\leq25<\frac{5n}{b}<26$.

By directly computing, we have $24(a-1)<2b$, then $\frac{b}{a}>\frac{24}{2}\times\frac{33}{34}=\frac{198}{17}$ and $s\geq11$, which contradicts to our assumption $s\leq9$.
\end{proof}

\begin{lemma}
If $6<\frac{n}{c}\leq7<\frac{n}{b}<8$ and $7|n$, then $\ind(S)=1$.
\end{lemma}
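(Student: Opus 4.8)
Lemma 4.3 (the final statement) treats the configuration $6<\frac{n}{c}\le 7<\frac{n}{b}<8$ with $7\mid n$, and it will be proved by the same "find a good $(k,m)$" machinery that Lemmas 4.1 and 4.2 already illustrate, so the plan is to mimic that structure. The plan is to fix $m_1=\lceil n/c\rceil=7$, record at once that $n\ge 7b$, and then to run a case analysis on where the multiples $\frac{2n}{c},\frac{2n}{b},\frac{3n}{c},\frac{3n}{b},\dots$ fall relative to consecutive integers, exactly as in Lemma 4.2. In each branch I would pick $k$ (typically $k=2$ or $k=3$, occasionally larger) and an explicit integer $m$ lying in $[\frac{kn}{c},\frac{kn}{b}]$, with $\gcd(m,n)=1$; then invoke Lemma 3.3(1) once I verify $ma<n$ and $1\le k\le b$. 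The coprimality of the chosen $m$ comes either from $\gcd(n,6)=1$ together with $7\mid n$ (ruling out small prime factors) or, when two consecutive small candidates both share a factor with $n$, from the bound $n>1000$ forcing the third prime of $n=p_1p_2p_3$ to be large enough that some nearby integer is coprime to $n$ — this is the device used repeatedly in Lemma 4.2 (e.g. $5\times7\times11<1000$).

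The analytic heart of each case is the same inequality manipulation as before: from $k_0\cdot\frac{n}{c}$ on one side and $\frac{kn}{b}$ on the other, together with $c=a+b-1$, one extracts a bound of the shape $(\text{small integer})(a-1)<(\text{small integer})\,b$; combined with $a\ge 36$ (Remark 3.2, since $7\mid n$ forces us into the $(A3)$-type bound, or $a\ge 35$ otherwise) and the lower bound on $m_1$ coming from $\gcd(n,6)=1$, this yields $ma<n$ after multiplying by the harmless factors $\frac{m}{\text{small integer}}$ and $\frac{a}{a-1}$, both of which are close to $1$. I would organize the cases by the position of $\frac{2n}{c},\frac{2n}{b}$ first (is there a prime-to-$n$ integer strictly between $2m_1-?$ and $2m_1+?$), then refine using $\frac{3n}{c},\frac{3n}{b}$, then $\frac{4n}{c},\frac{4n}{b}$, etc., stopping as soon as a valid $m$ is found; the deepest branches should, as in Lemma 4.2 Case 8, eventually force $s=\lfloor b/a\rfloor$ above $9$ and thereby contradict the standing assumption $s\le 9$, closing the argument.

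The main obstacle is bookkeeping rather than a single hard idea: I must make sure the finitely many sub-branches genuinely exhaust all possibilities for the fractional positions of $\frac{kn}{c}$ and $\frac{kn}{b}$ (using $\frac{n}{c}-\frac{n}{b}$ small, hence $\frac{kn}{c}$ and $\frac{kn}{b}$ differ by less than $k$ times a small quantity), and that in the troublesome "two consecutive composites" sub-cases the $n>1000$ argument really applies — i.e. the product of $7$ with the two forced small primes is under $1000$, so the missing prime must exceed the gap I need. I expect that, as in Lemma 4.2, the last surviving sub-case (where $\frac{n}{c}$ is very close to $7$ and all the small nearby integers are non-coprime) is handled not by producing an $m$ but by the inequality $b/a$ becoming so large that $s\ge 10$, contradicting $s\le 9$; pinning down exactly which sub-case that is, and checking the numerics $\frac{m}{k_0}\cdot\frac{a}{a-1}\cdot(\text{ratio})<7$ in every remaining branch, is the part that needs care.
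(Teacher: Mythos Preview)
Your plan is essentially the paper's own argument: fix $m_1=7$, split on the location of $\frac{2n}{c},\frac{2n}{b}$ (then $\frac{3n}{c},\frac{3n}{b}$, up through $\frac{5n}{c},\frac{5n}{b}$), in each branch pick an $m$ coprime to $n$ and bound $ma<n$ via $c=a+b-1$ and $a\ge 35$, with the deepest surviving branch yielding $27(a-1)<2b$ and hence $s\ge 13$, contradicting $s\le 9$. One small correction: the hypothesis $7\mid n$ does not force assumption $(A3)$, so you should use $a\ge 35$ (the factor $\tfrac{35}{34}$) uniformly, as the paper does, rather than $a\ge 36$.
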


\begin{proof}
Since $6<\frac{n}{c}\leq7<\frac{n}{b}<8$, $n\geq7b$. Note that $m_1=\lceil\frac{n}{c}\rceil=7$. We divide the proof into five cases.

{\noindent \bf Case 1.} $12<\frac{2n}{c}\leq13<14<15<\frac{2n}{b}<16$.

Since $13\times7\times5<1000$, we have $\gcd(13,n)=1$ or $\gcd(15,n)=1$. Let $m$ be one of $13$ and $15$ such that $\gcd(n,m)=1$. Easily, we have $12(a-1)<4b$, then
\beqs ma\leq15a=\frac{15a}{12(a-1)}\times 12(a-1)<\frac{15}{12}\times\frac{35}{34}\times 4b=\frac{175b}{34}<7b\leq n.\eeqs

{\noindent \bf Case 2.} $12<\frac{2n}{c}\leq13<14<\frac{2n}{b}<15$.

It must hold that $24<\frac{4n}{c}\leq26<27<28<\frac{4n}{b}<30$ and $12(a-1)<3b$. Since $\gcd(n,27)=1$,  we have
\beqs 27a=\frac{27a}{12(a-1)}\times 12(a-1)<\frac{27}{12}\times\frac{35}{34}\times 3b=\frac{945b}{136}<7b\leq n.\eeqs

{\noindent \bf Case 3.} $13<\frac{2n}{c}\leq14<15<\frac{2n}{b}<16$.

{\it Subcase 3.1.} $\gcd(n,15)=1$. The proof is similar to Case 1.

{\it Subcase 3.2.} $\gcd(n,15)=5$. Then we have $\gcd(n,22)=1$, $12(a-1)<3b$ and $19<\frac{3n}{c}\leq21<22<\frac{2n}{b}<24$.
\beqs 22a=\frac{22a}{12(a-1)}\times 12(a-1)<\frac{22}{12}\times\frac{35}{34}\times 3b=\frac{385b}{68}<7b\leq n.\eeqs

{\noindent \bf Case 4.} $13<\frac{2n}{c}\leq14<\frac{2n}{b}<15$.

In this case $13(a-1)<2b$.

{\it Subcase 4.1.} $19<\frac{3n}{c}\leq21<22<\frac{2n}{b}\leq23$.

We have $\frac{5n}{c}\leq35<36<\frac{5n}{b}$ and $\gcd(n,36)=1$. Then
\beqs 36a=\frac{36a}{13(a-1)}\times 13(a-1)<\frac{36}{13}\times\frac{35}{34}\times 2b=\frac{1260b}{221}<7b\leq n.\eeqs

{\it Subcase 4.2.} $19<\frac{3n}{c}\leq20<21<\frac{3n}{b}\leq22$ and $\gcd(n,20)=1$.

\beqs 20a=\frac{20a}{13(a-1)}\times 13(a-1)<\frac{20}{13}\times\frac{35}{34}\times 2b=\frac{700b}{221}<7b\leq n.\eeqs

{\it Subcase 4.3.} $19<\frac{3n}{c}\leq20<21<\frac{3n}{b}\leq22$ and $\gcd(n,20)=5$.

It must hold $\frac{4n}{c}\leq27<28<\frac{4n}{b}$.  Since  $\gcd(27,n)=1$, we have
\beqs 27a=\frac{27a}{13(a-1)}\times 13(a-1)<\frac{27}{13}\times\frac{35}{34}\times 2b=\frac{945b}{221}<7b\leq n.\eeqs

{\noindent \bf Case 5.} $20<\frac{3n}{c}\leq21<\frac{3n}{b}\leq22$.

If $29<\frac{4n}{b}$, the proof is similar to Subcase 4.1.  If $\frac{4n}{c}\leq 27$, the proof is similar to Subcase 4.2.

If $27<\frac{4n}{c}\leq28<\frac{4n}{b}\leq29$, we have $27(a-1)<2b$, then $\frac{b}{a}>\frac{27}{2}\times\frac{33}{34}=\frac{891}{68}$ and $s\geq13$, which contradicts to our assumption $s\leq9$.

\end{proof}

Let $l$ be the smallest integer such that $[\frac{ln}{c},\frac{ln}{b})$ contains at least four integers. Clearly, $l\geq3$.  Since $\frac{n}{b}-m_1<1$ and $m_1-\frac{n}{c}<1$, by using the minimality of $l$ we obtain that
$lm_1-4<\frac{ln}{c}<\frac{ln}{b}<lm_1+4$. Then $\frac{ln(c-b)}{bc}=\frac{ln}{b}-\frac{ln}{c}<(lm_1+4)-(lm_1-4)=8$ and thus
\beqs l<\frac{8bc}{(c-b)n}<\frac{8b}{(a-1)(m_1-1)}\leq \frac{8b}{(35-1)(5-1)}<b.\eeqs

We claim that $[\frac{ln}{c},\frac{ln}{b})$ contains at most six integers.  For any positive integer $j$,  let $N_j$ denote the number of integers contained in  $[\frac{jn}{c},\frac{jn}{b})$. Since \beqs \Big(\frac{(j+1)n}{b}-(j+1)m_1\Big)-\Big(\frac{jn}{b}-jm_1\Big)=\frac{n}{b}-m_1<1,\\
\Big((j+1)m_1-\frac{(j+1)n}{c}\Big)-\Big(jm_1-\frac{jn}{c}\Big)=m_1-\frac{n}{c}<1,\eeqs
we infer that $N_{j+1}-N_j\leq2$, it is sufficient to show our claim.

By the claim above we have
\beqs lm_1-j_0<\frac{ln}{c}\leq lm_1-j_0+1<\cdots<lm_1-j_0+4<\frac{ln}{b}\leq lm_1-j_0+6,\eeqs
for some  $1\leq j_0\leq 4$.
We remark that since $n=p_1p_2p_3$ and $[\frac{ln}{c},\frac{ln}{b})$ contains at least four integers, one of them (say $m$) must be co-prime to $n$. If $ma<n$, then we have done by Lemma 3.3(1)(with $k=l<b$).

\begin{lemma}
If $m_1\not=5,7$, then $\ind(S)=1$.
\end{lemma}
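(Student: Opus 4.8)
The plan is to produce, via condition~(1) of Lemma~3.3, integers $k$ and $m$ with $1\le k\le b$, $\gcd(m,n)=1$, $\frac{kn}{c}\le m\le\frac{kn}{b}$ and $ma<n$; any such pair forces $\ind(S)=1$. I would distinguish two ranges of $m_1$, recalling that $m_1\ge3$ because $c<\frac n2$ forces $\frac nc>2$.

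If $3\le m_1\le9$, then since $m_1\ne5,7$ we have $m_1\in\{3,4,6,8,9\}$, and every prime factor of such an $m_1$ lies in $\{2,3\}$; as $\gcd(n,6)=1$ this gives $\gcd(m_1,n)=1$. In the reduced situation the inequality $\frac{n}{c}\le m_1<\frac{n}{b}$ holds by the very definition of $m_1$, and $a\le b$ yields $m_1<\frac{n}{b}\le\frac{n}{a}$, i.e.\ $m_1a<n$. So Lemma~3.3(1) applies with $k=1$, $m=m_1$, and we are done in this range.

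If $m_1\ge10$, I would instead use the integer $l$ introduced just before the lemma (the least $k$ for which $[\frac{kn}{c},\frac{kn}{b})$ contains four integers) together with the bound $l<\frac{8b}{(a-1)(m_1-1)}$ already recorded there. Since $m_1-1\ge9$ and $a\ge35$ (Remark~3.2), $(a-1)(m_1-1)\ge9(a-1)\ge8a$, hence $l<\frac{b}{a}$, i.e.\ $la<b$. From the displayed chain $lm_1-j_0<\frac{ln}{c}\le lm_1-j_0+1<\cdots<lm_1-j_0+4<\frac{ln}{b}$, the four consecutive integers $lm_1-j_0+1,\dots,lm_1-j_0+4$ all lie in $[\frac{ln}{c},\frac{ln}{b})$; since each of $p_1,p_2,p_3$ exceeds $4$, none can divide two of these four, so by the pigeonhole principle one of them, say $m$, is coprime to $n$. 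Then $ma\le(lm_1-j_0+4)a<\frac{ln}{b}\cdot a=\frac{la}{b}\,n<n$, and Lemma~3.3(1) with $k=l$ (using $1\le l<b$) finishes the proof.

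The only genuinely non-mechanical point is the first step: one must notice that deleting $m_1=5,7$ from the range $m_1\le9$ leaves exactly the values that are automatically coprime to $n$, which is what makes the cheap witness $k=1$ legitimate — and this is precisely why $m_1=5$ and $m_1=7$ are excluded, since there $m_1$ can share the prime $5$ or $7$ with $n$ and must be handled separately (Lemmas~4.2--4.3 and their analogues). For $m_1\ge10$ the argument is merely the one-line estimate $la<b$ assembled from the already-available bounds $a\ge35$ and $l<\frac{8b}{(a-1)(m_1-1)}$, so I expect no difficulty there.
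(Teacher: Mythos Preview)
Your proof is correct and, in fact, tidier than the paper's. Both arguments ultimately invoke Lemma~3.3(1) with $k=l$ and an $m$ coprime to $n$ chosen from the four consecutive integers in $[\tfrac{ln}{c},\tfrac{ln}{b})$, but the routes to $ma<n$ differ.

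For the small range you treat $m_1\in\{3,4,6,8,9\}$ explicitly and close it out with $k=1$; the paper simply writes ``then $m_1\ge 10$'', silently absorbing this reduction (since $\gcd(m_1,n)>1$ may be assumed, else Lemma~3.3(1) with $k=1$ already applies). Your version makes the logic visible.

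For $m_1\ge 10$ the paper derives $(lm_1-j_0)(a-1)<6b$ from $(lm_1-j_0)c<ln\le(lm_1-j_0+6)b$ and then bounds
\[
ma\le (lm_1+3)a=\frac{lm_1+3}{lm_1-j_0}\cdot\frac{a}{a-1}\cdot(lm_1-j_0)(a-1)<\frac{33}{26}\cdot\frac{35}{34}\cdot 6b<10b\le n,
\]
using $l\ge 3$, $m_1\ge 10$, $j_0\le 4$ and $n\ge m_1 b\ge 10b$. You instead feed $m_1\ge 10$ into the already recorded inequality $l<\tfrac{8b}{(a-1)(m_1-1)}$ to get $l<\tfrac{8b}{9(a-1)}\le\tfrac{b}{a}$ (since $a\ge 35$), hence $la<b$ and $ma<\tfrac{ln}{b}\,a<n$. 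This sidesteps the numerical ratio entirely and is a genuine simplification: the paper's computation is replaced by a one-line monotonicity check. The cost is nil---you use nothing beyond what the paper has set up before the lemma.
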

\begin{proof}
If $m_1\not=5,7$, then $m_1\geq10$ and $n\geq m_1b\geq10b$. Let $k=l$ and let $m$ be one of the integers in $[\frac{ln}{c},\frac{ln}{b})$ which is co-prime to $n$.

Then $(lm_1-j_0)(b+a-1)=(lm_1-j_0)c<ln\leq(lm_1-j_0+6)b$, so $(lm_1-j_0)(a-1)<6b$. Note that $m\leq lm_1+3$ and $l\geq 3$, then
\beqs ma\leq(lm_1+3)a=\frac{lm_1+3}{lm_1-j_0}\times\frac{a}{a-1}\times(lm_1-j_0)(a-1)\\
<\frac{3\times10+3}{3\times10-4}\times\frac{35}{34}\times6b=\frac{3465b}{442}<10b\leq n,\eeqs
and we have done.
\end{proof}

\section{Proof of Proposition 3.17}

In this section, we always assume that $\lceil\frac{n}{c}\rceil=\lceil\frac{n}{b}\rceil$, so $k_1\geq2$, and we also assume that $k_1\leq\frac{b}{a}$. Since $a\leq\frac{b}{k_1}$, by Lemma 3.3 we may assume that $\gcd(n,m_1)>1$ for every $m_1\in[\frac{k_1n}{c},\frac{k_1n}{b})$.

\begin{lemma}{\rm (Lemma 3.7 of \cite{LP})}

If $u<\frac{n}{c}<\frac{n}{b}<v$ for some real numbers $u,v$ and $u(k_1-1)>s+1$, then
\beqs n<\frac{uv(k_1-1)(s+1)}{u(k_1-1)-(s+1)}.\eeqs
\end{lemma}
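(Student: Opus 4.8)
The plan is to turn the ceiling equality hidden in the definition of $k_1$ into a single clean inequality relating $n$, $b$, $c$, and then to substitute the three inequalities supplied by the hypotheses — namely $c<n/u$, a lower bound for $a-1$ coming from $s$, and $n<bv$ — one after another in the right order.

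First I would note that $c=a+b-1>b$ (since $a\geq 35$ by Remark 3.2), so $\frac{(k_1-1)n}{c}<\frac{(k_1-1)n}{b}$; since, by the definition of $k_1$, these two reals have the same ceiling $N$, they both lie in $(N-1,N]$, whence $\frac{(k_1-1)n}{b}-\frac{(k_1-1)n}{c}<1$. Clearing denominators and using $c-b=a-1$ gives the key inequality
\[
(k_1-1)(a-1)\,n < bc .
\]
This is nonvacuous because $k_1\geq 2$ throughout this section — in fact the hypothesis $u(k_1-1)>s+1>0$ already forces $k_1\geq 2$.

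Next, from $s=\lfloor b/a\rfloor$ we have $b/a<s+1$, hence $a-1>\tfrac{b}{s+1}-1=\tfrac{b-(s+1)}{s+1}$, and the numerator $b-(s+1)$ is positive since $b\geq a\geq 35$ while $s\leq 9$. Feeding this lower bound for $a-1$ into the key inequality and then bounding $c<n/u$ (from $u<n/c$) makes the factor $n$ cancel, leaving $u(k_1-1)\bigl(b-(s+1)\bigr)<(s+1)b$; rearranged, this reads $b\bigl(u(k_1-1)-(s+1)\bigr)<u(k_1-1)(s+1)$. Since $u(k_1-1)>s+1$ the bracketed factor is positive, so $b<\frac{u(k_1-1)(s+1)}{u(k_1-1)-(s+1)}$, and finally $n<bv$ (from $\frac{n}{b}<v$) yields the asserted bound.

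Honestly I expect no genuine obstacle here: the argument is pure bookkeeping once the key inequality is in hand. The only points that need care are checking that every denominator stays positive — i.e.\ $b>s+1$ and $u(k_1-1)-(s+1)>0$ — and performing the substitutions in the correct order, bounding $a-1$ from below \emph{before} invoking $c<n/u$, so that exactly one power of $n$ survives to be cancelled. All the real work of this part of the paper lies in the earlier structural reductions that produce the hypotheses $u<n/c<n/b<v$ and $u(k_1-1)>s+1$.
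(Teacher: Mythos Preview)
Your argument is correct. The paper does not supply its own proof of this lemma---it simply cites it as Lemma~3.7 of \cite{LP}---so there is nothing to compare against directly; your derivation of the key inequality $(k_1-1)(a-1)n<bc$ from the ceiling condition, followed by the substitutions $a-1>\frac{b-(s+1)}{s+1}$, $c<n/u$, and $n<bv$, is exactly the natural route and all the positivity checks you flag are handled correctly.
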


\begin{lemma}
$k_1\leq6$.
\end{lemma}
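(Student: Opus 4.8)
The goal is to bound $k_1$ from above, and the natural engine for this is Lemma 5.1, which converts a lower bound on $\frac{n}{c}$ and an upper bound on $\frac{n}{b}$ into an upper bound on $n$ in terms of $k_1$ and $s$. Since we are in the regime $\lceil\frac{n}{c}\rceil=\lceil\frac{n}{b}\rceil=m_1$, we have $m_1-1<\frac{n}{c}\le m_1<\frac{n}{b}<m_1+1$; but this crude interval is not tight enough by itself. The plan is to exploit the defining property of $k_1$: for every integer $j$ with $1\le j\le k_1-1$ we have $\lceil\frac{(j-1)n}{c}\rceil=\lceil\frac{(j-1)n}{b}\rceil$ and $\frac{jn}{c}\le m<\frac{jn}{b}$ for the relevant witnesses, which forces $\frac{(k_1-1)n}{b}-\frac{(k_1-1)n}{c}<1$, i.e. $\frac{(c-b)(k_1-1)n}{bc}<1$. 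Rewriting $c-b=a-1$, this reads $\frac{(a-1)(k_1-1)n}{bc}<1$, which after using $\frac{n}{c}>m_1-1\ge\dots$ gives a sharp relation between $a$, $b$, $k_1$ and $m_1$.

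Concretely, I would proceed as follows. First, assume for contradiction that $k_1\ge7$. From $\frac{(a-1)(k_1-1)n}{bc}<1$ and $\frac{n}{c}>m_1-1$ we get $(a-1)(k_1-1)(m_1-1)<b$, hence $\frac{b}{a}>\frac{(a-1)(k_1-1)(m_1-1)}{a}=\frac{a-1}{a}(k_1-1)(m_1-1)$. Now split on the value of $m_1$. Since $\lceil\frac{n}{c}\rceil=\lceil\frac{n}{b}\rceil$ and $n>1000$ with $n=p_1p_2p_3$, $\gcd(n,6)=1$, the smallest possible $m_1$ is determined by $n\ge m_1 b$ and the divisibility constraints; in fact $m_1\ge5$ (and $m_1\ge 7$ when $5\nmid n$, etc.). If $m_1\ge4$ already, then with $k_1\ge7$ and $a\ge 35$ (Remark 3.2) we obtain $s\ge\lfloor\frac{b}{a}\rfloor\ge\lfloor\frac{34}{35}\cdot 6\cdot 3\rfloor=\lfloor 52.4\ldots\rfloor$, which is vastly larger than $9$, contradicting the standing assumption $s\le 9$ established after Lemma 3.14. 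The only cases needing care are the borderline small values, but even $m_1=2$ with $k_1\ge 7$ gives $\frac{b}{a}>\frac{34}{35}\cdot 6\cdot 1>5$, and one then has to check whether such small $m_1$ is even attainable when $\lceil\frac{n}{c}\rceil=\lceil\frac{n}{b}\rceil$; the arithmetic constraint $n\ge m_1 b\ge 2b$ together with $n<1000$ being excluded pins things down.

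So the skeleton is: (i) extract the inequality $\frac{(a-1)(k_1-1)n}{bc}<1$ from the maximality of $k_1$, exactly as in the proof of Lemma 3.15; (ii) combine with $\frac{n}{c}>m_1-1$ to get $(a-1)(k_1-1)(m_1-1)<b$; (iii) deduce $s=\lfloor\frac{b}{a}\rfloor\ge\lfloor\frac{a-1}{a}(k_1-1)(m_1-1)\rfloor$; (iv) if $k_1\ge7$, then since $m_1\ge2$ and $a\ge35$ this already gives $s$ too large unless $m_1$ is extremely small, and a separate elementary check (as in Lemma 4.2's setup, handling $m_1\le3$ via $b<2a<b+2$-type arguments and the $\gcd(n,6)=1$ constraint) rules out the remaining possibilities; (v) conclude $k_1\le6$. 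I expect step (iv) — the careful treatment of the tiny values $m_1\in\{2,3\}$, where the generic inequality is weakest — to be the only real obstacle; everything else is a direct reprise of the technique already used for Lemma 3.15, and one may also invoke Lemma 5.1 directly with $u=m_1-1$, $v=m_1+1$ to get $n<\frac{(m_1^2-1)(k_1-1)(s+1)}{(m_1-1)(k_1-1)-(s+1)}$ and play this bound against $n>1000$ and $n\ge m_1 b\ge m_1\cdot k_1 a\ge 35 m_1 k_1$ to close the small cases.
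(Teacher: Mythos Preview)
Your core idea is sound but the execution is muddled, and you miss the paper's much shorter argument.

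First, the errors. You write that $\lceil n/c\rceil=\lceil n/b\rceil=m_1$ implies $m_1-1<\frac{n}{c}\le m_1<\frac{n}{b}<m_1+1$; this is backwards, since $\lceil n/b\rceil=m_1$ forces $n/b\le m_1$. More importantly, you worry needlessly about $m_1\in\{2,3\}$: since $c<n/2$ we have $n/c>2$, hence $m_1=\lceil n/c\rceil\ge 3$, so $m_1-1\ge 2$ always. With that, your own inequality $(a-1)(k_1-1)(m_1-1)<b$ and $k_1\ge 7$, $a\ge 35$ give $b>12(a-1)$, hence $s\ge\lfloor b/a\rfloor\ge\lfloor \tfrac{34}{35}\cdot 12\rfloor=11>9$, and you are done in one line---no case analysis on small $m_1$, no appeal to Lemma 5.1. (Your arithmetic $\tfrac{34}{35}\cdot 6\cdot 3\approx 52.4$ is also off; it is about $17.5$.)

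The paper takes a different and even shorter route. It simply observes that $k_1\ge 7$ forces $7\le k_1\le s\le 9$ (using the standing hypothesis $k_1\le b/a$ and Lemma~3.14). Then $s\ge 6$ gives $n<4b$ by Lemma~3.10, so $2<\frac{n}{c}<\frac{n}{b}<4$, and Lemma~5.1 with $u=2$, $v=4$ yields $n<\frac{2\cdot4\cdot(k_1-1)(s+1)}{2(k_1-1)-(s+1)}\le\frac{8\cdot6\cdot10}{12-10}=240$, contradicting $n>1000$. The difference is that the paper does not introduce $m_1$ at all: it uses the crude universal bound $n/c>2$ and the sharper upper bound $n/b<4$ supplied by Lemma~3.10, then lets Lemma~5.1 do the work. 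Your route avoids both Lemma~3.10 and Lemma~5.1 and bounds $s$ directly, which is arguably cleaner once you realize $m_1\ge 3$ is automatic; but as written your argument is tangled in cases that do not arise.
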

\begin{proof}
If $k_1\geq7$, then $7\leq k_1\leq s\leq9$. By Lemma 3.10, $n<4b$. Applying Lemma 5.1 with $u=2$ and $v=4$, we infer that $n<240$, which contradicts to our assumption $n>1000$.
\end{proof}

\begin{lemma}
If $k_1=6$, then $\ind(S)=1$.
\end{lemma}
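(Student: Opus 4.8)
The plan is to exploit the hypothesis $k_1 = 6$ together with the structural bounds already in place, namely $k_1 \le \frac{b}{a}$ (so $s \ge 6$), $s \le 9$, and the condition that $\lceil n/c\rceil = \lceil n/b\rceil =: m_1$ with every integer in $[\frac{k_1 n}{c}, \frac{k_1 n}{b})$ sharing a factor with $n$. Since $k_1=6$, Lemma 5.1 applies with $k_1-1 = 5$: choosing $u$ so that $u < n/c$ and $v$ so that $n/b < v$, and using that $s+1 \le 10$, we get $5u > s+1$ as soon as $u \ge 3$, which forces $n < \frac{5uv \cdot 10}{5u - 10}$. Because $s \ge 6$ we have $b/a \ge 6$, hence $a - 1 < \frac{b}{6}\cdot\frac{a}{a-1}\cdot\frac{6}{b}(a-1)$-type estimates give good control on $a$ relative to $b$; combined with Lemma 3.10 (which under $s \ge 6$ gives $\frac{n}{2b} < 2$, i.e. $n < 4b$) this pins $n/b$ into a narrow range. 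So the first step is to extract from $k_1 = 6$, $\lceil n/c\rceil = \lceil n/b\rceil$, and $n<4b$ that $m_1 \in \{2,3\}$ — since $n/b < 4$ forces $m_1 \le 4$, and $m_1 = 4$ together with $k_1 = 6$ should be eliminated by pushing Lemma 5.1 to a contradiction with $n > 1000$, as in Lemma 5.2.

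The second step is a case analysis on $m_1$ and on the location of the interval $[\frac{6n}{c}, \frac{6n}{b})$. Since $k_1 = 6$ is the \emph{largest} integer with $\lceil \frac{(k_1-1)n}{c}\rceil = \lceil \frac{(k_1-1)n}{b}\rceil$ and $\frac{k_1 n}{c} \le m < \frac{k_1 n}{b}$ for an integer $m$, we know $[\frac{6n}{c}, \frac{6n}{b})$ straddles (or sits just past) $6m_1$ and contains at least one integer, while $[\frac{jn}{c}, \frac{jn}{b})$ for $j \le 5$ contains exactly the expected near-$jm_1$ integers; the growth estimate $N_{j+1} - N_j \le 2$ from Section 4 bounds how many integers $[\frac{6n}{c}, \frac{6n}{b})$ holds. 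In each resulting configuration I would produce an explicit integer $m$ coprime to $n$ in a suitable interval $[\frac{kn}{c}, \frac{kn}{b})$ with $k \le b$ and verify $ma < n$ via the chain $ma = \frac{m}{(km_1 - j_0)}\cdot\frac{a}{a-1}\cdot(km_1-j_0)(a-1)$, using $(km_1 - j_0)(a-1) < (\text{small})\, b$ coming from $c = a + b - 1$ and the interval inequalities, exactly the mechanism of Lemmas 4.2–4.4. Then Lemma 3.3(1) gives $\ind(S) = 1$. The coprimality of the chosen $m$ is guaranteed because $n = p_1 p_2 p_3$ has only three prime factors, so among any four consecutive integers — or among the $\ge 4$ integers appearing in the relevant window once $k$ is taken a small multiple of $6$ — one is coprime to $n$; where the window is short I would invoke $n > 1000$ to rule out the small primes dividing the few candidates, as is done repeatedly in Lemmas 4.2 and 4.3.

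The main obstacle I anticipate is the bookkeeping in the sub-case where $m_1 = 2$: then $n/b$ is close to $2$, the intervals $[\frac{jn}{c}, \frac{jn}{b})$ grow slowly, and one may need to go to $k = 6$ or $k = 12$ before finding a window with an integer coprime to $n$, so the bound $(km_1 - j_0)(a-1) < (\text{const})\,b$ degrades and one must check $ma < n \le km_1 b/(\ldots)$ carefully; here the constraint $s \le 9$ (equivalently $b/a \le 9$, so $b < 10a$) is what saves the inequality, and the argument must be arranged so that the constant in front of $b$ stays below $m_1 = 2$ after dividing through — which is tight. The case $m_1 = 3$ is more comfortable since $n \ge 3b$ gives more room, paralleling Lemma 4.5. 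I expect that, as in the preceding lemmas, a handful of explicit numeric inequalities using $a \ge 35$ (from Remark 3.2), $n > 1000$, $b/a \le 9$, and $n < 4b$ will close every branch, and that no genuinely new idea beyond Lemmas 3.3, 3.10 and 5.1 is required — only a careful exhaustion of the finitely many positions the interval $[\frac{6n}{c},\frac{6n}{b})$ can occupy.
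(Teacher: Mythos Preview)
Your overall strategy---apply Lemma~5.1 to eliminate most configurations, then case-analyse the survivors via Lemma~3.3 or 3.4---is exactly the paper's. Two concrete points in your setup are off, however. First, $c < n/2$ is a standing hypothesis from the reduction at the start of Section~3, so $n/c > 2$ and hence $m_1 := \lceil n/c\rceil = \lceil n/b\rceil \geq 3$; combined with $n < 4b$ (from Lemma~3.10, since $s \geq 6$) this forces $m_1 \in \{3,4\}$, not $\{2,3\}$. The case $m_1 = 2$ that you single out as the main obstacle is therefore vacuous. Second, you overlook that Lemma~5.1 with $u=2$, $v=4$ already applies whenever $u(k_1-1) = 10 > s+1$, that is, for all $s \leq 8$: it yields $n < \tfrac{40(s+1)}{9-s} \leq 360$, contradicting $n > 1000$. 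So only $s = 9$ survives; within that, $m_1 = 4$ is dispatched by Lemma~5.1 with $u = 3$, $v = 4$ (giving $n < 120$), leaving only the single configuration $s = 9$, $2 < n/c < n/b < 3$.

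For that remaining case the paper parametrizes by the position of the integer-free interval $[\tfrac{5n}{c},\tfrac{5n}{b})$, writing $10+r < \tfrac{5n}{c} < \tfrac{5n}{b} \leq 11+r$ for some $r \in \{0,\dots,4\}$, and in each of the five subcases exhibits an integer $m$ coprime to $n$ lying in $[\tfrac{kn}{c},\tfrac{kn}{b})$ for some $k \in \{6,7,8,9\}$; since $k \leq s$, Lemma~3.4 applies directly. No $k$ as large as $12$ is needed, and the coprimality checks reduce either to $\gcd(m,6)=1$ or to the observation that the alternative would force $n$ to be a product of three specified small primes with product below $1000$.
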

\begin{proof}
Assume that $k_1=6$. Then $s\geq6$, by Lemma 3.10, we have $n<4b$.  If $s\leq8$, applying Lemma 5.1 with $u=2$ and $v=4$, we infer that $n<360$, which contradicts to our assumption $n>1000$.

Let $s=9$.  If $3<\frac{n}{c}<\frac{n}{b}<4$, the proof is similar to above. If $2<\frac{n}{c}<\frac{n}{b}<3$, then $10<\frac{5n}{c}<\frac{5n}{b}<15$. By the definition  of $k_1$, we have $10+r<\frac{5n}{b}<\frac{5n}{b}\leq11+r$ for some $r=0,1,2,3,4$.

{\bf Case 1.} $r=0$.  $10<\frac{5n}{c}<\frac{5n}{b}\leq11$, then
\beqs 12<\frac{6n}{c}\leq13<\frac{6n}{b}<\frac{66}{5},&& 14<\frac{7n}{c}<15<\frac{7n}{b}<\frac{77}{5},\\
16<\frac{8n}{c}\leq17<\frac{8n}{b}<\frac{88}{5}, && 18<\frac{9n}{c}\leq19<\frac{9n}{b}<\frac{99}{5},\eeqs
and we can find  $m\in\{13,15,17,19\}$ such $\gcd(m,n)=1$, by Lemma 3.4, $\ind(S)=1$.

{\bf Case 2.} $r=1$.  $11<\frac{5n}{c}<\frac{5n}{b}<12$, then $\frac{77}{5}<\frac{7n}{c}<16<\frac{7n}{b}<\frac{84}{5}$ and $\gcd(16,n)=1$, by Lemma 3.4, $\ind(S)=1$.

{\bf Case 3.} $r=2$. $12<\frac{5n}{c}<\frac{5n}{b}<13$. If $\frac{84}{5}<\frac{7n}{c}<18<\frac{91}{5}$, then $\gcd(18,n)=1$, and  $\ind(S)=1$. Otherwise,
\beqs \frac{72}{5}<\frac{6n}{c}\leq15<\frac{6n}{b}<\frac{78}{5},&& \frac{84}{5}<\frac{7n}{c}\leq17<\frac{7n}{b}<18,\\
\frac{96}{5}<\frac{8n}{c}\leq20<\frac{8n}{b}<\frac{144}{7}, && \frac{108}{5}<\frac{9n}{c}<22<\frac{9n}{b}<\frac{162}{7},\eeqs
and we can find  $m\in\{5,11,17\}$ such $\gcd(m,n)=1$, by Lemma 3.4, $\ind(S)=1$. Otherwise, $n=5\times11\times17=935<1000$, which contradicts to our assumption.

{\bf Case 4.} $r=3$. $13<\frac{5n}{c}<\frac{5n}{b}<14$. Then $\frac{78}{5}<\frac{6n}{c}<16<\frac{6n}{b}<\frac{84}{5}$, and $\gcd(16,n)=1$, by Lemma 3.4, $\ind(S)=1$.

{\bf Case 5.} $r=4$. $14<\frac{5n}{c}<\frac{5n}{b}<15$. Then \beqs \frac{84}{5}<\frac{6n}{c}\leq17<\frac{6n}{b}<18,&& \frac{98}{5}<\frac{7n}{c}<20<\frac{7n}{b}<21,\\
\frac{112}{5}<\frac{8n}{c}\leq23<\frac{8n}{b}<24, && \frac{126}{5}<\frac{9n}{c}<26<\frac{9n}{b}<27,\eeqs
and we can find  $m\in\{5,13,17,23\}$ such $\gcd(m,n)=1$,by Lemma 3.4, $\ind(S)=1$.
\end{proof}

\begin{lemma}
If $k_1=5$, then $\ind(S)=1$.
\end{lemma}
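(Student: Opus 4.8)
The plan is to follow the template of Lemma 5.4. Since $k_1=5\le\frac{b}{a}$ we have $s=\lfloor\frac{b}{a}\rfloor\ge5$, and $s\le9$ by Lemma 3.14. Put $m_1=\lceil\frac{n}{c}\rceil=\lceil\frac{n}{b}\rceil$, so that $m_1\ge2$ and $m_1-1<\frac{n}{c}<\frac{n}{b}\le m_1$; Lemma 3.9 (applicable since $s\ge4$) gives $n<6b$, hence $m_1\le6$, and if $s\ge6$ then Lemma 3.10 gives $n<4b$, hence $m_1\le4$. The first step is to clear away every pair $(m_1,s)$ to which Lemma 5.1 applies: taking $u=m_1-1$ and $v=m_1$, the hypothesis $u(k_1-1)=4(m_1-1)>s+1$ holds exactly when $m_1\ge4$, or $m_1=3$ and $s\le6$. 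In those cases Lemma 5.1 yields
\[
n<\frac{4\,m_1(m_1-1)(s+1)}{4(m_1-1)-(s+1)},
\]
and a direct check over the finitely many admissible triples — $m_1=3$ with $s\in\{5,6\}$; $m_1=4$ with $s\in\{5,\dots,9\}$; $m_1\in\{5,6\}$ with $s=5$ — shows the right-hand side is always smaller than $1000$, contradicting $n>1000$. Hence only two families remain: $m_1=2$ for every $s\in\{5,\dots,9\}$, and $m_1=3$ for $s\in\{7,8,9\}$.

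For these I would run the same kind of analysis as in Cases 1--5 of Lemma 5.4. By the definition of $k_1=5$, the interval $[\frac{jn}{c},\frac{jn}{b})$ contains no integer for $j=1,2,3,4$ but contains one for $j=5$; and since every integer of $[\frac{5n}{c},\frac{5n}{b})$ is non-coprime to $n=p_1p_2p_3$, while any four consecutive integers include one coprime to $n$ (because $p_1,p_2,p_3\ge5$), that interval contains at most three integers. As the integers coprime to $n$ are plentiful in the relevant ranges — e.g.\ $6,8,9,10,12$ are automatically coprime to $n$ by $\gcd(n,6)=1$ — the requirement that every integer of $[\frac{5n}{c},\frac{5n}{b})$ divide $n$, together with $\frac{n}{b}-\frac{n}{c}<1$, pins the location of $\frac{5n}{c},\frac{5n}{b}$ (and hence of $\frac{6n}{c},\frac{7n}{c},\dots$) down to a short finite list of configurations. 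In each configuration one then exhibits a multiple $j$ with $5\le j\le9$ and an integer $m\in[\frac{jn}{c},\frac{jn}{b})$ having $\gcd(m,n)=1$: the only way to avoid this throughout $j=5,\dots,9$ would force $n$ to be one of a few products of three small primes (such as $5\cdot7\cdot11$, $5\cdot7\cdot17$, $5\cdot11\cdot17$), each below $1000$, contradicting our standing assumption. One then concludes by Lemma 3.4, or by Lemma 3.3(1) after checking $ma<n$. In the sub-family $m_1=2$ with $s\in\{5,6\}$ there is also a shortcut: once $\frac{n}{c}$ is known to exceed $\tfrac{8}{5}$ (resp.\ $\tfrac{9}{5}$) one may apply Lemma 5.1 with $u=\tfrac{8}{5}$ (resp.\ $\tfrac{9}{5}$) and $v=2$ to reach a contradiction directly.

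The hard part will be the bookkeeping in the sub-family $m_1=2$ with $s\in\{7,8,9\}$. Here Lemma 5.1 is of no use — it would require $u>2$ while $u<\frac{n}{c}<2$ — so one must rely entirely on the divisibility structure of the integers in $[\frac{5n}{c},\frac{5n}{b})\subset(5,10)$, which forces $7\mid n$ and locates that interval almost uniquely, after which the relations $1+c=a+b$ and $9\le\frac{b}{a}<10$ have to be combined with the positions of the integers near $\frac{6n}{c},\dots,\frac{9n}{c}$ to produce the coprime multiplier. The branching is one level deeper than in Lemma 5.4 (four integer-free intervals $j=1,2,3,4$ now precede $j=5$, versus five preceding $j=6$ when $k_1=6$), so I expect a case list of roughly the same length as in Lemma 5.4, settled by the same mixture of Lemma 3.3(1) and Lemma 3.4.
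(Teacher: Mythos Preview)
Your overall strategy matches the paper's: eliminate most $(m_1,s)$ pairs via Lemma~5.1, then finish the residue by locating the integer-free interval coming from $k_1=5$ and producing a coprime $m$ for Lemma~3.4. Two points, however, deserve correction.

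First, the case $m_1=2$ is vacuous. By the standing normalization $c=x_2<\frac{n}{2}$ (and $b<\frac{n}{2}$), we always have $\frac{n}{c}>2$, hence $m_1=\lceil\frac{n}{c}\rceil\ge3$. So your entire ``hard part'' --- the sub-family $m_1=2$ with $s\in\{7,8,9\}$, and the suggested shortcuts with $u=\tfrac{8}{5}$ or $\tfrac{9}{5}$ --- simply does not occur. The only family Lemma~5.1 fails to kill is $m_1=3$ with $s\in\{7,8,9\}$, which you correctly list but then underestimate: this is the whole remaining work, not a side case. (Minor: the bound $n<6b$ for $s\ge4$ is Lemma~3.8, not 3.9.)

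Second, for that remaining family the paper's argument is shorter than the one you sketch. Rather than analysing $[\frac{5n}{c},\frac{5n}{b})$, the paper exploits that $[\frac{4n}{c},\frac{4n}{b})$ contains \emph{no} integer (since $k_1=5$), so $8+r<\frac{4n}{c}<\frac{4n}{b}\le 9+r$ for some $r\in\{0,1,2,3\}$. The value $r=3$ gives $\frac{n}{c}>\frac{11}{4}$ and falls to Lemma~5.1 with $u=\tfrac{11}{4},\,v=3$. Each of $r=0,1,2$ is then dispatched in one line: one reads off the location of $[\frac{jn}{c},\frac{jn}{b})$ for $j=5,6,7$ and immediately finds a coprime $m$ (e.g.\ $m=12$ when $r=1$, $m=16$ when $r=2$), invoking Lemma~3.4. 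So the ``bookkeeping'' you anticipate is only three short subcases, not a deep branching tree.
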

\begin{proof}
Assume that $k_1=5$. Since $s\geq5$, we have $n<6b$. If  $s\leq6$ or $\frac{11}{4}<\frac{n}{c}$, we can get a contradiction by applying Lemma 5.1.

For $s=7,8,9$, let $2<\frac{n}{c}<\frac{n}{b}<3$. We have $8+r<\frac{4n}{c}<\frac{4n}{b}\leq9+r$ for some $r=0,1,2,3$. If $r=3$, then $\frac{11}{4}<\frac{n}{c}$, which has been solved.

{\bf Case 1.} $r=0$. Then
\beqs 10<\frac{5n}{c}\leq11<\frac{5n}{b}<\frac{45}{4}, 12<\frac{6n}{c}\leq13<\frac{6n}{b}<\frac{27}{2}, 14<\frac{7n}{c}<15<\frac{7n}{b}<\frac{63}{4},\eeqs
and we can find  $m\in\{11,13,15\}$ such $\gcd(m,n)=1$ because $11\times13\times5<1000$. By Lemma 3.4, $\ind(S)=1$.

{\bf Case 2.} $r=1$. Then $\frac{45}{4}<\frac{5n}{c}<12<\frac{5n}{b}<\frac{25}{2}$,
and  $\gcd(12,n)=1$. By Lemma 3.4, $\ind(S)=1$.

{\bf Case 3.} $r=2$. Then $15<\frac{6n}{c}<16<\frac{6n}{b}<\frac{33}{2}$, and  $\gcd(16,n)=1$. Let $m=16$ and $k=6$,  we have $m\cdot a=16\times(c+1-n)\leq16\times(\frac{2n-1}{5}-\frac{4n}{11}+1)=\frac{16\times(2n+44)}{55}<\frac{2n+44}{3}<n$.
By Lemma 3.4, $\ind(S)=1$.
\end{proof}

\begin{lemma}
If $k_1=4$, then $\ind(S)=1$.
\end{lemma}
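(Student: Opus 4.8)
The plan is to follow the pattern of the proofs of Lemmas 5.3 and 5.4.

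First, since $k_1=4\le b/a$ we have $s=\lfloor b/a\rfloor\ge 4$; by Lemma 3.8 this gives $n<6b$, and (with $c<n/2$) forces $2<\frac nc<\frac nb<6$, whence $m_1:=\lceil\frac nc\rceil=\lceil\frac nb\rceil\in\{3,4,5,6\}$. Lemma 3.14 gives $s\le 9$, and Lemma 3.10 sharpens $n<6b$ to $n<4b$ as soon as $s\ge 6$. I would then run Lemma 5.1 with $k_1-1=3$: whenever $u<\frac nc$, $\frac nb<v$ and $3u>s+1$ one has $n<\dfrac{3uv(s+1)}{3u-(s+1)}$. Taking $u$ just below $m_1$ and $v$ just above $m_1$ (so $v-u<2$) and $s\le 9$, the right-hand side falls below $1000$ in all but a short explicit list of ``tight'' configurations: it eliminates $m_1\in\{5,6\}$ outright, it eliminates $m_1=4$ except when $s\in\{8,9\}$ and $\frac nc$ is near $3$, and it eliminates $m_1=3$ except when $\frac nc$ is near $2$ (the closer, the smaller $s$ is allowed to be).

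Second, I would pin $\frac nc$ down further. Since $k_1=4$, $\lceil\frac{3n}{c}\rceil=\lceil\frac{3n}{b}\rceil$, so in each surviving case $\frac nc$ and $\frac nb$ lie in a common interval $(\frac{j-1}{3},\frac j3]$ with $j\in\{7,\dots,12\}$; this plays the role that $\lceil\frac{4n}{c}\rceil$ played in Lemma 5.4. Hence for each multiplier $k\in\{3,4,\dots,9\}$ the interval $[\frac{kn}{c},\frac{kn}{b})$ lies between two explicit consecutive integers, and it has length $\frac k3\bigl(\frac{3n}{b}-\frac{3n}{c}\bigr)<\frac k3\le 3$, so any integer $m$ lying in it satisfies $m<km_1+3$.

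Third comes the mechanism used in Lemmas 5.3 and 5.4. By the standing hypothesis of this section, every integer in $[\frac{4n}{c},\frac{4n}{b})$ shares a prime factor with $n$. Since $n=p_1p_2p_3$ with all $p_i\ge 5$, since $\gcd(n,6)=1$ makes every integer whose only prime factors are $2$ and $3$ automatically coprime to $n$, and since $n>1000$ forbids $n=p_1p_2p_3$ from being a product of three small primes, in each surviving case one can exhibit a $k\le 9<35\le a\le b$ and an integer $m$ with $\gcd(m,n)=1$ and $\frac{kn}{c}\le m<\frac{kn}{b}$. It then remains to verify $ma<n$: from $km_1-j_0<\frac{kn}{c}$ and $kn\le(km_1-j_0+t)b$ for the relevant small $t$ one gets $(km_1-j_0)(a-1)<tb$, and combining with $m<km_1+3$, $a-1\ge 34$ (Remark 3.2) and $a\le b/s\le b/4$ gives $ma<n$. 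Then Lemma 3.3(1) (equivalently Lemma 3.4) yields $\ind(S)=1$.

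I expect the main obstacle to be the number of sub-cases rather than any new idea. Because $k_1=4$ is smaller than the values handled in Lemmas 5.3 and 5.4, Lemma 5.1 is correspondingly weaker here, so the residual ``tight'' list is longer; in particular the surviving configurations with $m_1=4$ and $s\in\{8,9\}$ (which for $k_1=5,6$ were dispatched by a single application of Lemma 5.1) must be treated directly, and for each of them one has to choose $k$ so that $[\frac{kn}{c},\frac{kn}{b})$ is guaranteed to meet an integer coprime to $n$ and then check $ma<n$. Everything uses only the elementary estimates already employed in Lemmas 5.1, 3.3, 3.4 and 3.10 together with the structure $n=p_1p_2p_3$; the argument is a longer version of the case analyses in Lemmas 5.3 and 5.4.
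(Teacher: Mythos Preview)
Your overall outline matches the paper's: reduce via Lemma~5.1, then case-split on the value of $\lceil 3n/c\rceil=\lceil 3n/b\rceil$ and hunt for a multiplier $k$ and an $m\in[\tfrac{kn}{c},\tfrac{kn}{b})$ coprime to $n$. But two ingredients used by the paper are missing from your plan, and without them the argument does not close.

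First, the standing assumption $(B)$ from Lemma~3.5 is used not only to get $n<6b$ (Lemma~3.8) but also actively in the case analysis: in several subcases the paper produces an integer coprime to $n$ (such as $12$, $16$, $18$, $24$) lying in an interval $[\tfrac{(2s-2t-1)n}{2b},\tfrac{(s-t)n}{b}]$ rather than in $[\tfrac{kn}{c},\tfrac{kn}{b})$, and gets a direct contradiction to $(B)$. Your scheme never appeals to these half-intervals, yet for example in the case $7<\tfrac{3n}{c}<\tfrac{3n}{b}\le 8$ with $12<\tfrac{5n}{c}\le 13$ there is no $m\in[\tfrac{kn}{c},\tfrac{kn}{b})$ with $\gcd(m,n)=1$ for small $k$, and it is $(B)$ that the paper invokes.

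Second, and more seriously, your claim that ``$n>1000$ forbids $n=p_1p_2p_3$ from being a product of three small primes'' is false, and this is exactly where the argument breaks. In the subcase $s\in\{8,9\}$, $7<\tfrac{3n}{c}<\tfrac{3n}{b}\le 8$, one is forced (after using $(B)$) to $5\mid n$, $13\mid n$, $17\mid n$, i.e.\ $n=5\cdot13\cdot17=1105>1000$; similarly another subcase pins down $n=7\cdot11\cdot17=1309$. For $n=1105$ one can check that for every $k\le 9$ the integers in $[\tfrac{kn}{c},\tfrac{kn}{b})$ that are coprime to $n$ (if any) give $ma>n$, so neither Lemma~3.3(1) nor Lemma~3.4 applies. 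The paper finishes these cases by a completely different mechanism: it uses the divisibility structure of $(A2)$, $(A3)$, $(A4)$ on $a,b,c$ (e.g.\ $p_ip_j\mid a$, etc.) together with $s\le\tfrac{b}{a}<s+1$ to enumerate the finitely many admissible $(a,b,c)$ and exclude each one. Your plan never invokes the $(A2)$/$(A3)$/$(A4)$ constraints on $a,b,c$, and without that step these residual cases cannot be eliminated.
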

\begin{proof}

Assume that $k_1=4$.  Since $s\geq4$, we have $n<6b$.

{\bf Case 1.} $s=4$, or $3<\frac{n}{c}$ and $s=5,6,7$, we can get a contradiction by applying Lemma 5.1.

{\bf Case 2.}  If $2<\frac{n}{c}<\frac{n}{b}<3$ and $s=5,6,7$. Then $6+r<\frac{3n}{c}<\frac{3n}{b}\leq7+r$ for some $r=0,1,2$.

{\it Subcase 2.1.} $r=0$. We have $8<\frac{4n}{c}<\frac{4n}{b}\leq\frac{28}{3}$, then $m_1=9$, which contradicts to $\gcd(n,m_1)=1$.

{\it Subcase 2.2.} $r=1$. We have $12<\frac{5n}{c}\leq13<\frac{5n}{b}<\frac{40}{3}$ or $\frac{35}{3}<\frac{5n}{c}<12<\frac{5n}{b}<\frac{40}{3}$.

If $\frac{5n}{c}<12<\frac{5n}{b}$, then $\ind(S)=1$ by Lemma 3.4. If $12<\frac{5n}{c}\leq13<\frac{5n}{b}<\frac{40}{3}$,
we have $\frac{9n}{2b}<12<13<\frac{5n}{b}$, and $12\in[\frac{9n}{2b},\frac{5n}{b}]$. Since $\gcd(n,12)=1$,  which contradicts to our previous assumption $(B)$ with $t=0,1,2$ for $s=5,6,7$, respectively.

{\it Subcase 2.3.} $r=2$. We have $\frac{8}{3}<\frac{n}{c}<\frac{n}{b}<3$, we can get a contradiction  by Lemma 5.1.

{\bf Case 3.} $s=8,9$. We have $n<4b$. Then $6+r<\frac{3n}{c}<\frac{3n}{b}\leq7+r$ for some $r=0,1,2,3,4,5$.

{\it Subcase 3.1.} $r=0$. We have $8<\frac{4n}{c}<\frac{4n}{b}<\frac{28}{3}$, then $m_1=9$, which contradicts to $\gcd(n,m_1)=1$.

{\it Subcase 3.2.} $r=1$. We have $\frac{28}{3}<\frac{4n}{c}\leq10<\frac{4n}{b}<\frac{32}{3}$. Assume that $5|n$, otherwise $\ind(S)=1$ by Lemma 3.4.   Furthermore, if $\frac{5n}{c}<12<\frac{5n}{b}$, we also have $\ind(S)=1$ by Lemma 3.4. Then  $12<\frac{5n}{c}\leq13<\frac{5n}{b}<\frac{40}{3}$.
Since $\gcd(n,18)=1$, we infer that $\frac{84}{5}<\frac{7n}{c}\leq17<\frac{7n}{b}<18$ and $n=5\times13\times17$. Otherwise, we have $\frac{13n}{2b}<\frac{52}{3}<18<\frac{7n}{b}$, which contradicts to $(B)$ with $t=1,2$ for $s=8,9$, respectively.

Under assumption $(A4)$:  $17\geq\frac{n}{a}=\frac{n}{b}\times\frac{b}{a}\geq2\times8=16>13>\frac{17}{2}$, we infer that $a=5\times13$. Because $8\leq\frac{b}{a}<10$,  and $b=j\times17\times5 (j<7)$ or $b=j\times 17\times 13 (j<3)$. However,
$\frac{6\times17\times5}{5\times13}=\frac{102}{13}<8$, $\frac{2\times17\times13}{5\times13}=\frac{34}{5}<8$, contradiction.

Under assumption $(A3)$:  we infer that $a=5\times13+1$. Because $8\leq\frac{b}{a}<10$,  and $b=j\times17\times5+1 (j<7)$ or $b=j\times 17\times 13+1 (j<3)$. However,
$\frac{6\times17\times5+1}{5\times13+1}=\frac{511}{66}<8$, $\frac{2\times17\times13+1}{5\times13+1}=\frac{443}{66}<8$, contradiction.

Under assumption $(A2)$, we distinguish three cases.

$(A2.1)$: $p_1p_2|a,p_1|b, p_2|c$, then $a=5\times13$. Moreover, $40\leq b<50$ when $p_1=5$ and $104\leq b<130$ when $p_1=13$. If $p_1=5$, then $\frac{n}{b}\geq \frac{5\times13\times17}{50}>22$,  contradiction. If $p_1=13$, then $\frac{n}{b}\geq \frac{5\times13\times17}{130}=\frac{85}{8}$,  contradiction.

$(A2.2)$: $p_1|a, p_1p_2|b, p_2|c$.  If $p_1=5$, then by $a=j\times5$ and $16<\frac{n}{a}\leq17$, we have $j=13$,  contradiction. If $p_1=13$, then by $a=j\times5$ and $16<\frac{n}{a}\leq17$, we have $j=5$, contradiction. If $p_1=17$, then $a=4\times17=68$. Moreover, $b=2\times13\times17$ or $b=j\times5\times17(j=4,5,6)$. If $b=2\times13\times17$, then $c=a+b-1=373$, which contradicts to $\gcd(n,c)=5$. If $b=j\times5\times17(j=4,5,6)$, then
$c\in\{261,356,431\}$, which contradicts to $\gcd(n,c)=13$.

$(A2.3)$: $p_1|a, p_2|b, p_1p_2|c$.  Similar to  $(A2.2)$, we have $a=4\times17=68$. Then $544=68\times8\leq b<680$. Since $b<\frac{n}{2}<553$, we have $p_2=5, b\in\{545,550\}$ or $p_2=13, b=546$. If $p_2=5$, then $c\in\{612,617\}$, which contradicts to $\gcd(n,c)=5\times17$. If $p_2=13$, then $c=613$, which contradicts to $\gcd(n,c)=17\times13$.

 {\it A remark on the proof.} From now on, throughout this section,  if $n$ is determined as a product of three small explicit primes similar to above, we only check it under assumption $(A4)$. The proof for $(A2)$ and  $(A3)$ is not essentially different from the above process.

{\it Subcase 3.3.} $r=2$. We have
\beqs  \frac{32}{3}<\frac{4n}{c}\leq11<\frac{4n}{b}<12, \frac{40}{3}<\frac{5n}{c}\leq14<\frac{5n}{b}<15, 16<\frac{6n}{c}\leq17<\frac{6n}{b}<18.\eeqs
Then $n=7\times11\times17$. Otherwise, there exists $m\in\{11,14,17\}$ such that $\gcd(n,m)=1$ and $\ind(S)=1$.
Clearly, $17\geq\frac{n}{a}=\frac{n}{b}\times\frac{b}{a}\geq2\times8=16>11>\frac{17}{2}$, we have $a=7\times 11$. Because $8\leq\frac{b}{a}<10$,  and $b=j\times17\times7 (j<6)$ or $b=j\times 17\times 11 (j<4)$. However,
$\frac{5\times17\times7}{7\times11}=\frac{85}{11}<8$, $\frac{3\times17\times13}{7\times13}=\frac{51}{7}<8$, contradiction.

{\it Subcase 3.4.} $r=3$. We have $15<\frac{5n}{c}<16<\frac{5n}{b}<\frac{50}{3}<17$ and $\gcd(n,16)=1$. Let $m=16$ and $k=5$,  we have $m\cdot a=16\times(c+1-n)\leq16\times(\frac{n-1}{3}-\frac{3n}{10}+1)=\frac{16\times(n+28)}{10}<n$,  then $\ind(S)=1$.

{\it Subcase 3.5.} $r=4$. We have
\beqs  \frac{40}{3}<\frac{4n}{c}\leq14<\frac{4n}{b}\leq\frac{44}{3},
\frac{50}{3}<\frac{5n}{c}<\frac{5n}{b}\leq\frac{55}{3}<19,
\frac{70}{3}<\frac{7n}{c}<\frac{7n}{b}\leq\frac{77}{3}<26.\eeqs
Since $\gcd(18,n)=\gcd(24,n)=1$, we infer that
$\frac{50}{3}<\frac{5n}{c}\leq17<\frac{5n}{b}<18$, $24<\frac{7n}{c}<25<\frac{7n}{b}<\frac{77}{3}<26$. Because $5\times7\times17<1000$, at least one of $14,17,25$ is co-prime to $n$.

{\it Subcase 3.6.} $r=5$. By Lemma 5.1, we infer that $n<1000$ with $u=\frac{11}{3}$ and $v=4$, contradiction.
\end{proof}

\begin{lemma}
If $k_1=3$, then $\ind(S)=1$.
\end{lemma}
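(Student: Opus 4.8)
The plan is to follow the same template that worked for $k_1=4,5,6$, but now with $k_1=3$, which forces $s\geq 3$ and hence (by Lemma 3.10) $n<4b$ once $s$ is large, while for small $s$ Lemma 5.1 will do the work. First I would record the base estimates: since $k_1=3$ we have $\lceil n/c\rceil=\lceil n/b\rceil$ and $3\leq k_1\leq b/a$, so $s=\lfloor b/a\rfloor\geq 3$; and by the assumption of this section, $\gcd(n,m_1)>1$ for every $m_1\in[\tfrac{3n}{c},\tfrac{3n}{b})$. I would split on the value of $m_1=\lceil n/c\rceil$. When $m_1\geq 10$ the cushion $n\geq m_1 b\geq 10b$ is generous, and (as in Lemma 4.5) one finds an integer $m$ coprime to $n$ in $[\tfrac{3n}{c},\tfrac{3n}{b})$ with $ma<n$, so Lemma 3.3(1) applies with $k=3$. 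So the real content is the small cases $m_1\in\{2,3\}$ (the values $m_1=4,\dots,9$ being restricted because $\gcd(n,m_1)>1$ forces $m_1$ to have a prime factor $5$ or $7$, handled like Lemmas 4.2 and 4.3).

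Next, for $1<\tfrac{n}{c}<\tfrac{n}{b}<2$ (i.e. $m_1$ would be forced by a different normalization) I would use Lemma 5.1 directly: with $u=1,v=2$ we get a contradiction with $n>1000$ unless $s$ is tiny, and $s\geq 3$ rules that out. The genuinely delicate range is $2<\tfrac{n}{c}<\tfrac{n}{b}<3$. Here I would write $6+r<\tfrac{3n}{c}<\tfrac{3n}{b}\leq 7+r$ for $r\in\{0,1,2\}$ (the endpoints $r=0$ and $r=2$ push $\tfrac{n}{c}$ near $2$ or $\tfrac{8}{3}$ and are killed by Lemma 5.1 or by $\gcd(n,m_1)=1$ clashing with $\gcd(n,m_1)>1$, exactly as in Subcases 2.1, 2.3 of Lemma 4.6), leaving a middle band to analyze. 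In that band I would multiply through by $4,5,6,7,\dots$ and locate an integer coprime to $n$; since $\gcd(n,6)=1$ and $n=p_1p_2p_3>1000$, at most finitely many small products of primes are excluded, so a product of two or three explicitly small primes is reached, and then — invoking the ``remark on the proof'' convention — one checks under $(A4)$ that $a$ and $b$ are over-determined (using $16\leq n/a$ type bounds from $b/a\geq k_1=3$ combined with $\tfrac{n}{b}$ ranges) to reach a contradiction, or else finds $m$ coprime to $n$ with $ma<n$ and applies Lemma 3.3(1). Whenever the geometry also places a coprime integer inside $[\tfrac{(2s-2t-1)n}{2b},\tfrac{(s-t)n}{b}]$, assumption $(B)$ gives an immediate contradiction.

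The main obstacle I expect is bookkeeping: with $k_1=3$ the cushion $n\geq 3b$ is the weakest of all the cases, so the crude bound $ma\leq (lm_1+\text{const})a<n$ is tighter and more sub-intervals $[\tfrac{jn}{c},\tfrac{jn}{b})$ have to be examined before a usable coprime integer appears; several residual configurations will pin $n$ down to a product of three small primes, each of which must be excluded by hand via the $(A2)$/$(A3)$/$(A4)$ divisibility constraints (though by the stated convention only $(A4)$ need be written out). A secondary subtlety is keeping track of when $\gcd(n,m_1)>1$ restricts $m_1$: for $m_1\in\{4,6,8,9\}$ the constraint forces a specific prime ($5$ or $7$ or both) into $n$, and these must be merged with the Lemma 4.2/4.3-style sub-case trees rather than treated afresh. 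Once all values of $r$ and all residual small-prime products are dispatched, Lemma 3.3(1) (or Lemma 3.4, or assumption $(B)$) closes each branch, giving $\mathrm{ind}(S)=1$, which completes the proof of Proposition 3.17 and hence of Theorem 2.3.
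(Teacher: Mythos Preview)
Your sketch conflates the setups of Propositions 3.16 and 3.17. In Section~5 the hypothesis is $\lceil n/c\rceil=\lceil n/b\rceil$, so there is \emph{no} integer in $[n/c,n/b)$; the first interval $[\tfrac{k n}{c},\tfrac{k n}{b})$ containing an integer is $k=k_1=3$, and the standing assumption at the top of Section~5 is precisely that every integer in $[\tfrac{3n}{c},\tfrac{3n}{b})$ is \emph{not} coprime to $n$. Hence your plan ``(as in Lemma~4.5) one finds an integer $m$ coprime to $n$ in $[\tfrac{3n}{c},\tfrac{3n}{b})$'' contradicts the hypothesis you yourself recorded two lines earlier. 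Likewise there is no assumption that $\gcd(n,\lceil n/c\rceil)>1$, so your elimination of $m_1\in\{4,\dots,9\}$ on that ground is unfounded. Your appeal to Lemma~5.1 with $u=1$ also fails: the hypothesis $u(k_1-1)>s+1$ becomes $2>s+1$, impossible since $s\geq k_1=3$ (and in any event $c<n/2$ forces $n/c>2$, so that region is empty).

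The paper's organization is different and is what you should adopt: since $k_1=3$ forces $\lceil \tfrac{2n}{c}\rceil=\lceil\tfrac{2n}{b}\rceil$, one writes $4+r<\tfrac{2n}{c}<\tfrac{2n}{b}\leq 5+r$ and then splits by $s\in\{3,4,5,6,7,8,9\}$, using Lemmas~3.6, 3.8, 3.10, 3.12 to bound $n/b$ (hence the admissible range of $r$) in each case. For large $r$ one invokes Lemma~5.1 with $u=\tfrac{4+r}{2}$ (so that $u(k_1-1)=4+r$ does exceed $s+1$) to force $n<1000$; for small $r$ one multiplies by $k=3,4,5,\ldots$ to locate an integer coprime to $n$ in $[\tfrac{kn}{c},\tfrac{kn}{b})$ and verifies $ma<n$ for Lemma~3.3(1), or finds such an integer in $[\tfrac{(2s-2t-1)n}{2b},\tfrac{(s-t)n}{b}]$ contradicting assumption~$(B)$. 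Only a handful of residual branches pin $n$ to an explicit product of three primes (e.g.\ $n=11\cdot5\cdot29$), and these are excluded by checking the $(A4)$ divisibility constraints on $a,b,c$. Your proposal contains fragments of this (the ``middle band'' discussion), but the overall decomposition by $m_1=\lceil n/c\rceil$ is the wrong axis for Section~5 and leads to the contradictions above.
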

\begin{proof}
We distinguish five cases.

{\bf Case 1.}  $s=3$.  Then $\frac{n}{b}<8$, and we have $4+r<\frac{2n}{c}<\frac{2n}{b}\leq5+r$ for some $r=0,1,2,\cdots,11$.

{\it Subcase 1.1.} $r\geq1$. We infer that $n<160$ with $u=\frac{5}{2}$ and $v=8$, contradiction.

{\it Subcase 1.2.} $r=0$. We have $8<\frac{4n}{c}<9<\frac{4n}{b}\leq10$, and $\gcd(9,n)=10$. Let $k=4$ and $m=9$, then $ma=9\times(c-b+1)\leq9\times(\frac{n-1}{2}-\frac{2n}{5}+1)=\frac{9n+45}{10}<n$, then $\ind(S)=1$ by Lemma 3.3(1).

{\bf Case 2.} $s=4$.  Then $\frac{n}{b}<6$, and we have $4+r<\frac{2n}{c}<\frac{2n}{b}\leq5+r$ for some $r=0,1,2,\cdots,7$.

{\it Subcase 2.1.} $r=0$. We have $8<\frac{4n}{c}<9<\frac{4n}{b}\leq10$, and $\gcd(n,9)=1$, $\ind(S)=1$.

{\it Subcase 2.2.} $r=1$. We have $\frac{15}{2}<\frac{3n}{c}<8<\frac{3n}{b}<9$, and $\gcd(n,8)=1$, $\ind(S)=1$.

{\it Subcase 2.3.} $r\geq2$. Then $3<\frac{n}{c}<\frac{n}{b}<6$, we infer that $n<180$, contradiction.

{\bf Case 3.} $s=5$.  Then $\frac{n}{b}<6$, and we have $4+r<\frac{2n}{c}<\frac{2n}{b}\leq5+r$ for some $r=0,1,2,\cdots,7$.

{\it Subcase 3.1.} $r=0$. We have $8<\frac{4n}{c}<9<\frac{4n}{b}\leq10$, and $\gcd(n,9)=1$, $\ind(S)=1$.

{\it Subcase 3.2.} $r=1$. We have $\frac{15}{2}<\frac{3n}{c}<8<\frac{3n}{b}\leq9$, and $\gcd(n,8)=1$, $\ind(S)=1$.

{\it Subcase 3.3.} $r=2$. We have $9<\frac{3n}{c}<10<\frac{3n}{b}\leq\frac{21}{2}$, $12<\frac{4n}{c}\leq13<\frac{4n}{b}\leq14$, $15<\frac{5n}{c}<\frac{5n}{b}\leq\frac{35}{2}$.  Since $\gcd(n,16)=1$, we infer that $16<\frac{5n}{c}\leq17<\frac{5n}{b}\leq\frac{35}{2}$ and $\frac{9n}{2b}<\frac{63}{4}<16<\frac{5n}{b}$, $\ind(S)=1$.

{\it Subcase 3.4.} $r\geq3$. Then $\frac{7}{2}<\frac{n}{c}<\frac{n}{b}<6$, we infer that $n<294$, contradiction.

{\bf Case 4.} $s=6,7,8$.  Then $\frac{n}{b}<4$, and we have $4+r<\frac{2n}{c}<\frac{2n}{b}\leq5+r$ for some $r=0,1,2,3$.

{\it Subcase 4.1.} $r=0$. We have $8<\frac{4n}{c}<9<\frac{4n}{b}\leq10$, and $\gcd(n,9)=1$, $\ind(S)=1$.

{\it Subcase 4.2.} $r=1$. We have $\frac{15}{2}<\frac{3n}{c}<8<\frac{3n}{b}\leq9$, and $\gcd(n,8)=1$, $\ind(S)=1$.

{\it Subcase 4.3.} $r=2$. Same to {\it Subcase 3.3.}

{\it Subcase 4.4.} $r=3$. We have $\frac{35}{2}<\frac{5n}{c}<\frac{5n}{b}\leq20$. Since $\gcd(n,18)=1$, we infer that $18<\frac{5n}{c}<19<\frac{5n}{b}<20$. Then $\frac{9n}{2b}<18<\frac{5n}{b}$, contradiction to $(B)$ with $t=1,2,3$ for $s=6,7,8$, respectively.

{\bf Case 5.} $s=9$.  Then $\frac{n}{b}<4$, and we have $4+r<\frac{2n}{c}<\frac{2n}{b}\leq5+r$ for some $r=0,1,2,3$.

For $r=0,1,2$. Same to Case 4. Then let $r=3$, and $7<\frac{2n}{c}<\frac{2n}{b}<8$.

{\it Subcase 5.1.} $21<\frac{6n}{c}\leq22<23<\frac{6n}{b}<24$, then $\frac{49}{2}<\frac{7n}{c}<25<26<\frac{7n}{b}<28$, at least one of $22,23,25,26$ is co-prime to $n$, hence $\ind(S)=1$.

{\it Subcase 5.2.} $21<\frac{6n}{c}\leq22<\frac{6n}{b}<23$, $\frac{49}{2}<\frac{7n}{c}<25<26<\frac{7n}{b}<\frac{161}{6}$. We infer that at least one of $22,25,26$ is co-prime to $n$ and $\ind(S)$. Otherwise, $n=11\times5\times13<1000$, contradiction.

{\it Subcase 5.3.} $21<\frac{6n}{c}\leq22<\frac{6n}{b}<23$, $\frac{49}{2}<\frac{7n}{c}<25<\frac{7n}{b}<26$. Then $28<\frac{8n}{c}<29<\frac{8n}{b}<\frac{208}{7}$, $\frac{63}{2}<\frac{9n}{c}<29<\frac{9n}{b}<\frac{234}{7}$. We infer that $n=11\times5\times29$. Otherwise, at least one of $22,25,29$ is co-prime to $n$ and $\ind(S)=1$.  So $29\geq\frac{n}{a}=\frac{n}{b}\times\frac{b}{a}>2\times9=18>11$. Since $\frac{29}{2}<18$, we have $a=11\times5$. Then
$b=j\times 29\times5 (j<6)$ or $b=j\times 29\times11 (j<3)$ and $9\leq\frac{b}{a}<10$. However,
$\frac{3\times 29\times5}{11\times5}=\frac{97}{11}<9$, $\frac{1\times 29\times11}{11\times5}=\frac{29}{5}<9$, we have $b\in\{580,725,638\}$. If $b=638$, then $c=a+b-1=692$, which contradicts to $\gcd(n,c)=5\times29=145$.
If $b\in\{580,725\}$, then $c\in\{634,779\}$, which contradicts to $\gcd(n,c)=29\times11=319$.

\end{proof}

\begin{lemma}
If $k_1=2$, $4<\frac{2n}{c}\leq5<\frac{2n}{b}<6$ and $a\leq \frac{b}{2}$, then $\ind(S)=1$.
\end{lemma}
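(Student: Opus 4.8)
\emph{Sketch and plan.} The plan is to follow the scheme of the proof of Lemma 4.2, tightened to the present (much narrower) window.

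\emph{Initial reduction.} First I would observe that if some integer $m$ with $\frac{2n}{c}\le m\le\frac{2n}{b}$ is coprime to $n$, then — since $5<\frac{2n}{b}<6$ shows $\frac{2n}{b}\notin\mathbb Z$, so $m<\frac{2n}{b}$ — the hypothesis $a\le\frac b2$ gives $ma<\frac{2n}{b}\cdot a\le\frac{2n}{b}\cdot\frac b2=n$, and Lemma 3.3(1) (with $k=k_1=2\le b$) yields $\ind(S)=1$. Hence we may assume no integer in $[\frac{2n}{c},\frac{2n}{b}]$ is coprime to $n$; but $4<\frac{2n}{c}\le 5<\frac{2n}{b}<6$ forces this interval to contain exactly the integer $5$, so $5\mid n$, and $n=5p_2p_3$ with primes $7\le p_2<p_3$ and $p_2p_3>200$. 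I would also record that $\lceil\frac nc\rceil=\lceil\frac nb\rceil=3$, so $\frac52<\frac nb<3$, i.e. $\frac52 b<n<3b$, and $2<\frac nc\le\frac52$.

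\emph{Window sweep.} Next I would examine the intervals $I_k:=\big[\frac{kn}{c},\frac{kn}{b}\big]$ for $k=3,4,5,\dots$, splitting into finitely many subcases according to which integers the first few such intervals contain. In a subcase where $\big\lfloor\frac{kn}{c}\big\rfloor\ge L$ and $\frac{kn}{b}<U$, the inequalities $Lc\le kn<Ub$ give $L(a-1)<(U-L)b$, so for any integer $m\in I_k$,
\[
ma=\frac{ma}{L(a-1)}\cdot L(a-1)<\frac{m}{L}\cdot\frac{a}{a-1}\cdot(U-L)b\le\frac{35\,m(U-L)}{34\,L}\,b ,
\]
using $a\ge 35$; since $n>\frac52 b$, this gives $ma<n$ as soon as $\frac{m(U-L)}{L}\le\frac{17}{7}$, and Lemma 3.3(1) then closes the subcase — provided $\gcd(m,n)=1$, which, as $\gcd(n,6)=1$, fails only if $5\mid m$, $p_2\mid m$ or $p_3\mid m$. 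When the interval offers no usable $m$ I would pass to a larger $k$ (often a dilate $I_{2k}$ or $I_{3k}$ of the current window), or instead apply Lemma 3.3(2) by exhibiting $M\in[1,\frac n2]$ coprime to $n$ with two of $|Ma|_n>\frac n2$, $|Mb|_n>\frac n2$, $|Mc|_n<\frac n2$, or invoke assumption $(B)$. The residual configurations turn out to be so tightly constrained that, combining the $\gcd$-pattern dictated by $(A2)$, $(A3)$ or $(A4)$ with $\frac52 b<n<3b$ and $a\ge 35$, one either forces $\frac ba>9$ (so $s>9$, against the standing assumption) or pins $n$ down to an explicit product of three primes which is then excluded, either because it is $<1000$ or by matching its $\gcd$-constraints against the above inequalities. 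As usual in this section, the pin-down verification only needs to be carried out under $(A4)$.

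\emph{Main difficulty.} The crux is that the hypothesis confines $\frac nb$ to the short band $(\frac52,3)$, so $n$ exceeds $\frac52 b$ only by a bounded factor and the estimate for $ma$ has almost no slack: one essentially needs $\frac{m(U-L)}{L}\le\frac{17}{7}$, which fails for many of the short windows whose smallest coprime integer is large relative to the left endpoint. The bulk of the proof is therefore the bookkeeping needed to enumerate enough windows $I_k$ (and enough of the finitely many positions of $\frac{kn}{c},\frac{kn}{b}$ relative to $\mathbb Z$) so that in every surviving case a usable $m$ or $M$ appears, or $(B)$ applies; and, in the genuinely tight cases, the careful interplay of the divisibility pattern $\{\gcd(c,n),\gcd(b,n),\gcd(a,n)\}$ with $a\ge 35$, $s\le 9$ and $\frac52 b<n<3b$ to reach a contradiction. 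Making sure the sweep over $k$ terminates, and that Lemmas 3.3, 3.4 and assumption $(B)$ are each invoked where they bite, is the delicate part.
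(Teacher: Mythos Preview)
Your plan is essentially the paper's approach: reduce to $5\mid n$, then sweep the intervals $I_k=[\frac{kn}{c},\frac{kn}{b}]$ looking for an $m$ coprime to $n$ with $ma<n$ (Lemma~3.3(1)), and fall back on Lemma~3.3(2) with a suitable $M$ in the tight spots. So the strategy is right.

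The paper's execution, however, is organised rather differently from your ``systematic window sweep''. It branches immediately on the position of $I_3$ relative to $\{7,8\}$ into three cases:
\[
7<\tfrac{3n}{c}<8<\tfrac{3n}{b}<9,\qquad 6<\tfrac{3n}{c}\le 7<\tfrac{3n}{b}<8,\qquad 6<\tfrac{3n}{c}\le 7<8<\tfrac{3n}{b}<9.
\]
The first is trivial ($m=8$, $k=3$). In the other two your inequality $\frac{m(U-L)}{L}\le\frac{17}{7}$ is far too weak, and the paper does \emph{not} resolve them by passing to larger $k$ or by invoking $(B)$ or $s\le 9$. Instead it pins $a$ into a short interval (e.g.\ $\frac n9<a<\frac n8$ or $\frac n7<a<\frac n6$) by first disposing of the endpoints via Lemma~3.3(1), and then produces an explicit $M\in\{11,12,13,17,23\}$ for Lemma~3.3(2) by computing $|Ma|_n,|Mb|_n,|Mc|_n$ from the resulting bounds on $a,b,c$; the choice of $M$ depends on which of $7,11$ divide $n$. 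Your sketch gestures at Lemma~3.3(2) but gives no hint how to find these $M$, and your proposed fallback tools ($(B)$, $s\le 9$, pinning $n$ to an explicit triple) are not what actually closes the argument here. If you flesh out your plan, this is where the real work lies.
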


\begin{proof}
Note that $m_1=5$ and $b\geq2a\geq70$. Since $\gcd(n,m_1)>1$ we have $5|n$.  By the definition of $k_1$, we conclude that $[\frac{k_2n}{c},\frac{k_2n}{b})$ contains at least one integer for each $k_2\geq k_1=2$. Note that $6<\frac{3n}{c}<\frac{3n}{b}<9$. We distinguish three cases.

{\bf Case 1.}   $7<\frac{3n}{c}<8<\frac{3n}{b}<9$. Then $\frac{n}{3}<b<\frac{3n}{8}\leq c<\frac{3n}{7}$.

Since $\gcd(n,8)=1$. Let $m=8$ and $k=3(<70\leq b)$. Then $ma=m(c-b+1)\leq8\times(\frac{3n-1}{7}-\frac{n+1}{3}+1)<n$, and we are done.

{\bf Case 2.}   $6<\frac{3n}{c}\leq7<\frac{3n}{b}<8$. Then $\frac{3n}{8}<b<\frac{2n}{5}<\frac{3n}{7}\leq c<\frac{n}{2}$.

If $\gcd(n,7)=1$, then let $m=7$ and $k=3$. Since $\frac{3n}{8}<b<c<\frac{n}{2}$, $ma=m(c-b+1)\leq7\times(\frac{n-1}{2}-\frac{3n+1}{8}+1)<n$, and we are done.

Next assume that $7|n$. Note that $8<\frac{4n}{c}\leq10<\frac{4n}{b}<12$.

If $9\not\in[\frac{4n}{c},\frac{4n}{b})$, then $\frac{4n}{c}\geq9$. Let $m=12$ and $k=5$. Since $\frac{5n}{c}\leq 7\times\frac{5}{3}<12<10\times\frac{5}{4}<\frac{5}{4}\times\frac{4n}{b}=\frac{5n}{b}$ and $\frac{3n}{8}<b<c<\frac{4n}{9}$, we have
$ma=m(c-b+1)\leq12\times(\frac{4n-1}{9}-\frac{3n+1}{8}+1)<n$, and we are done.

If $9\in[\frac{4n}{c},\frac{4n}{b})$, then $\frac{4n}{c}\geq9$ and thus $\frac{3n}{8}<b<\frac{2n}{5}<\frac{4n}{9}<c<\frac{4n}{9}$. So
\beqs 8n+\frac{n}{2}<\frac{69n}{8}<23b<\frac{46n}{5}<9n+\frac{n}{2}<10n<\frac{92n}{9}<23c<\frac{23n}{2}=11n+\frac{n}{2}.\eeqs

Note that $a=c-b+1\leq\frac{n+3}{8}$. If $a>\frac{n}{8}$, let $M=12$. We obtain that $|Ma|_n>\frac{n}{2}$ and $|Mb|_n>\frac{n}{2}$, and we are done. If $a<\frac{n}{9}$, let $m=9$ and $k=4$, we have $ma<n$, and we are done. Then $\frac{n}{9}<a<\frac{n}{8}$, and thus
\beqs 2n+\frac{n}{2}<\frac{23n}{9}<23a<\frac{23n}{8}<3n.\eeqs

If $23c\leq11n$, then $\frac{n}{9}<a=c-b+1\leq\frac{19n+57}{184}$, which implies that $n<40$, contradiction. So we must have $23c>11n$. Similarly, we can show that $23b<9n$. Moreover, we have $\gcd(n,23)=1$, otherwise $n=5\times7\times23=805<1000$, contradiction. Then $|23|_n+|23c|_n+|23(n-b)|_n+|23(n-a)|_n=n$ and we are done.

{\bf Case 3.}   $6<\frac{3n}{c}\leq7<8<\frac{3n}{b}<9$. Then  $\frac{n}{3}<b<\frac{3n}{8}<\frac{3n}{7}\leq c<\frac{n}{2}$.

Note that $a=c-b+1\leq\frac{n+1}{6}$. If $a>\frac{n}{6}$, we have $n<6a\leq n+1$ implies that $6a=n+1$, and $\gcd(n,n+1)=\gcd(n,6a)=\gcd(n,a)>1$, contradiction. Then $a<\frac{n}{6}$.

{\it Subcase 3.1.} $11|n$. Then $\gcd(n,7)=1$,  $\gcd(n,13)=1$ and $\gcd(n,17)=1$. Otherwise, $n\leq 5\times11\times17=935<1000$, contradiction.

We may assume that $a>\frac{n}{7}$. Otherwise, we can let $m=7$ and $k=3$, we have $ma<n$, so the lemma follows from Lemma 3.3(1). Then $\frac{3n}{2}<\frac{13n}{7}<13a<\frac{13n}{6}<\frac{5n}{2}<4n<\frac{13n}{3}<13b<\frac{39n}{8}<5n<\frac{11n}{2}<\frac{39n}{7}<13c<\frac{13}{2}$.

If $13c<6n$, then $\frac{n}{7}<a=c-b+1\leq\frac{5n+23}{39}$, so $n<41$, contradiction. Hence we must have that $13c>6n$, and then $|13c|_n<\frac{n}{2}$. If $13a<2n$ or $13b>\frac{9n}{2}$, then $|13a|_n>\frac{n}{2}$ or $|13b|_n>\frac{n}{2}$. Since $\gcd(n,13)=1$, the lemma follows from Lemma 3.3(2) with $M=13$.

Next assume that $13a>2n$ and $13b<\frac{9n}{2}$. Then $\frac{2n}{13}<a<b<\frac{9n}{26}$. Therefore,
\beqs \frac{5n}{2}<\frac{34n}{13}<17a<\frac{17n}{6}<3n<\frac{11n}{2}<\frac{17n}{3}<17b<\frac{153n}{26}<6n.\eeqs
We infer that $|17a|_n>\frac{n}{2}$ and $|17b|_n>\frac{n}{2}$. Since $\gcd(n,17)=1$, the lemma follows from Lemma 3.3(2) with $M=17$.

{\it Subcase 3.2.} $7|n$. Then $\gcd(n,11)=1$ and $\gcd(n,13)=1$.

As in {\it Subcase 3.1.} We may assume that $a>\frac{n}{8}$, and by a similar argument, we can complete the proof with $M=11$ or $M=13$.

{\it Subcase 3.3.} $\gcd(n,7)=\gcd(n,11)=1$.  See the proof of Subcase 3.1 of Lemma 3.10 in \cite{LP}.
\end{proof}

\begin{lemma}
If $k_1=2$, then $\ind(S)=1$.
\end{lemma}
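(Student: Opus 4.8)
\emph{The plan} is to run the same kind of layered argument used for Proposition 3.16 (Lemmas 4.2--4.4): isolate the arithmetic shape forced by $k_1=2$, clear a short explicit list of ``small'' configurations by hand, and cover all the rest by one uniform estimate. \emph{Setup.} Since $k_1=2\le b/a$ we have $a\le b/2$, so $s=\lfloor b/a\rfloor\ge2$; also $s\le9$ (Lemma 3.14). Put $m_0=\lceil n/c\rceil=\lceil n/b\rceil$; then $m_0-1<\tfrac nc\le\tfrac nb\le m_0$, and $c<n/2$ gives $m_0\ge3$. The condition $k_1=2$ says $\lceil 2n/c\rceil\ne\lceil 2n/b\rceil$, i.e. $[\tfrac{2n}c,\tfrac{2n}b)$ contains an integer; since $2m_0-2<\tfrac{2n}c\le\tfrac{2n}b\le 2m_0$, that integer can only be $m_1:=2m_0-1$, and it is the unique one, so $m_1-1<\tfrac{2n}c\le m_1<\tfrac{2n}b\le m_1+1$ and in particular $n/c\le m_0-\tfrac12<n/b$. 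The standing hypothesis of this section (applicable because $a\le b/k_1$) gives $\gcd(n,m_1)>1$; as $m_1$ is odd and $\gcd(n,6)=1$, the value $m_1=9$ is excluded, while $m_1\in\{5,7,11,13,15\}$ forces $5\mid n$, $7\mid n$, $11\mid n$, $13\mid n$, $5\mid n$ respectively.

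\emph{Small $m_0$.} The case $m_1=5$, which reads $4<\tfrac{2n}c\le5<\tfrac{2n}b<6$, is exactly Lemma 5.7. For the remaining small values --- $m_1=7$, together with whichever of $11,13,15$ the estimate below does not already cover --- I would argue as in the proof of Lemma 4.3 and of the cases of Lemmas 5.5--5.6: enumerate the possible positions of $\tfrac{jn}c$ and $\tfrac{jn}b$ for $j=3,4,5,\dots$, and in each subcase either produce an integer $m$ coprime to $n$ with $m\in[\tfrac{kn}c,\tfrac{kn}b)$, $1\le k\le b$ and $ma<n$ (Lemma 3.3(1)), or with $a\le\tfrac{kn}b$ (Lemma 3.4); or deduce $s\ge10$, contradicting $s\le9$; or force $n=p_1p_2p_3$ to be a product of three small explicit primes and discard it against $n>1000$ together with $a\ge35$, $k_1\le b/a$, $s\le9$ and the structural constraints $(A2)$--$(A4)$ on $(a,b,c)$ --- exactly in the manner of Subcase 3.2 of the proof of Lemma 5.5 and Subcase 5.3 of the proof of Lemma 5.6.

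\emph{Large $m_0$.} For $m_0$ above the threshold of the previous step (where $n>(m_0-1)b$ with $m_0-1$ already sizeable), let $l$ be the least positive integer for which $[\tfrac{ln}c,\tfrac{ln}b)$ contains at least four integers; since that interval contains a single integer for $l=2$, one has $l\ge3$, and, as in Lemma 4.4, $l<b$ and the interval contains at most six integers. Set $lm_0-j_0=\lceil ln/c\rceil-1$, so $lm_0-j_0<\tfrac{ln}c$ and $\tfrac{ln}b\le lm_0$; the four smallest integers in the interval are $lm_0-j_0+1,\dots,lm_0-j_0+4$, and since $n=p_1p_2p_3$ one of them, say $m$, is coprime to $n$, whence $m\le lm_0+3$. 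From $(lm_0-j_0)c<ln\le lm_0 b$ and $c=a+b-1$ we get $(lm_0-j_0)(a-1)<j_0b$, and a routine estimate in the style of Lemma 4.4, using $a\ge35$ and the lower bound on $m_0$, then gives
\[
ma\le(lm_0+3)a=\frac{lm_0+3}{lm_0-j_0}\cdot\frac{a}{a-1}\cdot(lm_0-j_0)(a-1)<(m_0-1)b<n,
\]
so Proposition 3.1 follows from Lemma 3.3(1) with $k=l$.

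\emph{The main obstacle.} The genuinely laborious step is the ``small $m_0$'' case: it is not governed by one inequality, and for $m_1=7$ (and whichever of $11,13,15$ are left to it) the subcase tree is long, with several branches closing only after $n$ has been pinned down to a specific product of three small primes that must then be eliminated by $n>1000$ and the constraints $(A2)$--$(A4)$ together with $a\ge35$, $k_1\le b/a$ and $s\le9$; making that enumeration exhaustive is the crux. A secondary point is to fix honestly the threshold on $m_0$ beyond which the estimate of the last step is valid --- the constants are tight, so one may have to control $j_0$ and $l$ more carefully, or enlarge the finite list of $m_1$ treated by hand, to make it go through.
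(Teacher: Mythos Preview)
Your setup is correct: from $k_1=2\le b/a$ and $\lceil n/c\rceil=\lceil n/b\rceil=m_0$ you get $m_0\ge3$, the unique integer in $[2n/c,2n/b)$ is $m_1=2m_0-1$, and the standing hypothesis $\gcd(n,m_1)>1$ together with $\gcd(n,6)=1$ rules out $m_1=9$ and leaves $m_1\in\{5,7,11,13,15\}$. The identification of $m_1=5$ with Lemma~5.7 is right.

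The genuine gap is your ``large $m_0$'' argument. Because $s\ge k_1=2$, Lemma~3.6 already forces $n/(2b)<4$, hence $m_0\le8$; so there is no ``large $m_0$'' regime beyond the five values you listed. Worse, your displayed estimate cannot succeed where it would be needed: since $\ell n/b\le \ell m_0$, the number of integers in $[\ell n/c,\ell n/b)$ is at most $j_0-1$, so four integers force $j_0\ge5$. Your target inequality then reads $(1+4/(\ell m_0-j_0))\cdot\tfrac{35}{34}\cdot j_0<m_0-1$, and with $j_0\ge5$ this is impossible for $m_0=6$ (right side $=5$) and fails for $m_0=7$ once $j_0\ge6$. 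For $m_0=8$ the bound can hold, but that case is already vacuous. So the ``uniform'' step buys nothing, and the whole proof collapses to the finite case analysis you defer.

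The paper organises that analysis quite differently. It parametrises by the pair $(s,r)$ with $r=m_0-3$, and instead of a Lemma~4.4--style estimate it invokes Lemma~5.1: whenever $u=m_0-1>s+1$ one gets an explicit bound $n<\tfrac{uv(s+1)}{u-(s+1)}$, which for the larger $r$ in each $s$--block drops below $1000$. The remaining subcases (mostly $r=1$, i.e.\ $m_1=7$, with a few $r=3$) are closed not only by locating a coprime $m$ in some $[kn/c,kn/b)$ but crucially by contradiction with assumption~$(B)$ --- showing that some integer coprime to $n$ lies in $\Omega=\bigcup_t[(2s-2t-1)n/(2b),(s-t)n/b]$ --- and, when $n$ is pinned to a specific $p_1p_2p_3$, by the divisibility constraints of $(A2)$--$(A4)$. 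Your sketch does mention the latter route, but you do not invoke Lemma~5.1 or the $(B)$--contradictions, which are the two devices that actually trim the tree in the paper's proof.
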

\begin{proof}

Since $k_1=2$, we have $\lceil\frac{n}{c}\rceil=\lceil\frac{n}{b}\rceil$ and  have $2+r<\frac{n}{c}<\frac{n}{b}\leq3+r$ for some $r=0,1,2,3,4,5$. If $t=2$, then $8<\frac{2n}{c}<9<\frac{2n}{b}<10$ and $\gcd(n,9)=1$, contradiction. By Lemma 4.14, we only need to prove it for $t\not=0,2$. Particularly, when $s\geq6$, $\frac{n}{b}<4$, we only need consider $r=1$.  We distinguish six cases.

{\bf Case 1.}  $s=2$.

{\it Subcase 1.1.} $r=1$. Then $\frac{3n}{2b}<6<\frac{2n}{b}$, we have $6\in[\frac{3n}{2b},\frac{2n}{b}]$ and $\gcd(n,6)=1$, contradiction.

{\it Subcase 1.2.} $r\geq3$. By Lemma 5.1, we infer that $n<60$ with $u=5,v=8$, contradiction.

{\bf Case 2.}  $s=3$.

{\it Subcase 2.1.} $r=1$. We infer that $6<\frac{2n}{c}\leq7<\frac{2n}{b}<8$ and $7|n$. Then
$9<\frac{3n}{c}<10<\frac{3n}{b}\leq11$, or $10<\frac{3n}{c}\leq11<\frac{3n}{b}<12$. Otherwise, $n=5\times7\times11<1000$, contradiction.

Then the proof is very similar to that in \cite{LP}.

{\it Subcase 2.2.} $r\geq3$. By Lemma 5.1, we infer that $n<320$ with $u=5,v=8$, contradiction.

{\bf Case 3.} $s=4$.  Then $\frac{n}{b}<6$, and $t\leq3$.

{\it Subcase 3.1.} $r=1$. We infer that $6<\frac{2n}{c}\leq7<\frac{2n}{b}<8$ and $7|n$. Then
$9<\frac{3n}{c}<10<\frac{3n}{b}\leq11$, or $10<\frac{3n}{c}\leq11<\frac{3n}{b}<12$. Otherwise, $n=5\times7\times11<1000$, contradiction.

If $9<\frac{3n}{c}<10<\frac{3n}{b}\leq11$, then $\gcd(n,13)=1$, otherwise $n=5\times7\times13<1000$. Hence we infer that $13<\frac{4n}{c}\leq14<\frac{4n}{b}\leq\frac{44}{3}$, and $\frac{4n}{b}>14>13>\frac{77}{6}>\frac{7n}{2b}$, contradiction.

If $10<\frac{3n}{c}\leq11<\frac{3n}{b}<12$, then $\gcd(n,5)=1$, otherwise $n=5\times7\times11<1000$. Hence we infer that $\frac{3n}{b}>11>10>\frac{5n}{2b}$, contradiction.

{\it Subcase 3.2.}  $r=3$. Then $10<\frac{2n}{c}\leq11<\frac{2n}{b}<12$, and $\frac{3n}{b}>\frac{33}{2}>16>15>\frac{5n}{2b}$. Then  $\gcd(n,16)=1$ and $16\in[\frac{5n}{2b},\frac{3n}{b}]$, contradiction.

{\bf Case 4.} $s=5$.  Then $\frac{n}{b}<6$, and $t\leq3$.

{\it Subcase 4.1.} $r=1$. We have $6<\frac{2n}{c}\leq7<\frac{2n}{b}<8$, and $7|n$.
If $9<\frac{3n}{c}<10<\frac{3n}{b}\leq11$, the proof is similar to {\it Subcase 3.1.}

If $10<\frac{3n}{c}\leq11<\frac{3n}{b}<12$, then $\gcd(n,5)=1$, otherwise $n=5\times7\times11<1000$. We infer that $\frac{40}{3}<\frac{4n}{c}\leq14<\frac{4n}{b}<15$ and $n=7\times11\times 17$. Moreover, $\frac{n}{a}=\frac{n}{b}\times\frac{b}{a}>10$ implies that $a=7\times11$ or $a=7\times17$.

If $a=7\times11$, then $b=j\times11\times17$ or $b=j\times7\times17$ for some $j$. By $s=5$, we have $\frac{b}{a}\in[5,6)$, and we can't find such $j$.

If $a=7\times17$, then $b=j\times11\times17$ or $b=j\times11\times7$ for some $j$. We infer that $b=9\times11\times7=693$ and $c=a+b-1=811$, which contradicts  to $\gcd(n,c)=11\times17$.

{\it Subcase 4.2.} $r=3$. Then $10<\frac{2n}{c}\leq11<\frac{2n}{b}<12$. Since $\gcd(n,16)=1$, We infer that  $16<\frac{3n}{c}\leq17<\frac{3n}{b}<18$ and we can assume that $11\times17|n$. Then $\frac{4n}{b}>\frac{17\times4}{3}>22>21>\frac{7n}{2b}$ and $n=7\times11\times17$. Similar to {\it Subcase 4.1.}, it is impossible.

{\bf Case 5.} $s=6$.

$r=1$. We have $6<\frac{2n}{c}\leq7<\frac{2n}{b}<8$, and $7|n$.
If $9<\frac{3n}{c}<10<\frac{3n}{b}\leq11$, the proof is similar to {\it Subcase 3.1.}

If $10<\frac{3n}{c}\leq11<\frac{3n}{b}<12$, similar to {\it Subcase 3.1.}, $n=7\times11\times 17$. More over, $\frac{n}{a}=\frac{n}{b}\times\frac{b}{a}>12$ implies that $a=7\times11$. Then $b=j\times11\times17$ or $b=j\times7\times17$ for some $j$. By $s=6$, $\frac{b}{a}\in[6,7)$, we infer that $b=4\times7\times17$, and $c=a+b-1=552$, which contradicts  to $\gcd(n,c)=11\times17$.

{\bf Case 6.} $s=7,8,9$.

$r=1$. We infer that $6<\frac{2n}{c}\leq7<\frac{2n}{b}<8$ and $7|n$. Then
$9<\frac{3n}{c}<10<\frac{3n}{b}\leq11$, or $10<\frac{3n}{c}\leq11<\frac{3n}{b}<12$. Otherwise, $n=5\times7\times11<1000$, contradiction.

{\it Subcase 6.1.} $9<\frac{3n}{c}<10<\frac{3n}{b}\leq11$.  Then $\gcd(n,13)=1$, otherwise $n=5\times7\times13<1000$. Hence we infer that $13<\frac{4n}{c}\leq14<\frac{4n}{b}\leq\frac{44}{3}$, and $\frac{7n}{b}>24>\frac{13n}{2b}$, contradiction.

{\it Subcase 6.2.} $10<\frac{3n}{c}\leq11<\frac{3n}{b}<12$.

We have $\frac{50}{3}<\frac{5n}{c}<14<\frac{5n}{b}<20$. If $\frac{5n}{c}<17<\frac{5n}{b}$, then $n=7\times11\times17$. More over, $\frac{n}{a}=\frac{n}{b}\times\frac{b}{a}>14$ and $\frac{19}{2}<14$ implies that $a=7\times11$. Then $b=j\times11\times17(j<4)$ or $b=j\times7\times17(j<6)$ for some $j$. We can't find suitable $j$ for $s=8,9$. When $s=7$, we have
$b=3\times11\times17$ or $b=5\times7\times17$.  If $b=3\times11\times17$, $c=a+b-1=637=7\times91$, which contradicts to $\gcd(n,c)=7\times17$. If $b=5\times7\times17$, $c=a+b-1=594=11\times54$, which contradicts to $\gcd(n,c)=11\times17$.

If $\frac{5n}{c}<18<\frac{5n}{b}$, then $\ind(S)=1$.

If $\frac{5n}{c}\leq19<\frac{5n}{b}$, then $n=7\times11\times19$.

More over, $\frac{n}{a}=\frac{n}{b}\times\frac{b}{a}>14$  and $\frac{19}{2}<14$ implies that $a=7\times11$. Then $b=j\times11\times19(j<4)$ or $b=j\times7\times19(j<6)$ for some $j$. We can't find suitable $j$ for $s=7,9$. When $s=8$, we have
$b=3\times11\times19$ or $b=5\times7\times19$.  If $b=3\times11\times19$, $c=a+b-1=703=19\times37$, which contradicts to $\gcd(n,c)=7\times19$. If $b=5\times7\times19$, $c=a+b-1=741=19\times39$, which contradicts to $\gcd(n,c)=11\times19$.

\end{proof}

{\noindent\bf Acknowledgements}

The author is thankful to the referees for valuable suggestions and to prof. Yuanlin Li and prof. Jiangtao Peng for their useful discussion and valuable comments.

\vskip30pt
\def\refname{\centerline{\bf REFERENCES}}


\begin{thebibliography}{15}
\bibitem[1]{CFS} S.T. Chapman, M. Freeze, and W.W Smith, {\sl Minimal zero sequences and the strong Davenport constant},
Discrete Math. 203(1999), 271-277.
\bibitem[2]{CS}  S.T. Chapman, and W.W Smith, {\sl A characterization of minimal zero-sequences of index one in finite cyclic groups}, Integers 5(1)(2005), Paper A27, 5p.
\bibitem[3]{Gao} W. Gao , {\sl Zero sums in finite cyclic groups}, Integers 0 (2000), Paper A14, 9p.
\bibitem[4]{GaoGer} W. Gao and A. Geroldinger, {\sl On products of k atoms}, Monatsh. Math. 156 (2009), 141-157.
\bibitem[5]{GLPPW} W. Gao, Y. Li, J. Peng, P. Plyley and G. Wang {\sl On the index of sequences over cyclic groups} (English), Acta Arith. 148, No. 2, (2011) 119-134 .
\bibitem[6]{Ger1} A. Geroldinger, {\sl On non-unique factorizations into irreducible elements. II}, Number Theory, Vol II Budapest 1987, Colloquia Mathematica Societatis Janos Bolyai, vol. 51, North Holland, 1990, 723-757.
\bibitem[7]{Ger2} A. Geroldinger, {\sl Additive group theory and non-unique factorizations}, Combinatorial Number Theory and Additive Group Theory (A. Geroldinger and I. Ruzsa, eds.), Advanced Courses in Mathematics CRM Barcelona, Birkh\"{a}user, 2009, pp. 1-86.
\bibitem[8]{GerH} A. Geroldinger and F. Halter-Koch, {\sl Non-Unique Factorizations}. Algebraic, Combinatorial and Analytic Theory, Pure and Applied Mathematics, Vol. 278, Chapman \& Hall/CRC, 2006.
\bibitem[9]{KL} D. Kleitman and P. Lemke, {\sl An addition theorem on the integers modulo n}, J. Number Theory 31(1989), 335-345.
\bibitem[10]{LP} Y. Li and J. Peng, {\sl Minimal zero-sum sequences of length five over finite cyclic groups}, Ars Combinatoria, to appear.
\bibitem[11]{LP} Y. Li and J. Peng, {\sl Minimal zero-sum sequences of length four over finite cyclic groups II}, International Journal of Number Theory, to appear.
\bibitem[12]{LPYZ} Y. Li, C. Plyley, P. Yuan and X. Zeng, {\sl Minimal zero sum sequences of length four over finite cyclic groups}, Journal of Number Theory. 130 (2010), 2033-2048.
\bibitem[13]{P} V. Ponomarenko, {\sl Minimal zero sequences of finite cyclic groups}, Integers 4(2004), Paper A24, 6p.
\bibitem[14]{SC} S. Savchev and F. Chen, {\sl Long zero-free sequences in finite cyclic groups}, Discrete Math. 307(2007), 2671-2679.
\bibitem[15]{XY} X. Xia and P. Yuan, {\sl Indexes of insplitable minimal zero-sum sequences of length $l(C_n)-1$}, Discrete Math. 310(2010), 1127-1133.
\bibitem[16]{Y} P. Yuan, {\sl On the index of minimal zero-sum sequences over finite cyclic groups}, J. Combin. Theory Ser. A114(2007), 1545-1551.
\bibitem[17]{YZ} P. Yuan and X. Zeng, {\sl Indexes of long zero-sum free sequences over cyclic groups}, Eur. J. Comb. 32(2011), 1213-1221.
\bibitem[18]{Gr} D. J. Grynkiewicz, {\sl Structural Additive Theory}, Developments in Mathematics, to appear, Springer, 2013.


\end{thebibliography}
\end{document}